\documentclass[12pt]{amsart}
\usepackage[margin=1.0in]{geometry} 
\usepackage{esint,amsthm,amsmath,amssymb,amsfonts,graphicx,color,comment,enumerate,psfrag,caption,bbm,bm,hyperref,enumitem}
\usepackage{mathtools}

\usepackage{tikz,pgf}
\usepackage{pgfplots}
\usetikzlibrary{calc}
\usetikzlibrary{patterns}

\allowdisplaybreaks

\DeclareMathOperator\supp{supp}

\newtheorem{lemma}{Lemma}[section]

\numberwithin{equation}{section}
\newtheorem{theorem}{Theorem}[section]
\newtheorem{proposition}[theorem]{Proposition}

\begin{document}
\title{Bilinear Bochner-Riesz Means for Grushin Operators}

\author[S. Bagchi, Md N. Molla, J. Singh]
{Sayan Bagchi \and Md Nurul Molla \and Joydwip Singh} 

\address[S. Bagchi]{Department of Mathematics and Statistics, Indian Institute of Science Education and Research Kolkata, Mohanpur--741246, West Bengal, India.}
\email{sayan.bagchi@iiserkol.ac.in}

\address[Md N. Molla]{Department of Mathematics and Statistics, Indian Institute of Science Education and Research Kolkata, Mohanpur--741246, West Bengal, India; Department of General Sciences, BITS Pilani Dubai Campus, International Academic City, Dubai, 345055, UAE}
\email{nurul.pdf@iiserkol.ac.in \& nurul@dubai.bits-pilani.ac.in}

\address[J. Singh]{Department of Mathematics and Statistics, Indian Institute of Science Education and Research Kolkata, Mohanpur--741246, West Bengal, India.}
\email{js20rs078@iiserkol.ac.in}

\subjclass[2020]{43A85, 22E25, 42B15}

\keywords{Bilinear Bochner-Riesz mean, Grushin operator, Bilinear spectral multiplier}

\begin{abstract}
This paper is devoted to the study of $L^{p_1} \times L^{p_2}$ to $L^{p}$ boundedness of the bilinear Bochner-Riesz mean $\mathcal{B}^{\alpha}$ associated with the Grushin operator $\mathcal{L} = -\Delta_{x'} - |x'|^2 \Delta_{x''}$ on $\mathbb{R}^{d_1} \times \mathbb{R}^{d_2}$. Our result almost resembles the corresponding Euclidean results, where the Euclidean dimension in the smoothness threshold is replaced by the topological dimension $d$ of the underlying space, except at few cases.
\end{abstract}

\maketitle

\section{Introduction}

The study of Bochner-Riesz means, a central topic in harmonic analysis, plays a key role in understanding several aspects of the field. Not only do they offer an approach to provide a framework for the Fourier inversion formula in the $L^p$ setting, but they also give insights into the theory of Fourier multipliers and the development of the Fourier restriction theory. Recall that the Bochner-Riesz means $S^{\alpha}$ of order $\alpha \geq 0$ in the Euclidean spaces $\mathbb{R}^n$ is a Fourier multiplier operator defined by
\begin{align*}
    S^{\alpha}(f)(x) = \int_{\mathbb{R}^n} \left(1-|\xi|^2\right)_{+}^{\alpha} \widehat{f}(\xi)\ e^{2 \pi i x \cdot \xi} \ d\xi,
\end{align*}
where $(r)_{+} = \max\{r, 0\}$ for $r \in \mathbb{R}$, $f \in \mathcal{S}(\mathbb{R}^n)$, the space of all Schwartz class functions in $\mathbb{R}^n$, and $\widehat{f}$ is the Fourier transform of $f$. A classical problem in the theory of Bochner-Riesz means is to find the range of $\alpha$ for which the $L^p$-boundedness of $S^{\alpha}  $ holds for $1\leq p \leq \infty$. The famous Bochner-Riesz conjecture states that  for $1\leq p \leq \infty$ and $p\neq 2$,  $S^{\alpha}$ is bounded on $L^p(\mathbb{R}^n)$ if and only if $\alpha> \alpha(p) = \max\Big\{n|\frac{1}{p}-\frac{1}{2}|-\frac{1}{2}, 0 \Big\}$. For $n=2$, the conjecture was settled by Carleson and Sj\"olin \cite{Carleson_Sjolin_Multiplier_Problem_on_Disc_1972}. However, for $n \geq 3$ the conjecture still remains open, with only partial progress having been made so far.  For a detailed historical background and recent progress on Bochner-Riesz conjecture, we refer to  \cite{Tao_Recent_Progress_Restriction_conjecture_2004}, \cite{Bourgain_Guth_Oscillatory_Integral_2011}, \cite{Lee_Improved_Bounds_Bochner_Riesz_Maximal_2004}, \cite{Bourgain_Besicovitch_type_maximal_1991}, \cite{Tao_Vargas_Bilinear_approach_2000} and references therein.

The Bilinear Bochner-Riesz means can be viewed as a natural bilinear extension of classical (linear) Bochner-Riesz mean $S^{\alpha}$. For $f, g\in \mathcal{S}(\mathbb{R}^n)$ and for $\alpha \geq 0$, the bilinear Bochner-Riesz mean, denoted by $B^{\alpha}$, is defined as
\begin{align*}
    B^{\alpha}(f,g)(x) = \int_{\mathbb{R}^n} \int_{\mathbb{R}^n} \left(1-|\xi|^2-|\eta|^2 \right)_{+}^{\alpha} \widehat{f}(\xi)\, \widehat{g}(\eta)\, e^{2 \pi i x \cdot (\xi + \eta)} \, d\xi \,  d\eta .
\end{align*}
This operator is also closely related to the convergence of the product of two $n$-dimensional Fourier series; see \cite{Bernicot_Grafakos_Song_Yan_Bilinear_Bochner_Riesz_2015} for further details. As in the linear setting, the bilinear Bochner-Riesz problem concerns determining the range of $\alpha$ for which $B^{\alpha}$ is bounded from $L^{p_1}(\mathbb{R}^n) \times L^{p_2} (\mathbb{R}^n)$ to  $L^{p}(\mathbb{R}^n)$ for $1\leq p_1, p_2 \leq \infty$ with $1/p=1/p_1 + 1/p_2$. In the one-dimensional case ($n=1$), this problem was studied by \cite[Theorem 4.1]{Bernicot_Grafakos_Song_Yan_Bilinear_Bochner_Riesz_2015}, \cite{Grafakos_Li_Disc_Multiplier_2006} and \cite[Theorem 2.2]{Jotsaroop_Shrivastava_Maximal_Bochner_Riesz_2022}. For dimensions $n\geq 2$ and $\alpha=0$, Diestel and Grafakos \cite{Diestel_Grafakos_Ball_multiplier_problem_2007}, proved that if exactly one of $p_1, p_2, p'$ is less than $2$, then  $B^{\alpha}$ is not bounded from $L^{p_1}(\mathbb{R}^n) \times L^{p_2}(\mathbb{R}^n)$ to $L^{p}(\mathbb{R}^n)$, where $p'$ is the conjugate exponent of $p$. For $n\geq 2$ and $\alpha>0$, the work of \cite{Bernicot_Grafakos_Song_Yan_Bilinear_Bochner_Riesz_2015} marks the beginning of the investigation of this problem and established a series of positive results along with some necessary conditions as well. Shortly afterward, subsequent research led to improvements of these results in two directions. For the range $2\leq p_1,p_2\leq \infty$, Jeong, Lee and Vargas \cite{Jeong_Lee_Vargas_Bilinear_Bochner_Riesz_2018} obtained new results by improving the lower bounds on the index  $\alpha$. Their approach relies on a new decomposition of the bilinear Bochner-Riesz operator into a product of square functions in the pointwise sense, see \cite[Section 3]{Jeong_Lee_Vargas_Bilinear_Bochner_Riesz_2018} for more details. On the other hand,  Liu and Wang \cite{Liu_Wang_Bilinear_Bochner_Riesz_Non_Banach_2020} significantly improved the result of \cite{Bernicot_Grafakos_Song_Yan_Bilinear_Bochner_Riesz_2015} for $p<1$  by obtaining lower smoothness thresholds for $\alpha$. Below, we highlight some key results from \cite{Bernicot_Grafakos_Song_Yan_Bilinear_Bochner_Riesz_2015} and \cite{Liu_Wang_Bilinear_Bochner_Riesz_Non_Banach_2020}.

\begin{theorem}\cite[Proposition 4.10, 4.11 and 4.2]{Bernicot_Grafakos_Song_Yan_Bilinear_Bochner_Riesz_2015}, \cite[Theorem 1.1]{Liu_Wang_Bilinear_Bochner_Riesz_Non_Banach_2020}
\label{Theorem: Euclidean bilinear Bochner-Riesz for Grafakos}
Let $n \geq 2$ and $1\leq p_1, p_2 \leq \infty$ with $1/p = 1/p_1 +1/p_2$. Then $B^{\alpha}$ is bounded from $L^{p_1}(\mathbb{R}^n) \times L^{p_2}(\mathbb{R}^n)$ to $L^{p}(\mathbb{R}^n)$ if $p_1, p_2, p$ and $\alpha$ satisfy one of the following conditions:
\begin{enumerate}
    \item (Region I) $2 \leq p_1, p_2 < \infty$, $1\leq p \leq 2$ and $\alpha> (n-1)(1-\frac{1}{p})$.
    \item (Region II) $2 \leq p_1, p_2, p < \infty$ and $\alpha> \frac{n-1}{2} + n(\frac{1}{2}-\frac{1}{p})$.
    \item (Region III) $2 \leq p_2 < \infty$, $1\leq p_1, p < 2$ and $\alpha> n(\frac{1}{2}-\frac{1}{p_2})-(1-\frac{1}{p})$.
    \item (Region III) $2 \leq p_1 < \infty$, $1\leq p_2, p < 2$ and $\alpha> n(\frac{1}{2}-\frac{1}{p_1})-(1-\frac{1}{p})$.
    \item (Region IV) $1\leq p_1 \leq 2 \leq p_2 \leq \infty$, $0<p<1$ and $\alpha> n(\frac{1}{p_1}-\frac{1}{2})$.
    \item (Region IV) $1\leq p_2 \leq 2 \leq p_1 \leq \infty$, $0<p<1$ and $\alpha> n(\frac{1}{p_2}-\frac{1}{2})$.
    \item (Region V) $1\leq p_1 \leq p_2 \leq 2$ and $\alpha> n(\frac{1}{p}-1)-(\frac{1}{p_2}-\frac{1}{2})$.
    \item (Region V) $1\leq p_2 \leq p_1 \leq 2$ and $\alpha> n(\frac{1}{p}-1)-(\frac{1}{p_1}-\frac{1}{2})$. 
\end{enumerate}
\end{theorem}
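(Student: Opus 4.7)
My plan is to view $m^\alpha(\xi,\eta)=(1-|\xi|^2-|\eta|^2)_+^\alpha$ as a radial multiplier on $\mathbb{R}^{2n}$, so that $B^\alpha$ is essentially the linear Bochner--Riesz operator of order $\alpha$ on $\mathbb{R}^{2n}$ applied to the tensor product $f\otimes g$ and then restricted to the diagonal $\{(x,x):x\in\mathbb{R}^n\}$. The starting point is the explicit kernel representation
\begin{align*}
B^\alpha(f,g)(x)=\int_{\mathbb{R}^n}\!\!\int_{\mathbb{R}^n}K^\alpha(x-y,\,x-z)\,f(y)\,g(z)\,dy\,dz,
\end{align*}
where, up to a constant, $K^\alpha(w)=|w|^{-n-\alpha}J_{n+\alpha}(2\pi|w|)$ on $\mathbb{R}^{2n}$. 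The Bessel asymptotics give decay $|w|^{-n-1/2-\alpha}$ modulated by a phase $e^{\pm 2\pi i|w|}$. I would then perform the standard smooth dyadic decomposition $m^\alpha=m_0+\sum_{j\geq 1}2^{-j\alpha}\widetilde{m}_j$, where $\widetilde{m}_j$ is a bump adapted to the annular shell $\{1-2^{-j+1}\leq|\xi|^2+|\eta|^2\leq 1-2^{-j-1}\}$, reducing matters to scale-uniform bilinear estimates for each piece $B_j^\alpha$ that can then be summed geometrically in $j$.

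For the Banach ranges (Regions~I--III) I would rely on classical linear theory in dimension $2n$ together with the uniform bound on $\widetilde{m}_j$. Region~I would follow by Hausdorff--Young applied to the kernel, and a direct geometric-series argument yields the threshold $\alpha>(n-1)(1-1/p)$. Region~III (each of its two sub-items) would be obtained by freezing one variable and invoking the linear Bochner--Riesz theorem on $\mathbb{R}^n$ in the remaining variable, so that the threshold picks up the linear Bochner--Riesz index associated with the factor having the larger exponent. For Region~II, where all three exponents are at least $2$, the sharper threshold would require the pointwise bilinear factorization of \cite{Jeong_Lee_Vargas_Bilinear_Bochner_Riesz_2018}: one writes $|B_j^\alpha(f,g)|\lesssim\mathcal{G}_jf\cdot\mathcal{G}_jg$ for suitable square functions $\mathcal{G}_j$, applies Cauchy--Schwarz, and invokes Stein--Tomas type restriction estimates on $S^{2n-1}$.

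For the non-Banach ranges (Regions~IV and~V, where $p<1$) I would adopt the strategy of \cite{Liu_Wang_Bilinear_Bochner_Riesz_Non_Banach_2020}. Since for $p<1$ both duality and quasi-Banach interpolation are inefficient, one would estimate $\|B_j^\alpha(f,g)\|_p$ directly by combining an $L^2$ bound on the factor whose exponent is at least $2$ with a pointwise kernel estimate on the other factor, thereby exploiting the oscillation of the Bessel kernel at scale $2^j$. The precise thresholds in Region~IV and Region~V then arise from optimizing the balance between spherical restriction on $S^{2n-1}$, the kernel decay, and the $p$-triangle inequality for $L^p$ with $p<1$; at the boundary $p_i=1$ one would switch to an $H^1$ atomic decomposition.

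The step I expect to be the main obstacle is attaining the sharp thresholds in Region~II and in the non-Banach Regions~IV--V, rather than merely obtaining some $p<1$ bound: in the former case one must use the full strength of the square-function or bilinear restriction technology on $S^{2n-1}$; in the latter case one must genuinely exploit the oscillation of $K^\alpha$ beyond what pointwise size bounds yield, since in $L^p$ with $p<1$ there is no spare room to lose by interpolation. Gluing the various regional estimates together by multilinear complex interpolation is also delicate, because several of the natural endpoints sit only in weak-type spaces $L^{p,\infty}$ and have to be upgraded before the interior estimates are declared.
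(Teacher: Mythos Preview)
The paper does not prove this theorem at all: it is stated purely as background, with explicit attribution to \cite[Propositions~4.2, 4.10, 4.11]{Bernicot_Grafakos_Song_Yan_Bilinear_Bochner_Riesz_2015} and \cite[Theorem~1.1]{Liu_Wang_Bilinear_Bochner_Riesz_Non_Banach_2020}, and no argument is supplied. There is therefore no ``paper's own proof'' to compare your proposal against; the authors simply quote the result and move on to their Grushin analogue.

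That said, a few remarks on your sketch relative to the cited sources. Your kernel representation and dyadic decomposition are exactly what \cite{Bernicot_Grafakos_Song_Yan_Bilinear_Bochner_Riesz_2015} use. However, your attribution of Region~II to the square-function machinery of \cite{Jeong_Lee_Vargas_Bilinear_Bochner_Riesz_2018} is off: the threshold $\alpha>\frac{n-1}{2}+n(\frac{1}{2}-\frac{1}{p})$ recorded here is the earlier Bernicot--Grafakos--Song--Yan bound, obtained by a more elementary duality-and-restriction argument; Jeong--Lee--Vargas give a \emph{strictly better} threshold in that region, which is not what the theorem states. Similarly, your description of Region~I via ``Hausdorff--Young applied to the kernel'' is not how Bernicot et al.\ proceed --- they use Stein--Tomas restriction on $S^{2n-1}$ combined with bilinear interpolation from the vertices $(2,2,1)$, $(2,\infty,2)$, $(\infty,2,2)$. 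For Regions~IV--V your outline matches the Liu--Wang strategy in spirit, though their actual argument hinges on a specific decomposition of the bilinear multiplier into sums of products of linear multipliers (via Fourier series on an interval), which you do not mention and which is the technical heart of getting the sharp non-Banach exponents rather than merely some bound.
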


\vspace{-0.7cm}
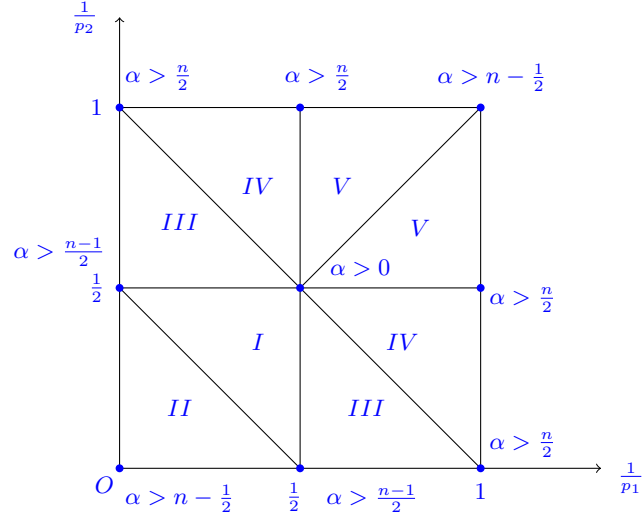
\begin{figure}[!ht]
\begin{centering}
\definecolor{qqqqff}{rgb}{0,0,1}
\begin{tikzpicture}[line cap=round,line join=round,x=0.8cm,y=0.8cm]
\draw (6,6)-- (0,6);
\draw (6,0)-- (6,6);
\draw (0,3)-- (6,3);
\draw [->] (0,0) -- (8,0);
\draw [->] (0,0) -- (0,7.5);
\draw (3,6)-- (3,3);
\draw (0,6)-- (6,0);
\draw (3,3)-- (3,0);
\draw (3,3)-- (6,6);
\draw (0,3)-- (3,0);
\begin{scriptsize}
\fill [color=qqqqff] (0,0) circle (1.5pt);
\draw[color=qqqqff] (1,-.5) node {$\alpha>n-\frac{1}{2}$};
\fill [color=qqqqff] (0,6) circle (1.5pt);
\draw[color=qqqqff] (0.65,6.5) node {$\alpha > \frac{n}{2}$};
\fill [color=qqqqff] (6,6) circle (1.5pt);
\draw[color=qqqqff] (6.19,6.5) node {$\alpha>n-\frac{1}{2}$};
\fill [color=qqqqff] (6,0) circle (1.5pt);
\draw[color=qqqqff] (6.7,0.4) node {$\alpha>\frac{n}{2}$};
\fill [color=qqqqff] (3,0) circle (1.5pt);
\draw[color=qqqqff] (4.2,-0.5) node {$\alpha>\frac{n-1}{2}$};
\fill [color=qqqqff] (3,6) circle (1.5pt);
\draw[color=qqqqff] (3.3,6.5) node {$\alpha>\frac{n}{2}$};
\fill [color=qqqqff] (0,3) circle (1.5pt);
\draw[color=qqqqff] (-1,3.6) node {$\alpha>\frac{n-1}{2}$};
\fill [color=qqqqff] (3,3) circle (1.5pt);
\draw[color=qqqqff] (4,3.33) node {$\alpha>0$};
\fill [color=qqqqff] (6,3) circle (1.5pt);
\draw[color=qqqqff] (6.7,2.8) node {$\alpha>\frac{n}{2}$};
\fill [color=qqqqff] (6,-0.5) circle (0pt);
\draw[color=qqqqff] (6.0,-0.38) node {$1$};
\fill [color=qqqqff] (3,-0.5) circle (0pt);
\draw[color=qqqqff] (2.9,-0.48) node {$\frac{1}{2}$};
\fill [color=qqqqff] (-0.25,-0.42) circle (0pt);
\draw[color=qqqqff] (-0.25,-0.28) node {$O$};
\fill [color=qqqqff] (-0.5,3) circle (0pt);
\draw[color=qqqqff] (-0.38,3.0) node {$\frac{1}{2}$};
\fill [color=qqqqff] (-0.5,6) circle (0pt);
\draw[color=qqqqff] (-0.38,6.0) node {$1$};
\fill [color=qqqqff] (8.5,-0.5) circle (0pt);
\draw[color=qqqqff] (8.5,-0.16) node {$\frac{1}{p_1}$};
\fill [color=qqqqff] (-0.5,8.5) circle (0pt);
\draw[color=qqqqff] (-0.58,7.5) node {$\frac{1}{p_2}$};
\draw[color=qqqqff] (2.3,2.1) node {$I$};
\draw[color=qqqqff] (1.0,1.0) node {$II$};
\draw[color=qqqqff] (4.1,1.0) node {$III$};
\draw[color=qqqqff] (1.0,4.1) node {$III$};
\draw[color=qqqqff] (4.7,2.1) node {$IV$};
\draw[color=qqqqff] (2.3,4.7) node {$IV$};
\draw[color=qqqqff] (3.7,4.7) node {$V$};
\draw[color=qqqqff] (5,4) node {$V$};
\end{scriptsize}
\end{tikzpicture}
        \caption{Here $O=(0,0)$, and $\alpha>\alpha(p_1, p_2)$ represents that $B^{\alpha}$ is bounded on $L^{p_1}(\mathbb{R}^n) \times L^{p_2}(\mathbb{R}^n) \to L^p(\mathbb{R}^n)$ for $\alpha$ bigger that $\alpha(p_1, p_2)$, see Theorem \ref{Theorem: Euclidean bilinear Bochner-Riesz for Grafakos}.}
\end{centering}        
\end{figure}

Since the Euclidean Laplacian $-\Delta$ is positive, essentially self-adjoint operator, one can see that via the spectral resolution of $-\Delta$, the Bochner-Riesz mean is given by
\begin{align*}
  S^{\alpha} := (I-(-\Delta))_{+}^{\alpha}.  
\end{align*}
This perspective naturally allows one to associate the notion of Bochner-Riesz means to other positive, self-adjoint operators as well. In this context,  we are particularly interested in the Grushin operator $\mathcal{L}$, defined on $\mathbb{R}^d := \mathbb{R}^{d_1} \times \mathbb{R}^{d_2}$, where $d_1, d_2 \geq 1$, and the space equipped with the standard Lebesgue measure $dx$. The operator $\mathcal{L}$ is given by
\begin{align*}
    \mathcal{L} 
    &= -\Delta_{x'} - |x'|^2 \Delta_{x''},
\end{align*}
where $x=(x',x'')\in \mathbb{R}^{d_1} \times \mathbb{R}^{d_2}$, while $\Delta_{x'}$, $\Delta_{x''}$ are the Laplacian on $\mathbb{R}^{d_1}$, $\mathbb{R}^{d_2}$ respectively and $|x'|$ denotes the Euclidean norm of $x'$. The Grushin operator $\mathcal{L}$ is positive and essentially self-adjoint on $L^2(\mathbb{R}^d)$. Although $\mathcal{L}$ fails to be elliptic on the plane $x' = 0$, it is nevertheless hypoelliptic, due to a result of H\"ormander \cite{Hormander_Hypoellptic_second_order_differential_equatuion_1967}.
The spectral decomposition of $\mathcal{L}$ is well known (see \cite{Bagchi_Garg_Grushin_2024}). For $f \in \mathcal{S}(\mathbb{R}^d)$, let
\begin{align*}
    \mathcal{F}_2f(x', \lambda) := f^{\lambda}(x') = \int_{\mathbb{R}^{d_2}} f(x',x'') e^{-i \lambda \cdot x''} \ dx''
\end{align*}
denote the Fourier transform of $f$ in the second variable $x''$. Set $[k] := 2k+d_1$. For $\alpha>0 $, the Bochner-Riesz mean associated with $\mathcal{L}$ is defined by
\begin{align}
\label{Bochner-Riesz for Grushin}
    S^{\alpha}(\mathcal{L}) f(x) &= \frac{1}{(2\pi)^{d_2}} \int_{\mathbb{R}^{d_2}} e^{i \lambda \cdot x''} \sum_{k=0}^{\infty} \left(1-[k]|\lambda| \right)_{+}^{\alpha} P_k^{\lambda}f^{\lambda}(x') \ d\lambda ,
\end{align}
where $P_k^{\lambda}$ denotes the orthogonal projection onto the eigenspace of scaled Hermite operator $H(\lambda):= (-\Delta_{x'} + |x'|^2 |\lambda|^2)$ for $\lambda \neq 0$, corresponding to the eigenvalue $[k]|\lambda|$.

Given that $\mathcal{L}$ is a second order, subelliptic differential operator, the notion of ``control distance'' $\varrho$ associated with $\mathcal{L}$ can be introduced, 
and as illustrated in \cite{Robinson_Sikora_Degenerate_Elliptic_Operator_2008}, \cite{Martini_Sikora_Grushin_Weighted_Plancherel_2012}, $(\mathbb{R}^d, \varrho, dx)$ becomes a doubling metric-measure space with homogeneous dimension $Q= d_1 + 2d_2$. Also note that $d= d_1 + d_2$ is called the topological dimension of $\mathbb{R}^d$. In view of a general theorem in the context of doubling metric-measure space \cite{Duong_Ouhabaz_Sikora_Sharp_Multiplier_2002}, it follows that $S^{\alpha}(\mathcal{L})$ is bounded on $L^p(\mathbb{R}^d)$ for $1\leq p \leq \infty$, whenever $\alpha > Q/2$ (see also \cite{Martini_Sikora_Grushin_Weighted_Plancherel_2012}). For subelliptic operators, the $L^p$-boundedness of Bochner-Riesz multipliers or more generally, spectral multipliers with smoothness conditions formulated in terms of the homogeneous dimension $Q$, in many cases, are not sharp. As a result, a considerable amount of attention has been devoted to determining the optimal smoothness threshold for spectral multiplier theorems on various non-Euclidean settings. The line of investigation began with the groundbreaking work of M\"uller and Stein \cite{Muller_Stein_Spectral_Multiplier-Heisenberg_1994} and independently by Hebisch \cite{Hebisch_Spectral_Multiplier_Heisenberg_1993}, in the context of Heisenberg (-type) groups and subsequently, significant amount of research has been carried out in many different settings; see, for instance \cite{Cowling_Sikora_spectral_mult_sublaplacian_2001}, \cite{Cowling_Klima_Sikora_Kohn_Laplacian_On_Sphere_2011}, \cite{Marini_Multiplier_polynomial_Growth_2012}, \cite{Martini_Sikora_Grushin_Weighted_Plancherel_2012}, \cite{Martini_Spectral_Multiplier_Heisenberg_Reiter_2015}, \cite{Ahrens_Cowling_Martini_Muller_Quaternionic_Sphere_2020} and references therein. Towards this direction, let us highlight some of the progress made for the operator $S^{\alpha}(\mathcal{L})$. In \cite{Martini_Sikora_Grushin_Weighted_Plancherel_2012}, Martini and Sikora proved that if $\alpha> \max\{d_1+d_2, 2d_2\}/2-1/2$, then $S^{\alpha}(\mathcal{L})$ is bounded on $L^p(\mathbb{R}^d)$ for all $1\leq p \leq \infty$. It is important to remark that for $d_1 \geq d_2$, the smoothness order simplifies to $\alpha>(d-1)/2$, and this result is sharp (see \cite{Martini_Sikora_Grushin_Weighted_Plancherel_2012}). Subsequently, Martini and M\"uller \cite{Martini_Muller_Sharp_Multiplier_Grushin_2014} removed the restriction $d_1 \geq d_2$ by showing that the condition $\alpha>(d-1)/2$ is indeed sharp for $L^p$-boundedness of $S^{\alpha}(\mathcal{L})$ for all $d_1, d_2 \geq 1$. However, the scenario is quiet different when $0< \alpha \leq \frac{d-1}{2}$. One cannot expect $S^{\alpha}(\mathcal{L})$ to be bounded for all $p \in [1, \infty]$. In this context,  Chen and Ouhabaz \cite{Chen_Ouhabaz_Bochner_Riesz_Grushin_2016} proved a $p$-specific boundedness result for $\mathcal{L}$ with smoothness indices $\alpha> \max\{d_1+d_2, 2d_2\}|1/p-1/2|-1/2 $. This result was recently  improved by Niedorf in \cite{Niedorf_Bochner_Riesz_Grushin_2022}, who proved that for $1 \leq  p \leq \min{\{2d_1/(d_1 + 2), (2d_2 + 2)/(d_2 + 3)\}}$ and $\alpha >d|1/p-1/2|-1/2$, the Bochner–Riesz mean $S^{\alpha}(\mathcal{L})$ is bounded on $L^p(\mathbb{R}^d )$.  

This linear phenomenon motivates us to study corresponding boundedness problem for the bilinear Bochner-Riesz operator associated with Grushin operator $\mathcal{L}$. Before proceeding further, let us define $\mathcal{E}(\mathbb{R}^d) := \cup_{l \in \mathbb{N}} \mathcal{E}_l(\mathbb{R}^d)$, where each $\mathcal{E}_l(\mathbb{R}^d)$ is defined as follows
\begin{align*}
    \mathcal{E}_l(\mathbb{R}^d) = \{ f \in L^2(\mathbb{R}^d) : f^{\lambda}(x')= \sum_{|\mu|\leq l} C(\lambda, \mu) \Phi_{\mu}^{\lambda}(x'), \  \text{for a bounded function}\  C(\lambda, \mu)\\ \text{which is compactly supported in}\ \lambda\text{-variable} \}.
\end{align*}
It was shown in \cite{Bagchi_Garg_Grushin_2024} that $\mathcal{E}(\mathbb{R}^d)$ is dense in $L^2(\mathbb{R}^d)$. In particular, one can show that the operator $S^{\alpha}(\mathcal{L})$ (see \eqref{Bochner-Riesz for Grushin}) is densely defined on $L^2(\mathbb{R}^d)$. For $f,g \in \mathcal{E}(\mathbb{R}^d)$ and $\alpha, R>0$, the bilinear Bochner-Riesz operator associated with the Grushin operator $\mathcal{L}$ is defined by
\begin{align}
\label{Bilinear Bochner-Riesz operator definition with R}
    \mathcal{B}_R^{\alpha}(f,g)(x) &= \frac{1}{(2\pi)^{2 d_2}} \int_{\mathbb{R}^{d_2}} \int_{\mathbb{R}^{d_2}} e^{i (\lambda_1 + \lambda_2) \cdot x''} \sum_{k_1, k_2=0}^{\infty} \left(1-\frac{[k_1]|\lambda_1|+[k_2]|\lambda_2|}{R} \right)_{+}^{\alpha} \\
    &\nonumber \hspace{7cm} P_{k_1}^{\lambda_1}f^{\lambda_1}(x') P_{k_2}^{\lambda_2}g^{\lambda_2}(x') \  d\lambda_1 \  d\lambda_2 .
\end{align}

In this paper, our goal is to obtain analogous result as of Theorem \ref{Theorem: Euclidean bilinear Bochner-Riesz for Grafakos}, with smoothness threshold $\alpha(p_1, p_2)$ possibly expressed in terms of the topological dimension $d$, replacing the Euclidean dimension $n$, such that whenever $\alpha> \alpha(p_1, p_2)$, there exists a positive constant $C>0$, uniformly over $R>0$, for which
\begin{align*}
    \|\mathcal{B}^{\alpha}_R(f,g)\|_{L^p(\mathbb{R}^d)} &\leq C \, \|f\|_{L^{p_1}(\mathbb{R}^d)} \|g\|_{L^{p_2}(\mathbb{R}^d)},
\end{align*}
holds for $f, g \in \mathcal{E}(\mathbb{R}^d)$ with $1\leq p_1, p_2 \leq \infty$ and $1/p = 1/p_1+1/p_2$. Note that on $\mathbb{R}^d$, we have the family of non-isotropic dilation $\{\delta_t\}_{t>0}$, defined by $\delta_t(x', x'')= (tx', t^2x'')$. Using these  dilation, one can easily check that
\begin{align*}
    \mathcal{B}_{t^{-2}R}^{\alpha}(f,g)(x) = \delta_{t^{-1}} \circ \mathcal{B}_R^{\alpha} (\delta_t f, \delta_t g)(x) .
\end{align*}
In view of the above relation, it is enough to consider the case $R=1$. When $R=1$, we simply denote $\mathcal{B}_1^{\alpha} =: \mathcal{B}^{\alpha}$.

Set $D=\max\{d_1+d_2, 2d_2\}$ and $\mathfrak{D} = \min\{D, d_1+d_2+1\}$. The following is our first main result concerning the boundedness of $\mathcal{B}^{\alpha}$.

\begin{theorem}
\label{Bilinear Bochner-Riesz main theorem}
Let $1\leq p_1, p_2 \leq \infty$ with $1/p = 1/p_1 +1/p_2$. Then $\mathcal{B}^{\alpha}$ is bounded from $L^{p_1}(\mathbb{R}^d) \times L^{p_2}(\mathbb{R}^d)$ to $L^{p}(\mathbb{R}^d)$ provided that $p_1, p_2, p$ and $\alpha> \alpha(p_1, p_2)$ satisfy one of the following conditions:
\begin{enumerate}
    \item (Region I) $2 \leq p_1, p_2 < \infty$, $1\leq p \leq 2$ and $\alpha(p_1, p_2)= (d-1)(1-\frac{1}{p})$.
    \item (Region II) $2 \leq p_1, p_2, p < \infty$ and $\alpha(p_1, p_2)= \frac{d-1}{2} + d(\frac{1}{2}-\frac{1}{p})$.
    \item (Region III) $2 \leq p_2 \leq \infty$, $1\leq p_1, p \leq 2$ and $\alpha(p_1, p_2)= Q(\frac{1}{p_1}-\frac{1}{2})+(d-1)(1-\frac{1}{p})$.
    \item (Region III) $2 \leq p_1 \leq \infty$, $1\leq p_2, p \leq 2$ and $\alpha(p_1, p_2)= Q(\frac{1}{p_2}-\frac{1}{2})+(d-1)(1-\frac{1}{p})$.
    \item (Region IV) $1\leq p_1 \leq 2 \leq p_2 \leq \infty$, $0<p\leq 1$ and $\alpha(p_1, p_2)= \mathfrak{D}(\frac{1}{p}-1)+Q(\frac{1}{2}-\frac{1}{p_2})$.
    \item (Region IV) $1\leq p_2 \leq 2 \leq p_1 \leq \infty$, $0<p \leq 1$ and $\alpha(p_1, p_2)= \mathfrak{D}(\frac{1}{p}-1)+Q(\frac{1}{2}-\frac{1}{p_1})$.
    \item (Region V) $1\leq p_1, p_2 \leq 2$ and $\alpha(p_1, p_2)= \mathfrak{D}(\frac{1}{p}-1)$.
\end{enumerate}
\end{theorem}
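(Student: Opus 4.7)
The plan is to adapt the dyadic-decomposition scheme of \cite{Bernicot_Grafakos_Song_Yan_Bilinear_Bochner_Riesz_2015, Liu_Wang_Bilinear_Bochner_Riesz_Non_Banach_2020} to the subelliptic setting, substituting the Euclidean ingredients by the weighted Plancherel estimate of Martini--Sikora \cite{Martini_Sikora_Grushin_Weighted_Plancherel_2012} and the sharp linear Bochner--Riesz bound of Niedorf \cite{Niedorf_Bochner_Riesz_Grushin_2022}. First, I fix a smooth bump $\phi$ supported in $[1/2,2]$ with $\sum_{j\geq 1}\phi(2^{j}s)=1$ for $s\in(0,1/2]$ and write
\begin{align*}
(1-[k_1]|\lambda_1|-[k_2]|\lambda_2|)_{+}^{\alpha} = m_0 + \sum_{j\geq 1} 2^{-j\alpha}\, m_j\bigl([k_1]|\lambda_1|+[k_2]|\lambda_2|\bigr),
\end{align*}
where $m_0$ is supported in $[0,3/4)$ and each $m_j$ is smooth, uniformly bounded in $j$, and supported in $\{s : |1-s|\sim 2^{-j}\}$. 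Denote by $\mathcal{B}_0$ and $\mathcal{B}_j$ the corresponding bilinear operators. It then suffices to prove, in each region, that $\|\mathcal{B}_j(f,g)\|_{L^p}\lesssim 2^{j\sigma(p_1,p_2)}\|f\|_{L^{p_1}}\|g\|_{L^{p_2}}$ with $\sigma(p_1,p_2)<\alpha(p_1,p_2)$, after which the geometric series converges. A further partition-of-unity argument on the annulus $\{(s_1,s_2):|1-s_1-s_2|\sim 2^{-j}\}$ factorizes $m_j(s_1+s_2)$ as a sum of $O(2^j)$ tensor products $\chi_{j,n}^{(1)}(s_1)\chi_{j,n}^{(2)}(s_2)$, each $\chi_{j,n}^{(i)}$ supported in an interval of length $\sim 2^{-j}$; this expresses each $\mathcal{B}_j$ as a sum of products of two linear spectral multipliers for $\mathcal{L}$, to which the quoted linear results apply factor-by-factor.

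With this factorization in hand, the case analysis proceeds region by region. In Region I ($p_1,p_2\geq 2$, $p\leq 2$), H\"older's inequality and the Plancherel identity give an $L^2\times L^2\to L^1$ bound on $\mathcal{B}_j$ with loss $2^{j(d-1)/2}$; interpolation with the trivial $L^\infty\times L^\infty\to L^\infty$ bound yields the threshold $(d-1)(1-1/p)$. Region II follows by duality together with Niedorf's linear bound, which contributes the topological-dimension exponent $d$. Region III is obtained by pairing a linear $L^{p_i}$-bound on one factor (producing the $(d-1)(1-1/p)$ term) with a weighted-Plancherel bound on the other (producing the $Q(1/p_i-1/2)$ term). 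For Regions IV and V, where $p<1$, I would derive pointwise kernel estimates for each $\mathcal{B}_j$ via finite speed of propagation for the wave operator $\cos(t\sqrt{\mathcal{L}})$ combined with weighted Plancherel, and deduce the $L^p$-quasinorm bound by directly computing the $L^p$-norm of the bilinear kernel, in the spirit of the atomic argument of \cite{Liu_Wang_Bilinear_Bochner_Riesz_Non_Banach_2020}.

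The main obstacle is expected to lie in the non-Banach range $p<1$ (Regions IV and V), where standard Banach-space duality and complex interpolation are unavailable. The exponent $\mathfrak{D}=\min\{D,d_1+d_2+1\}$ reflects a competition between the weighted Plancherel bound --- which produces the homogeneous-dimension contribution involving $D$ --- and Niedorf's sharper topological-dimension estimate, which yields a smaller exponent but only in a restricted $p$-range. Correctly interpolating between these two mechanisms to obtain the exponent $\mathfrak{D}$ in place of the coarser $D$, while simultaneously controlling the $O(2^j)$-fold summation in the quasi-triangle inequality for $L^p$ with $p<1$, is expected to be the most delicate step of the argument.
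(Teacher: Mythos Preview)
Your factorization step does not work as stated. A partition of unity in $s_1$ (or in both variables) does not turn $m_j(s_1+s_2)$ into a sum of tensor products: each piece $m_j(s_1+s_2)\psi_n(s_1)$ still depends on the sum $s_1+s_2$ and is not of the form $a(s_1)b(s_2)$. The paper achieves a genuine tensor decomposition via a Fourier series expansion $\varphi_j^\alpha(\eta_1,\eta_2)=\sum_{l\in\mathbb{Z}}\phi_{j,l}^\alpha(\eta_1)\,e^{i\pi l\eta_2}$, whose coefficients decay rapidly in $l$; this decay is what allows the sum over $l$ to close without the extra $2^j$ loss a naive $O(2^j)$-term partition would incur (which would become $2^{j/p}$ in the non-Banach range). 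Several of your endpoint claims are also off: the $L^2\times L^2\to L^1$ bound has loss $O(2^{j\varepsilon})$ (i.e.\ $\alpha(2,2)=0$), not $2^{j(d-1)/2}$, and there is no ``trivial'' $L^\infty\times L^\infty\to L^\infty$ bound---that endpoint requires a bilinear \emph{second-layer} weighted Plancherel estimate and gives $\alpha(\infty,\infty)=d-1/2$.

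More fundamentally, your proposal is missing the mechanism by which the paper replaces the homogeneous dimension $Q$ by the topological dimension $d$. The paper does not apply Niedorf's linear Bochner--Riesz bound factorwise; that bound is for the specific multiplier $(1-\eta)_+^\alpha$, not for bumps at scale $2^{-j}$, so it cannot be invoked on pieces of the form $\chi_{j,n}^{(i)}(\mathcal{L})$. Instead, after the dyadic decomposition, the paper splits the kernel of $\mathcal{B}_j^\alpha$ into near and far parts relative to balls of radius $\sim 2^{j(1+\varepsilon)}$, handles the far parts by pointwise heat-kernel bounds, and on the near part performs a further spatial decomposition into balls $S_n^j$ with bounded overlap. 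On each $S_n^j$ one then distinguishes whether the center is far from the singular set $\{x'=0\}$---where first-layer weighted Plancherel together with the ball-volume comparison $|B(a_n,r)|\sim r^d|a_n'|^{d_2}$ yields the exponent $d$---or near $\{x'=0\}$, where one needs either a second-layer weighted Plancherel estimate (with weight $|x''-y''|^\gamma$) or a further truncation in the $|\lambda|$-variable combined with a sub-ball decomposition of the second layer; it is this last device, adapted from Niedorf, that produces $\mathfrak{D}$ in Regions IV--V. This spatial localization and the ensuing case split are the heart of the argument and have no counterpart in your outline.
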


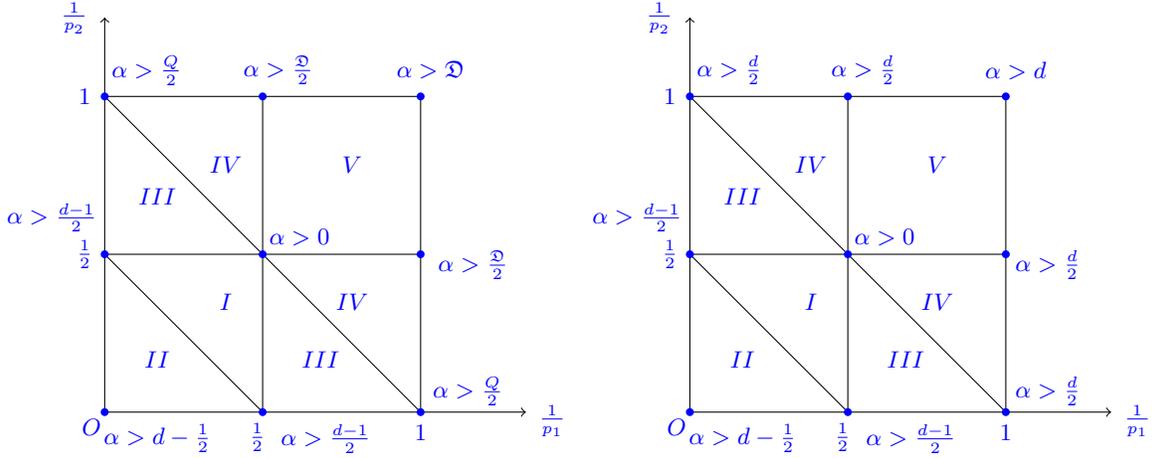
\begin{figure}[!ht]
\begin{centering}
\definecolor{qqqqff}{rgb}{0,0,1}
\begin{tikzpicture}[line cap=round,line join=round,x=0.7cm,y=0.7cm]
\draw (6,6)-- (0,6);
\draw (6,0)-- (6,6);
\draw (0,3)-- (6,3);
\draw [->] (0,0) -- (8,0);
\draw [->] (0,0) -- (0,7.5);
\draw (3,6)-- (3,3);
\draw (0,6)-- (6,0);
\draw (3,3)-- (3,0);
\draw (0,3)-- (3,0);
\begin{scriptsize}
\fill [color=qqqqff] (0,0) circle (1.5pt);
\draw[color=qqqqff] (1,-.5) node {$\alpha>d-\frac{1}{2}$};
\fill [color=qqqqff] (0,6) circle (1.5pt);
\draw[color=qqqqff] (0.8,6.5) node {$\alpha > \frac{Q}{2}$};
\fill [color=qqqqff] (6,6) circle (1.5pt);
\draw[color=qqqqff] (6.19,6.5) node {$\alpha>\mathfrak{D}$};
\fill [color=qqqqff] (6,0) circle (1.5pt);
\draw[color=qqqqff] (6.9,0.4) node {$\alpha>\frac{Q}{2}$};
\fill [color=qqqqff] (3,0) circle (1.5pt);
\draw[color=qqqqff] (4.2,-0.5) node {$\alpha>\frac{d-1}{2}$};
\fill [color=qqqqff] (3,6) circle (1.5pt);
\draw[color=qqqqff] (3.3,6.5) node {$\alpha>\frac{\mathfrak{D}}{2}$};
\fill [color=qqqqff] (0,3) circle (1.5pt);
\draw[color=qqqqff] (-1,3.7) node {$\alpha>\frac{d-1}{2}$};
\fill [color=qqqqff] (3,3) circle (1.5pt);
\draw[color=qqqqff] (3.7,3.33) node {$\alpha>0$};
\fill [color=qqqqff] (6,3) circle (1.5pt);
\draw[color=qqqqff] (7,2.8) node {$\alpha>\frac{\mathfrak{D}}{2}$};
\fill [color=qqqqff] (6,-0.5) circle (0pt);
\draw[color=qqqqff] (6.0,-0.38) node {$1$};
\fill [color=qqqqff] (3,-0.5) circle (0pt);
\draw[color=qqqqff] (2.9,-0.48) node {$\frac{1}{2}$};
\fill [color=qqqqff] (-0.25,-0.42) circle (0pt);
\draw[color=qqqqff] (-0.25,-0.28) node {$O$};
\fill [color=qqqqff] (-0.5,3) circle (0pt);
\draw[color=qqqqff] (-0.38,3.0) node {$\frac{1}{2}$};
\fill [color=qqqqff] (-0.5,6) circle (0pt);
\draw[color=qqqqff] (-0.38,6.0) node {$1$};
\fill [color=qqqqff] (8.5,-0.5) circle (0pt);
\draw[color=qqqqff] (8.5,-0.16) node {$\frac{1}{p_1}$};
\fill [color=qqqqff] (-0.5,8.5) circle (0pt);
\draw[color=qqqqff] (-0.58,7.5) node {$\frac{1}{p_2}$};
\draw[color=qqqqff] (2.3,2.1) node {$I$};
\draw[color=qqqqff] (1.0,1.0) node {$II$};
\draw[color=qqqqff] (4.1,1.0) node {$III$};
\draw[color=qqqqff] (1.0,4.1) node {$III$};
\draw[color=qqqqff] (4.7,2.1) node {$IV$};
\draw[color=qqqqff] (2.3,4.7) node {$IV$};
\draw[color=qqqqff] (4.7,4.7) node {$V$};
\end{scriptsize}
\end{tikzpicture}
\begin{tikzpicture}[line cap=round,line join=round,x=0.7cm,y=0.7cm]
\draw (6,6)-- (0,6);
\draw (6,0)-- (6,6);
\draw (0,3)-- (6,3);
\draw [->] (0,0) -- (8,0);
\draw [->] (0,0) -- (0,7.5);
\draw (3,6)-- (3,3);
\draw (0,6)-- (6,0);
\draw (3,3)-- (3,0);
\draw (0,3)-- (3,0);
\begin{scriptsize}
\fill [color=qqqqff] (0,0) circle (1.5pt);
\draw[color=qqqqff] (1,-.5) node {$\alpha>d-\frac{1}{2}$};
\fill [color=qqqqff] (0,6) circle (1.5pt);
\draw[color=qqqqff] (0.75,6.5) node {$\alpha > \frac{d}{2}$};
\fill [color=qqqqff] (6,6) circle (1.5pt);
\draw[color=qqqqff] (6.19,6.5) node {$\alpha>d$};
\fill [color=qqqqff] (6,0) circle (1.5pt);
\draw[color=qqqqff] (6.8,0.4) node {$\alpha>\frac{d}{2}$};
\fill [color=qqqqff] (3,0) circle (1.5pt);
\draw[color=qqqqff] (4.2,-0.5) node {$\alpha>\frac{d-1}{2}$};
\fill [color=qqqqff] (3,6) circle (1.5pt);
\draw[color=qqqqff] (3.3,6.5) node {$\alpha>\frac{d}{2}$};
\fill [color=qqqqff] (0,3) circle (1.5pt);
\draw[color=qqqqff] (-1,3.7) node {$\alpha>\frac{d-1}{2}$};
\fill [color=qqqqff] (3,3) circle (1.5pt);
\draw[color=qqqqff] (3.7,3.33) node {$\alpha>0$};
\fill [color=qqqqff] (6,3) circle (1.5pt);
\draw[color=qqqqff] (6.8,2.8) node {$\alpha>\frac{d}{2}$};
\fill [color=qqqqff] (6,-0.5) circle (0pt);
\draw[color=qqqqff] (6.0,-0.38) node {$1$};
\fill [color=qqqqff] (3,-0.5) circle (0pt);
\draw[color=qqqqff] (2.9,-0.48) node {$\frac{1}{2}$};
\fill [color=qqqqff] (-0.25,-0.42) circle (0pt);
\draw[color=qqqqff] (-0.25,-0.28) node {$O$};
\fill [color=qqqqff] (-0.5,3) circle (0pt);
\draw[color=qqqqff] (-0.38,3.0) node {$\frac{1}{2}$};
\fill [color=qqqqff] (-0.5,6) circle (0pt);
\draw[color=qqqqff] (-0.38,6.0) node {$1$};
\fill [color=qqqqff] (8.5,-0.5) circle (0pt);
\draw[color=qqqqff] (8.5,-0.16) node {$\frac{1}{p_1}$};
\fill [color=qqqqff] (-0.5,8.5) circle (0pt);
\draw[color=qqqqff] (-0.58,7.5) node {$\frac{1}{p_2}$};
\draw[color=qqqqff] (2.3,2.1) node {$I$};
\draw[color=qqqqff] (1.0,1.0) node {$II$};
\draw[color=qqqqff] (4.1,1.0) node {$III$};
\draw[color=qqqqff] (1.0,4.1) node {$III$};
\draw[color=qqqqff] (4.7,2.1) node {$IV$};
\draw[color=qqqqff] (2.3,4.7) node {$IV$};
\draw[color=qqqqff] (4.7,4.7) node {$V$};
\end{scriptsize}
\end{tikzpicture}
        \caption{Here $O=(0,0)$, and $\alpha>\alpha(p_1, p_2)$ represents that $\mathcal{B}^{\alpha}$ is bounded on $L^{p_1}(\mathbb{R}^d) \times L^{p_2}(\mathbb{R}^d) \to L^p(\mathbb{R}^d)$ for any $\alpha>\alpha(p_1, p_2)$. The left side picture described by Theorem \ref{Bilinear Bochner-Riesz main theorem}, while right side picture described by Theorem \ref{Theorem: Bilinear Bochner-Riesz with Fourier transform away from origin}.}
\end{centering}        
\end{figure}

A few words are in order to emphasize the key contributions of this result. In fact, we have proved the boundedness of $\mathcal{B}^{\alpha}$ at some specific points and then Theorem \ref{Bilinear Bochner-Riesz main theorem} is obtained by a bilinear interpolation from \cite[Section 4.3]{Bernicot_Grafakos_Song_Yan_Bilinear_Bochner_Riesz_2015}. For the points $(p_1,p_2,p)=(\infty, \infty, \infty)$, $(2,\infty,2)$ and $(\infty,2,2)$, that is for the regions $I$ and $II$, our result becomes an exact analogue of the corresponding  Euclidean result (see Theorem \ref{Theorem: Euclidean bilinear Bochner-Riesz for Grafakos}), where the Euclidean dimension $n$ in the smoothness threshold $\alpha(p_1, p_2)$ is replaced by the topological dimension $d$ of the underlying space $\mathbb{R}^d$. First, observe that when $d_1 \geq d_2$, the parameter $D$ equals the topological dimension $d$  of $\mathbb{R}^d$. As a consequence, for $d_1 \geq d_2$ at $(1,1,1/2)$, $(1,2,2/3)$ and by symmetry at $(2,1,2/3)$, the smoothness threshold in our result requires $\alpha>d$ and $\alpha>d/2$ respectively. These results are clearly better than those obtained for M\'etivier groups in \cite[Theorem 1.2]{Bagchi_Molla_Singh_Bilinear_Metivier_2025}, where the results were proved for $\alpha>d+1$ and $\alpha>(d+1)/2$ respectively. But for $ d_2 \geq d_1$, in case of Grushin operator, the smoothness parameter in our result coincides with result one obtained for M\'etivier groups \cite[Theorem 1.2]{Bagchi_Molla_Singh_Bilinear_Metivier_2025}. It is worth mentioning that at $(1,1,1/2)$ the Euclidean result holds for $\alpha>n-1/2$ (see Theorem \ref{Theorem: Euclidean bilinear Bochner-Riesz for Grafakos}). These differences arise because of the absence of an explicit kernel representation in Grushin setting, unlike in the Euclidean setting, where the kernel can be expressed in terms of Bessel function (see \cite[Proposition 4.2 (i)]{Bernicot_Grafakos_Song_Yan_Bilinear_Bochner_Riesz_2015}). 


The above result can be improved further if some additional support conditions are imposed on the input functions. Under these assumptions, one can recover the precise analogue of Theorem \ref{Theorem: Euclidean bilinear Bochner-Riesz for Grafakos}, with smoothness parameter $\alpha(p_1, p_2)$ expressed in terms of topological dimension $d$, replacing the Euclidean dimension $n$. The exception only occurs at the point $(1,1,1/2)$, where there is a loss of $1/2$ in the required smoothness.

\begin{theorem}
\label{Theorem: Bilinear Bochner-Riesz with Fourier transform away from origin}
Let $1\leq p_1, p_2 \leq \infty$  and $p \geq 1$ with $1/p = 1/p_1 +1/p_2$. Then $\mathcal{B}^{\alpha}$ is bounded from $L^{p_1}(\mathbb{R}^d) \times L^{p_2}(\mathbb{R}^d)$ to $L^{p}(\mathbb{R}^d)$ if $p_1, p_2, p$ and $\alpha> \alpha(p_1, p_2)$ satisfy one of the following conditions:
\begin{enumerate}
    \item (Region III) $2 \leq p_2 \leq \infty$, $1\leq p_1, p \leq 2$ and $\alpha(p_1, p_2)= d(\frac{1}{2}-\frac{1}{p_2})-(1-\frac{1}{p})$, if $\supp \mathcal{F}_2 g(z', \cdot) \subseteq \{|\lambda_2| \geq \kappa_2 \}$ for some $\kappa_2>0$ and every $ z' \in \mathbb{R}^{d_1}$.
    \item (Region III) $2 \leq p_1 \leq \infty$, $1\leq p_2, p \leq 2$ and $\alpha(p_1, p_2)= d(\frac{1}{2}-\frac{1}{p_1})-(1-\frac{1}{p})$, if $\supp \mathcal{F}_2 f(y', \cdot) \subseteq \{|\lambda_1| \geq \kappa_1 \}$ for some $\kappa_1>0$ and every $y' \in \mathbb{R}^{d_1}$.
    \item (Region IV) $1\leq p_1 \leq 2 \leq p_2 \leq \infty$, $0<p\leq 1$ and $\alpha(p_1, p_2)= d(\frac{1}{p_1}-\frac{1}{2})$, if $\supp \mathcal{F}_2 g(z', \cdot) \subseteq \{|\lambda_2| \geq \kappa_2 \}$ for some $\kappa_2>0$ and every $z' \in \mathbb{R}^{d_1}$.
    \item (Region IV) $1\leq p_2 \leq 2 \leq p_1 \leq \infty$, $0<p \leq 1$ and $\alpha(p_1, p_2)= d(\frac{1}{p_2}-\frac{1}{2})$, if $\supp \mathcal{F}_2 f(y', \cdot) \subseteq \{|\lambda_1| \geq \kappa_1 \}$ for some $\kappa_1>0$ and every $y' \in \mathbb{R}^{d_1}$.
    \item (Region V) $1\leq p_1, p_2 \leq 2$ and $\alpha(p_1, p_2)= d(\frac{1}{p}-1)$, if $\supp \mathcal{F}_2 f(y', \cdot) \subseteq \{|\lambda_1| \geq \kappa_1 \}$ and $\supp \mathcal{F}_2 g(z', \cdot) \subseteq \{|\lambda_2| \geq \kappa_2 \}$ for some $\kappa_1, \kappa_2>0$ and every $y', z' \in \mathbb{R}^{d_1}$.
\end{enumerate}
\end{theorem}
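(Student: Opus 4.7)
The proof follows the endpoint-plus-interpolation blueprint of Theorem \ref{Bilinear Bochner-Riesz main theorem}, with the spectral support condition exploited to upgrade every occurrence of the homogeneous dimension $Q$ and the hybrid dimension $\mathfrak{D}$ to the topological dimension $d=d_1+d_2$. The guiding observation is that under $\supp \mathcal{F}_2 h(y',\cdot)\subseteq\{|\lambda|\geq \kappa\}$, the non-isotropic scaling $\delta_t$ can be used to reduce to the regime $|\lambda|\asymp 1$, where the scaled Hermite operator $H(\lambda)=-\Delta_{x'}+|x'|^2|\lambda|^2$ has spectrum uniformly separated from zero. In that regime, the weighted Plancherel estimates that dictate the sharp multiplier threshold are governed by the Hermite dimension $d_1$ plus the Euclidean dimension $d_2$, yielding total dimension $d$ in place of $Q$ or $\mathfrak{D}$.

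\textbf{Step 1: Sharp linear multiplier under support.} The first task is to establish that, for $h$ satisfying $\supp \mathcal{F}_2 h\subseteq\{|\lambda|\geq \kappa\}$ and $1\leq p\leq 2$,
\begin{equation*}
\|S^\alpha(\mathcal{L})h\|_{L^p(\mathbb{R}^d)} \lesssim_\kappa \|h\|_{L^p(\mathbb{R}^d)},\qquad \alpha>d\Bigl(\tfrac{1}{p}-\tfrac{1}{2}\Bigr)-\tfrac{1}{2},
\end{equation*}
together with analogous bounds for spectrally-localized pieces of $\mathcal{L}$. Combining the weighted Plancherel estimates of \cite{Martini_Sikora_Grushin_Weighted_Plancherel_2012} with the restriction of the spectrum to the non-degenerate region $|\lambda|\geq \kappa$, this extends the sharp Bochner-Riesz bound of Niedorf \cite{Niedorf_Bochner_Riesz_Grushin_2022} to the entire range $p\in[1,2]$.

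\textbf{Step 2: Bilinear endpoints and interpolation.} Using Step 1, prove $\mathcal{B}^\alpha$-boundedness at the extreme vertices of each region. The $(2,2,1)$ endpoint with $\alpha>0$ is immediate from Plancherel and the spectral decomposition. The $(1,\infty,1)$ endpoint (and its symmetric counterpart) with $\alpha>d/2$ follows by fixing the $L^\infty$ factor and reducing, via bilinearity, to a uniform-in-parameter version of the linear $L^1\to L^1$ estimate of Step 1. The $(1,2,2/3)$ endpoint with $\alpha>d/2$ is obtained through a dyadic decomposition $\mathcal{B}^\alpha=\sum_{j\geq 0}\mathcal{B}^\alpha_j$ where $\mathcal{B}^\alpha_j$ localizes to $1-\bigl([k_1]|\lambda_1|+[k_2]|\lambda_2|\bigr)\asymp 2^{-j}$, combining the linear $L^1$-bound on one factor with a Littlewood-Paley square-function argument in $\sqrt{\mathcal{L}}$ on the other. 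The $(1,1,1/2)$ endpoint with $\alpha>d$ is handled via a direct $L^{1/2}$ estimate on the bilinear convolution kernel, using a subordination expansion
\begin{equation*}
(1-s-t)_+^\alpha = c_\alpha \int_{\mathbb{R}^2} \widehat{\psi_\alpha}(\tau_1,\tau_2)\, e^{i\tau_1 s}\, e^{i\tau_2 t}\, d\tau_1\, d\tau_2,
\end{equation*}
together with wave-group bounds for $e^{i\tau\sqrt{\mathcal{L}}}$ applied to functions with $\mathcal{F}_2$-support away from $\lambda=0$. Bilinear complex interpolation between these endpoints then yields the full ranges (1)--(5).

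\textbf{Main obstacle.} The delicate step is the $L^1\times L^1\to L^{1/2}$ endpoint with $\alpha>d$, which accounts for the documented $1/2$-loss versus the Euclidean threshold $\alpha>n-1/2$ of Theorem \ref{Theorem: Euclidean bilinear Bochner-Riesz for Grafakos}. The absence of an explicit Bessel-type formula for the bilinear Bochner-Riesz kernel in the Grushin setting forces reliance on oscillatory-integral/wave-group estimates for $e^{it\sqrt{\mathcal{L}}}$; extracting the sharp exponent $\alpha>d$ requires carefully using the spectral support condition to recover $d$-dimensional decay on the kernel and then integrating that decay to yield the desired $L^{1/2}$ control.
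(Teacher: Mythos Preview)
Your proposal contains a genuine gap at the $(1,\infty,1)$ endpoint, and more broadly it misidentifies the mechanism by which the support condition is exploited.

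At $(1,\infty,1)$ (case (3) of the theorem) the support hypothesis is on the $L^\infty$ function $g$, not on the $L^1$ function $f$. Your Step 2 proposes to ``fix the $L^\infty$ factor and reduce, via bilinearity, to a uniform-in-parameter version of the linear $L^1\to L^1$ estimate of Step 1.'' But Step 1 is a linear bound for functions \emph{with} the spectral support hypothesis, and $f$ carries no such hypothesis here. There is no reduction of $\mathcal{B}^\alpha(f,g)$ to a single linear multiplier acting on $f$ that simultaneously absorbs the condition on $g$; the bilinear symbol $(1-\eta_1-\eta_2)_+^\alpha$ does not factor. The paper does not attempt any such reduction. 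Instead, after the dyadic decomposition $\mathcal{B}_j^\alpha$ and the localization to $\mathcal{B}_{j,1}^\alpha$, it expands $\varphi_j^\alpha(\eta_1,\eta_2)=\sum_{l}\phi_{j,l}^\alpha(\eta_1)e^{i\pi l\eta_2}$ in a Fourier series in $\eta_2$, obtaining a pointwise product $\sum_l \phi_{j,l}^\alpha(\mathcal{L})f\cdot\psi_l^{\kappa_2}(\mathcal{L},T)g$ with $\psi_l^{\kappa_2}(\eta,\tau)=e^{i\pi l\eta}\tilde\chi(\eta)\Omega_2(\tau)$. The cutoff $\Omega_2$, supported on $\{|\tau|\geq\kappa_2/2\}$, is the sole carrier of the hypothesis on $g$.

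The decisive step, which your proposal does not identify, is then the following. In the problematic range $|a_n'|\lesssim 2^{j(1+\varepsilon)}$ one introduces the second-layer frequency truncations $\Theta_{M_1}(|\lambda_1|)$ and $\Theta_{M_2}(|\lambda_2|)$. Because $\Omega_2(\tau)\Theta(2^{M_2}\tau)\not\equiv 0$ forces $2^{-M_2}\gtrsim\kappa_2$, the $M_2$-sum collapses to finitely many terms $0\leq M_2\leq L_0=L_0(\kappa_2)$. This permits decomposing the supports of \emph{both} $f_{n_1}^j$ and $g_{n_2}^j$ at the \emph{same} scale $2^{(j+M_1)(1+\varepsilon)}$ in the second layer. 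The main term $S_{11}$ is then handled by a bounded-overlap argument (equations \eqref{Bounded overlapping property for M1 balls}--\eqref{Final estimate of S11 in 1,infinity restrictive case}) which delivers the threshold $\alpha>d/2$. For the error term $S_{13}$, the weighted Plancherel estimate of Proposition \ref{Lemma: Martini_Mullar_Weighted_Plancherel} gives a factor $2^{M_2(N-d_2/2)}$, and \emph{because $M_2\leq L_0$ is bounded} this is harmless, whereas without the support condition $M_2$ could be as large as $j$ and the argument would fail. This truncation of the $M_2$-sum is the precise point where the hypothesis $|\lambda_2|\geq\kappa_2$ enters; your description in terms of dilation to $|\lambda|\asymp 1$ or wave-group bounds for $e^{i\tau\sqrt{\mathcal{L}}}$ does not capture it, and the paper nowhere uses wave propagators.
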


Our focus now shifted to the relevant mixed norm estimates. Note that from Theorem \ref{Bilinear Bochner-Riesz main theorem}, at the point $(p_1, p_2,p)=(1,\infty,1)$, the boundedness of $\mathcal{B}^{\alpha}$ holds for $\alpha>Q/2$. If one consider the mixed norm, then these results can be further improved. For $0<p,q< \infty$, let us define the mixed norm of a measurable function $h$ on $\mathbb{R}^d$  by setting
\begin{align*}
    \|h\|_{L_{x''}^{p} L_{x'}^{q}(\mathbb{R}^d)} &:= \Big(\int_{\mathbb{R}^{d_1}} \Big( \int_{\mathbb{R}^{d_2}} |h(x',x'')|^p \ dx'' \Big)^{q/p} \ dx' \Big)^{1/q} ,
\end{align*}
with obvious modification if one of $p, q$ is $\infty$.

The next result summarizes the key findings in this setting.
\begin{theorem}
\label{Theorem: Mixed norm estimate for first layer}
    If $\alpha>(d+1)/2$, then
    \begin{align*}
        \|\mathcal{B}^{\alpha}(f,g)\|_{L_{x''}^{2/3} L_{x'}^{1}} \leq C \|f\|_{L_{x''}^1 L_{x'}^1} \|g\|_{L_{x''}^2 L_{x'}^{\infty}} .
    \end{align*}
\end{theorem}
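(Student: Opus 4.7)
My plan is to reduce the bilinear estimate to linear propagator bounds via an integral representation of the Bochner--Riesz multiplier. I would write $(1-r)_{+}^{\alpha} = \int_{\mathbb{R}} a(\tau)\, e^{i\tau r}\,d\tau$ with $|a(\tau)| \lesssim (1+|\tau|)^{-\alpha-1}$, and apply this with $r = [k_1]|\lambda_1| + [k_2]|\lambda_2|$, the joint eigenvalue of $\mathcal{L}$ on the relevant spectral mode. This separates the bilinear symbol into a product of two linear Schr\"odinger propagators, giving
\[
\mathcal{B}^{\alpha}(f,g)(x) = \int_{\mathbb{R}} a(\tau)\, (e^{i\tau\mathcal{L}} f)(x)\, (e^{i\tau\mathcal{L}} g)(x)\,d\tau.
\]
A variant better suited to the Grushin setting, obtained from the 2D Fourier transform of $(1-|u|^2-|v|^2)_{+}^{\alpha}$ at $u=\sqrt{[k_1]|\lambda_1|}$, $v=\sqrt{[k_2]|\lambda_2|}$, replaces $e^{i\tau\mathcal{L}}$ by the wave propagator $\cos(z\sqrt{\mathcal{L}})$, which enjoys finite propagation speed in the Grushin control distance.

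The key structural observation is that H\"older's inequality aligned with the target mixed norm gives
\[
\|F\cdot G\|_{L^{2/3}_{x''} L^{1}_{x'}} \leq \|F\|_{L^{1}(\mathbb{R}^d)}\, \|G\|_{L^{2}_{x''} L^{\infty}_{x'}},
\]
obtained by splitting $\tfrac{1}{2/3} = 1 + \tfrac{1}{2}$ in $x''$ (so $F$ contributes $L^1_{x''}$ and $G$ contributes $L^2_{x''}$) and then $1 = 1 + 0$ in $x'$ (so $F$ contributes $L^1_{x'}$ and $G$ contributes $L^\infty_{x'}$). Applied with $F = e^{i\tau\mathcal{L}} f$ and $G = e^{i\tau\mathcal{L}} g$ this exactly reproduces the right-hand side of the theorem, modulo $\tau$-dependent constants. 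To handle the $\tau$-integration against the $L^{2/3}$ quasi-norm, I would dyadically decompose $|\tau| \sim 2^k$, estimate each piece via a Cauchy--Schwarz in $\tau$ (producing a Schr\"odinger square function in place of the integral), and assemble the pieces through the $p$-triangle inequality $\|\sum h_k\|_{L^{2/3}}^{2/3} \leq \sum \|h_k\|_{L^{2/3}}^{2/3}$, which extends to the mixed quasi-norm $L^{2/3}_{x''} L^{1}_{x'}$. The estimate closes provided one has linear bounds $\|e^{i\tau\mathcal{L}} f\|_{L^{1}} \lesssim (1+|\tau|)^{\gamma_1}\|f\|_{L^{1}}$ and $\|e^{i\tau\mathcal{L}} g\|_{L^{2}_{x''} L^{\infty}_{x'}} \lesssim (1+|\tau|)^{\gamma_2}\|g\|_{L^{2}_{x''} L^{\infty}_{x'}}$ with $\gamma_1 + \gamma_2$ not exceeding $(d+1)/2$; the decay of $a$ then forces the dyadic series to converge precisely when $\alpha > (d+1)/2$.

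The main obstacle is obtaining the linear bounds with the sharp growth rates. Since $e^{i\tau\mathcal{L}}$ fails to be bounded on $L^{1}$, a direct Schr\"odinger approach is hopeless; instead one has to pass to the wave representation and exploit the finite-propagation-speed support of $\cos(z\sqrt{\mathcal{L}})$ together with Grushin-ball volume bounds and the weighted Plancherel estimate of Martini--Sikora \cite{Martini_Sikora_Grushin_Weighted_Plancherel_2012} to absorb the polynomial loss. The mixed-norm estimate on $g$ is the most delicate step: via Plancherel in $x''$ it reduces to controlling $\sup_{x'}\|(\cos(z\sqrt{\mathcal{L}}) g)^{\lambda}(x')\|_{L^{2}_{\lambda}}$ up to polynomial growth in $z$, which is treated using the Hermite spectral decomposition $P_k^\lambda$ in $x'$ and the unitarity of each individual spectral projection. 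The threshold $(d+1)/2$ emerges as the precise balance between the decay of $a$ and the linear growth rates in this mixed-norm framework, improving the unmixed-norm threshold $\mathfrak{D}/2$ at the point $(1,2,2/3)$ coming from Theorem \ref{Bilinear Bochner-Riesz main theorem} whenever the $x''$-direction dominates.
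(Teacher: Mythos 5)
Your H\"older observation $\|FG\|_{L^{2/3}_{x''}L^{1}_{x'}}\leq\|F\|_{L^{1}}\|G\|_{L^{2}_{x''}L^{\infty}_{x'}}$ is correct and is indeed the structural fact behind the target norm, but the reduction you build on it has a genuine gap: the linear bounds you need are false as stated. For each fixed $\tau\neq 0$ the Schr\"odinger propagator $e^{i\tau\mathcal{L}}$ is unbounded on $L^{1}$, as you note, but the same is true of the fixed-time wave propagator $\cos(z\sqrt{\mathcal{L}})$ (already for the Euclidean Laplacian in dimension $\geq 2$, boundedness on $L^{1}$ requires roughly $(n-1)/2$ derivatives of smoothing), and likewise there is no bound of the form $\|\cos(z\sqrt{\mathcal{L}})g\|_{L^{2}_{x''}L^{\infty}_{x'}}\lesssim(1+|z|)^{\gamma_2}\|g\|_{L^{2}_{x''}L^{\infty}_{x'}}$: a polynomial factor in the time variable cannot compensate for the absence of spectral localization or smoothing, since the failure occurs at each fixed time. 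Your proposed mechanism for the mixed-norm bound on $g$ (Plancherel in $x''$ plus ``unitarity of each individual spectral projection'') also cannot produce the $\sup_{x'}$ control; pointwise bounds require kernel estimates for $P_k^{\lambda}$ such as $\sum_{|\mu|=k}\Phi_{\mu}^{\lambda}(x')^2\lesssim|\lambda|^{d_1/2}[k]^{d_1/2-1}$, which carry dimensional losses. Thus the step where you claim finite propagation speed, ball volumes and weighted Plancherel ``absorb the polynomial loss'' is precisely where the entire proof lives, and it is not supplied; moreover it is not at all clear that whatever growth rates one can salvage this way would sum to $(d+1)/2$ rather than to an exponent involving $Q$.

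For comparison, the paper does not use a global subordination to unitary propagators at all. It keeps the dyadic decomposition $\mathcal{B}^{\alpha}=\sum_j\mathcal{B}^{\alpha}_j$, splits the kernel of $\mathcal{B}^{\alpha}_j$ according to anisotropic rectangles $A_{\theta}$ adapted to the mixed norm (with $x''$-radius $2^{j(1+\varepsilon)}\max\{4\cdot 2^{j(1+\varepsilon)},|x'|\}$), and disposes of the far pieces via the pointwise kernel bounds of Lemma \ref{Lemma: Pointwise kernel estimate for Bj}, which are obtained from the \emph{damped} propagator $e^{(i\tau-1)\mathcal{L}}$ (this has Gaussian kernel bounds, unlike $e^{i\tau\mathcal{L}}$) together with layerwise maximal functions. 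The main piece $\mathcal{B}^{\alpha}_{j,A_1,A_1}$ is then localized to products of Euclidean balls in $x'$ and $x''$, and the bilinear symbol is separated by the Fourier-series expansion $\varphi_j^{\alpha}(\eta_1,\eta_2)=\sum_l\phi^{\alpha}_{j,l}(\eta_1)e^{i\pi l\eta_2}$, after which the truncated weighted restriction estimates (Propositions \ref{Weighted restriction estimate} and \ref{Lemma: Martini_Mullar_Weighted_Plancherel}), the extra cutoffs $\Theta_{M_1}$ in $|\lambda_1|$, and a case analysis on $|x'|$ versus $2^{j(1+\varepsilon)}$ produce the local bound $2^{-j\alpha}2^{j/2}2^{jd_2/2}$, with the remaining $2^{jd_1/2}$ coming from H\"older over the first-layer balls; this is how $\alpha>(d+1)/2$ actually arises. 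If you want to pursue your route, you would need to apply the subordination only to the spectrally localized pieces and re-introduce exactly this spatial localization and weighted Plancherel machinery, at which point the argument essentially becomes the paper's.
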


In order to establish our results, several technical challenges arise because of the underlying geometry of the Grushin operators. First, the absence of group structure which prevents us to use the translation invariance techniques, commonly employed in Euclidean analysis. Second, the ball volume (see (\ref{Estimate : Ball volume})) depends not only on radius but also on the center, which means uniform ball volume estimates are not available  and each case must be handled separately. Another complexity occur from  the sub-ellipticity nature of the Grushin operator, which leads to mismatch between the the homogeneous dimension $Q$ and the topological dimension $d$ of the underlying space $\mathbb{R}^d$. Consequently, if one tries to prove the boundedness of $\mathcal{B}^{\alpha}$ using the existing Euclidean techniques, the best one can expect is that $\mathcal{B}^{\alpha}$ is bounded from $L^{p_1} \times L^{p_2}$ to $L^p$ for $\alpha> \alpha(p_1, p_2)$,  where $\alpha(p_1, p_2)$ is expressed in term of the homogeneous dimension $Q$. However, to obtain boundedness of $\mathcal{B}^{\alpha}$ with $\alpha(p_1, p_2)$ defined in terms of $d$, requires a different approach. Therefore similarly as in \cite{Martini_Sikora_Grushin_Weighted_Plancherel_2012}, \cite{Martini_Muller_Sharp_Multiplier_Grushin_2014} one tries to use appropriate weight and corresponding \emph{weighted Plancherel estimates}. In Grushin setup, there two types of weights, in terms of $|x'|$ (first-layer weight) and in terms of $|x''|$ (second-layer weight). If one use first-layer weight, then this only give the threshold in terms of the topological dimension if $d_1 \geq d_2$, while second-layer weight behave well for weighted Plancherel estimate (see \cite{Martini_Muller_Sharp_Multiplier_Grushin_2014}), but they are not easy to work with always, since certain sub-elliptic estimates does not hold for these weights, but true for first-layer weights (see \cite{Niedorf_Bochner_Riesz_Grushin_2022}, \cite{Chen_Ouhabaz_Bochner_Riesz_Grushin_2016}, \cite{Hebisch_Spectral_Multiplier_Heisenberg_1993}). Therefore in the Banach region ($p>1$), we show that one can actually get the smoothness threshold in terms of the topological dimension, replacing the Euclidean dimension in Theorem \ref{Theorem: Euclidean bilinear Bochner-Riesz for Grafakos}. While for the non-Banach region we use \emph{weighted restriction type estimates} (with respect to first-layer), developed in \cite{Chen_Ouhabaz_Bochner_Riesz_Grushin_2016}, to achieve the required smoothness threshold expressed in terms of $d$ for $d_1 \geq d_2$. To establish the boundedness result of $\mathcal{B}^{\alpha}$, with Euclidean dimension $n$ in $\alpha(p_1, p_2)$ possibly replaced with $d$ for arbitrary $d_1, d_2$, we use ideas from \cite{Bagchi_Molla_Singh_Bilinear_Metivier_2025}, recently developed in case of M\'etivier groups and from \cite{Niedorf_Bochner_Riesz_Grushin_2022}, studied $p$-specific type boundedness result for the linear Bochner-Riesz operators.

The paper is structured as follows. Section \ref{section 2} introduces the necessary preliminaries related to the Grushin setting, including the integral estimates for the distance and weight functions. Next, we derive some kernel estimates corresponding to both linear and bilinear multipliers associated with the Grushin operator, which are presented in detail in Section \ref{section 3}. The proof of the main result, Theorem \ref{Bilinear Bochner-Riesz main theorem}, is presented in  Section \ref{Section: proof of banach case}. There we make a claim for some specific points. Sections \ref{section 5} through \ref{section 9} are dedicated to proving the claim for these specific points. Theorem \ref{Theorem: Bilinear Bochner-Riesz with Fourier transform away from origin} is proved in  Section \ref{Section: Proof of theorem for restricted f and g}. Finally, in Section \ref{section 11}, we establish mixed-norm estimates for bilinear Bochner-Riesz means and concludes with the proof of Theorem \ref{Theorem: Mixed norm estimate for first layer}.
   
Throughout the article we  use standard notation. We use letter $C$ to indicate a  positive constant independent of the main parameters, but may vary from line to line. While writing estimates, we shall use the notation $f \lesssim g$ to indicate $f \leq Cg$ for some $C > 0$, and whenever $f \lesssim g\lesssim f$ , we shall write $f \sim g$. We sometime write $f \lesssim_{\epsilon} g$ to denote $f \leq C g$ where the constant $C$ may depend on the implicit constant $\epsilon$. For a Lebesgue measurable subset $E$ of $\mathbb{R}^d$, we denote by $\chi_E$ the characteristic function of the set $E$. We also denote $A^c$ to be the complement of the set $A$. Let $\Bar{B}$ denote the closure of a ball $B$. For any function $G$ on $\mathbb{R}$, define $\delta_R G(\eta) = G(R \eta) $ for $R>0$. Let $\alpha_1, \alpha_2 \in \mathbb{N}^{d_1}$, then inequalities between multi-indices, $\alpha_1 \leq \alpha_2$ is understood componentwise.

\section{Preliminaries}\label{section 2}
The Grushin operator has been well studied in literature. Let us mention some preliminary details, more details can be found in \cite{Martini_Sikora_Grushin_Weighted_Plancherel_2012}, \cite{Martini_Muller_Sharp_Multiplier_Grushin_2014}, \cite{Chen_Ouhabaz_Bochner_Riesz_Grushin_2016}, \cite{Niedorf_Bochner_Riesz_Grushin_2022}, \cite{Bagchi_Garg_Grushin_2024}, and the references therein. 

Let $\varrho$ denote the control distance associated with the Grushin operator $\mathcal{L}$. Then for all $x,y \in \mathbb{R}^d$,
\begin{align}
\label{Distance equivalent expression}
    \varrho(x,y) \sim |x'-y'| + \left\{ \begin{array}{ll}
       \frac{|x''-y''|}{|x'|+|y'|}  & \ \text{if}\ |x''-y''|^{1/2} \leq |x'|+ |y'| \\
      |x''-y''|^{1/2}   & \ \text{if}\  |x''-y''|^{1/2} \geq |x'|+ |y'| .
    \end{array} \right.
\end{align}
Associated to the distance $\varrho$, we define $ B^{\varrho}(x,r) := \{y \in \mathbb{R}^d : \varrho(x,y) < r \}$, the $\varrho$-ball with center at $x \in \mathbb{R}^d$ and radius $r> 0$. We sometimes omit the superscript $\varrho$ from $B^{\varrho}(x,r)$ when there is no risk of confusion. If there is nothing in the superscript, we always mean the ball is taken with respect to the distance $\varrho$. Consequently, if $|B(x,r)|$ denotes the Lebesgue measure of $\varrho$-ball, then 
\begin{align}
\label{Estimate : Ball volume}
    |B(x,r)| \sim r^{d_1 + d_2} \max\{r, |x'|\}^{d_2}.
\end{align}
The above observation implies that for $\kappa \geq 0$, we have
\begin{align}
\label{Doubling property of balls}
    |B(x,\kappa r)| \leq C (1+\kappa)^Q |B(x,r)|.
\end{align}
Thus,  $\mathbb{R}^d$ equipped with the distance $\varrho$ and the standard Lebesgue measure, becomes a doubling metric measure space with homogeneous dimension $Q:= d_1 + 2d_2$. We call $d= d_1 + d_2$ to be the topological dimension of $\mathbb{R}^d$.

The following lemma, which will be very useful later in our proofs, can be proved in the same way as in \cite[Proposition 2.1]{Niedorf_Bochner_Riesz_Grushin_2022}.
\begin{lemma}
\label{Lemma: Decomposition of balls for small radius}
If $|x'| \leq K r$ for some $K>0$, then there exists a constant $C>0$ such that
\begin{align*}
    B((x', x''),r) \subseteq B^{|\cdot|}(x', r) \times B^{|\cdot|}(x'', C r^2) ,
\end{align*}    
\end{lemma}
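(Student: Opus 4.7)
The plan is to argue directly from the explicit description of the control distance in \eqref{Distance equivalent expression}, combined with the hypothesis $|x'|\leq Kr$. Fix $y=(y',y'')\in B((x',x''),r)$, so that $\varrho(x,y)<r$. From \eqref{Distance equivalent expression} the term $|x'-y'|$ is always dominated by a constant multiple of $\varrho(x,y)$, which immediately gives $|x'-y'|\lesssim r$; absorbing the equivalence constant into the radius (or into the constant $C$ in the statement) yields the first-factor inclusion $y'\in B^{|\cdot|}(x',r)$. In addition, this estimate together with $|x'|\leq Kr$ forces $|y'|\leq |x'|+|x'-y'|\leq (K+C_1)r$, so in particular $|x'|+|y'|\lesssim_{K} r$. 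This bound on $|x'|+|y'|$ is what lets us convert a $\varrho$-estimate into a Euclidean estimate on the $x''$-component.

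For the second factor, I would split into the two regimes appearing in \eqref{Distance equivalent expression}. In the subelliptic regime $|x''-y''|^{1/2}\leq |x'|+|y'|$, the distance controls $|x''-y''|/(|x'|+|y'|)$, so
\[
|x''-y''|\;\lesssim\;(|x'|+|y'|)\,\varrho(x,y)\;\lesssim_{K}\;r\cdot r\;=\;r^{2}.
\]
In the elliptic regime $|x''-y''|^{1/2}\geq |x'|+|y'|$, the distance directly controls $|x''-y''|^{1/2}$, which again gives $|x''-y''|\lesssim r^{2}$. Combining the two cases yields $|x''-y''|\leq C r^{2}$ for a constant $C$ depending only on $K$ (and on the equivalence constants implicit in \eqref{Distance equivalent expression}), which is precisely the second-factor inclusion.

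There is no real obstacle: the statement is essentially a restatement of \eqref{Distance equivalent expression} under the smallness hypothesis $|x'|\leq Kr$, which guarantees that both $|x'|$ and $|y'|$ stay of size $\lesssim r$ on the ball, so that the two pieces of the control distance each degenerate into clean Euclidean estimates on $x''$ at the scale $r^{2}$. The only point to be mindful of is bookkeeping the absolute constants coming from the equivalence $\sim$ in \eqref{Distance equivalent expression}, which is why the second radius must be enlarged by a constant $C=C(K)$; the first radius can likewise be taken as $C'r$ if one tracks the constants, but this is harmless since the stated inclusion is used only up to such constants in the applications.
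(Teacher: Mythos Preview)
Your proposal is correct and is essentially the argument the paper has in mind: the paper does not write out a proof but simply refers to \cite[Proposition~2.1]{Niedorf_Bochner_Riesz_Grushin_2022}, whose proof proceeds exactly via the equivalence \eqref{Distance equivalent expression} and the two-regime case split you describe. One small sharpening: for the first-factor inclusion with radius exactly $r$ (rather than $C'r$), you can use that the Carnot--Carath\'eodory distance satisfies the \emph{exact} inequality $|x'-y'|\leq\varrho(x,y)$ (horizontal projections of admissible paths are shorter), so no equivalence constant needs to be absorbed there.
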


We now consider few integral estimates involving the weight and distance functions, to be used frequently throughout the article.

\begin{lemma}
\label{Lemma: Integral of weight over ball}
Suppose $0\leq \gamma < d_1 $. Then for any $r>0$, we have
\begin{align*}
    \int_{B(a, r)} \frac{dx}{|x'|^{\gamma}} &\leq C r^{d_1+ d_2} \max\{4 r, |a'|\}^{d_2-\gamma} .
\end{align*}
\end{lemma}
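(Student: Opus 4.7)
The plan is to dichotomize on whether the center $a'$ is far from or close to the singularity of the weight $|x'|^{-\gamma}$, using the threshold suggested by the right-hand side itself. Concretely, I would split the argument into the two cases $|a'| > 4r$ and $|a'| \leq 4r$ and verify that in each case the bound $r^{d_1+d_2}\max\{4r,|a'|\}^{d_2-\gamma}$ emerges from combining the Grushin ball-volume estimate \eqref{Estimate : Ball volume} with an appropriate pointwise or integral control of $|x'|^{-\gamma}$.

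In the regime $|a'| > 4r$, the weight is essentially constant on the ball. Since $\varrho(x,a) < r$ implies $|x'-a'| \leq \varrho(x,a) < r \leq |a'|/4$ by the first term in the expression \eqref{Distance equivalent expression} for $\varrho$, the triangle inequality gives $|x'| \sim |a'|$ uniformly for $x \in B(a,r)$. Pulling the weight out and applying the ball-volume estimate yields
\[
\int_{B(a,r)} \frac{dx}{|x'|^{\gamma}} \lesssim |a'|^{-\gamma}\,|B(a,r)| \lesssim |a'|^{-\gamma}\, r^{d_1+d_2}|a'|^{d_2} = r^{d_1+d_2}|a'|^{d_2-\gamma},
\]
which is exactly the claimed bound since $\max\{4r,|a'|\} = |a'|$ in this case.

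In the regime $|a'| \leq 4r$, the ball meets the set $\{x' = 0\}$ and we must genuinely integrate the singular weight. Here I would invoke Lemma \ref{Lemma: Decomposition of balls for small radius} with $K=4$ to enclose $B(a,r) \subseteq B^{|\cdot|}(a',r) \times B^{|\cdot|}(a'', C r^2)$; then $B^{|\cdot|}(a',r) \subseteq B^{|\cdot|}(0, 5r)$ by the triangle inequality. Fubini and the standard Euclidean computation $\int_{|y|<\rho}|y|^{-\gamma}\,dy \lesssim \rho^{d_1-\gamma}$ (valid precisely because $\gamma < d_1$) give
\[
\int_{B(a,r)} \frac{dx}{|x'|^{\gamma}} \leq \bigl|B^{|\cdot|}(a'', Cr^2)\bigr| \int_{|x'|<5r} |x'|^{-\gamma}\,dx' \lesssim r^{2d_2}\cdot r^{d_1-\gamma} = r^{d_1+d_2}(4r)^{d_2-\gamma},
\]
which matches the right-hand side since now $\max\{4r,|a'|\} = 4r$. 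The only delicate step is the small-$|a'|$ case: one must be sure that the enclosure of the Grushin ball into a Euclidean product box is legitimate (supplied by Lemma \ref{Lemma: Decomposition of balls for small radius}) and that the exponent hypothesis $\gamma < d_1$ is used at exactly the right place to guarantee local integrability of $|x'|^{-\gamma}$ on $\mathbb{R}^{d_1}$; no comparable obstacle appears in the far regime, which reduces to a trivial pointwise estimate.
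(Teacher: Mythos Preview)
Your proposal is correct and follows essentially the same approach as the paper: the same dichotomy at $|a'|=4r$, the same pointwise bound $|x'|\sim|a'|$ and volume estimate \eqref{Estimate : Ball volume} in the far case, and the same use of Lemma~\ref{Lemma: Decomposition of balls for small radius} together with local integrability of $|x'|^{-\gamma}$ on $\mathbb{R}^{d_1}$ in the near case. The only cosmetic difference is that in the near case you enlarge $B^{|\cdot|}(a',r)$ to $B^{|\cdot|}(0,5r)$ before integrating, whereas the paper integrates directly over $|x'-a'|<r$; both give the same $r^{d_1-\gamma}$ bound.
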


\begin{proof}
Case-I : $|a'| > 4r$. Since $|x'-a'| \leq \varrho(x,a) \leq r$, one easily see that $|x'| \geq \frac{3}{4} |a'|$. Therefore, using \eqref{Estimate : Ball volume} yields
\begin{align*}
    \int_{B(a, r)} \frac{dx}{|x'|^{\gamma}} &\leq C |a'|^{-\gamma} |B(a, r)| \\
    &\leq C r^{d_1 + d_2} |a'|^{d_2 -\gamma} .
\end{align*}

Case-II : $|a'| \leq 4r$. In this case, using Lemma \ref{Lemma: Decomposition of balls for small radius} and for $0\leq \gamma <d_1$, we get
\begin{align*}
    \int_{B(a, r)} \frac{dx}{|x'|^{\gamma}} &\leq C \int_{|x'-a'|< r} \int_{|x''-a''| < Cr^2} \frac{dx' \, dx''}{|x'|^{\gamma}} \\
    &\leq C r^{d_1 + 2d_2 - \gamma} .
\end{align*}
 Combining both the estimate completes the proof of the lemma.
\end{proof}

\begin{lemma}
\label{Lemma: Integral of weight over ball in second layer}
Suppose $|a'| \leq K r$ for some $K>0$ and $0\leq \gamma < d_2 $. Then for any $r>0$,
\begin{align*}
    \int_{B(a, r)} \frac{dy}{|x''-y''|^{\gamma}} &\leq C \, r^{d_1+ 2(d_2-\gamma)} .
\end{align*}
\end{lemma}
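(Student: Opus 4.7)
The plan is to reduce the control-distance ball to a product of Euclidean balls via Lemma \ref{Lemma: Decomposition of balls for small radius}, and then separate the $y'$ and $y''$ integrations by Fubini, which is possible because the integrand depends only on $y''$. The assumption $|a'| \leq Kr$ is precisely what is needed to invoke the product decomposition.

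More concretely, I would first apply Lemma \ref{Lemma: Decomposition of balls for small radius} to obtain
\begin{align*}
    \int_{B(a,r)} \frac{dy}{|x''-y''|^{\gamma}} \leq \int_{B^{|\cdot|}(a', r)} dy' \int_{B^{|\cdot|}(a'', Cr^2)} \frac{dy''}{|x''-y''|^{\gamma}} \leq C\, r^{d_1} \int_{B^{|\cdot|}(a'', Cr^2)} \frac{dy''}{|x''-y''|^{\gamma}}.
\end{align*}
Now I would prove the uniform estimate
\begin{align*}
    \sup_{x''\in \mathbb{R}^{d_2}} \int_{B^{|\cdot|}(a'', \rho)} \frac{dy''}{|x''-y''|^{\gamma}} \leq C\,\rho^{d_2-\gamma},
\end{align*}
valid for any $a''$ and $\rho>0$, provided $0\leq \gamma < d_2$. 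Applying this with $\rho = Cr^2$ gives a bound of $C r^{2(d_2-\gamma)}$, which combined with the $r^{d_1}$ factor yields the claimed $r^{d_1+2(d_2-\gamma)}$.

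For the uniform estimate itself, I would use a layer-cake/symmetrization argument: writing
\begin{align*}
    \int_{B^{|\cdot|}(a'', \rho)} \frac{dy''}{|x''-y''|^{\gamma}} = \int_0^\infty \bigl|\{y'' \in B^{|\cdot|}(a'', \rho): |x''-y''| < t^{-1/\gamma}\}\bigr|\, dt,
\end{align*}
and bounding the inner measure by $\min\bigl(|B^{|\cdot|}(a'',\rho)|,\, |B^{|\cdot|}(x'', t^{-1/\gamma})|\bigr)$; splitting the $t$-integral at the threshold $t_0 = \rho^{-\gamma}$ where the two terms balance, one obtains two pieces each of size $C\rho^{d_2-\gamma}$, where the convergence of the second piece uses exactly the hypothesis $\gamma < d_2$. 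The step most likely to require care is this uniform bound, because the naive estimate that simply replaces $|x''-y''|^{-\gamma}$ by its value at the center $a''$ would fail to be uniform in $x''$; however, once one observes that the bound is worst for $x''$ lying inside (or near) $B^{|\cdot|}(a'',\rho)$ and uses the translation-invariance of Lebesgue measure in $\mathbb{R}^{d_2}$ to reduce to integration centered at $x''$ itself, the estimate follows from the standard $\int_{|z|<\rho}|z|^{-\gamma}\,dz \sim \rho^{d_2-\gamma}$.
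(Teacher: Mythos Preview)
Your proof is correct and follows essentially the same route as the paper: apply Lemma~\ref{Lemma: Decomposition of balls for small radius} to reduce to a product of Euclidean balls, separate the $y'$ and $y''$ integrals, and then bound $\int_{|y''-a''|<Cr^2}|x''-y''|^{-\gamma}\,dy''$ uniformly in $x''$. The only cosmetic difference is that the paper handles this last integral via a two-case split ($|x''-a''|>2Cr^2$ versus $|x''-a''|\leq 2Cr^2$) rather than the layer-cake argument you sketch, but both are standard and yield the same $r^{2(d_2-\gamma)}$ bound.
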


\begin{proof}
Application of Lemma \ref{Lemma: Decomposition of balls for small radius} yields
\begin{align}
\label{Integration of weight with second layer}
    \int_{B(a, r)} \frac{dy}{|x''-y''|^{\gamma}} &\leq C \int_{|y'-a'|< r} dy' \int_{|y''-a''| < C r^2} \frac{dy''}{|x''-y''|^{\gamma}} .
\end{align}
Ley us divide the proof into following two cases.

Case I : $|x''-a''| > 2C r^2$. In this case using $|y''-a''|<C r^2$, we can see $|x''-y''|>\tfrac{1}{2}|x''-a''|$. Therefore we get
\begin{align*}
    \int_{|y''-a''| < C r^2} \frac{dy''}{|x''-y''|^{\gamma}} &\leq C |x''-a''|^{-\gamma} r^{2 d_2} \leq C r^{2(d_2-\gamma)} .
\end{align*}

Case II : $|x''-a''| \leq 2 C r^2$. In this case again $|y''-a''|<C r^2$ implies $|x''-y''| \leq 3 C r^2$. Using this fact and for $0\leq \gamma<d_2$ we see that
\begin{align*}
    \int_{|y''-a''| < C r^2} \frac{dy''}{|x''-y''|^{\gamma}} &\leq \int_{|x''-y''| < 3 C r^2} \frac{dy''}{|x''-y''|^{\gamma}} \leq C r^{2(d_2-\gamma)} .
\end{align*}

Combining case I, case II, and plugging them into \eqref{Integration of weight with second layer}, we can conclude the proof of the lemma.
\end{proof}

\begin{lemma}
\label{lemma: outside distance}
    Let $R> 0$. Then for any $N> Q$, where $Q= d_1 +2d_2$, we have
    \begin{align*}
        \int_{\varrho(x, y) \geq R} \frac{|f(y)| \, dy}{\big( 1 + \varrho(x, y)\big)^N} \leq C R^{-N + d_1 + d_2}\, \max\{R, |x'|\}^{d_2} \mathcal{M}f(x) ,
    \end{align*}
    where $\mathcal{M}$ denotes the Hardy-Littlewood maximal function. In particular, taking $f \equiv 1$ for $N>Q$, we get
    \begin{align*}
        \int_{\varrho(x, y) \geq R} \frac{dy}{\big( 1 + \varrho(x, y) \big)^N} \leq C R^{-N + d_1 + d_2}\, \max\{R, |x'|\}^{d_2}.
    \end{align*}
\end{lemma}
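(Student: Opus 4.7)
The plan is to carry out a standard dyadic decomposition of the exterior region $\{y : \varrho(x,y) \geq R\}$, then control each annular piece by the ball-volume estimate \eqref{Estimate : Ball volume} and the Hardy--Littlewood maximal function. Concretely, I would write
\begin{align*}
\int_{\varrho(x,y)\geq R} \frac{|f(y)|\,dy}{(1+\varrho(x,y))^N}
= \sum_{k=0}^{\infty} \int_{2^k R \leq \varrho(x,y) < 2^{k+1} R} \frac{|f(y)|\,dy}{(1+\varrho(x,y))^N},
\end{align*}
and on the $k$-th annulus use the pointwise lower bound $(1+\varrho(x,y))^N \geq (2^k R)^N$, so that each integral is bounded by $(2^k R)^{-N}\int_{B(x,2^{k+1}R)} |f(y)|\,dy$.

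The next step is to dominate $\int_{B(x,2^{k+1}R)}|f(y)|\,dy \leq |B(x,2^{k+1}R)|\,\mathcal{M}f(x)$ and then invoke \eqref{Estimate : Ball volume} to get $|B(x,2^{k+1}R)| \lesssim (2^k R)^{d_1+d_2}\max\{2^k R,|x'|\}^{d_2}$. A routine observation then gives $\max\{2^k R,|x'|\}^{d_2} \leq 2^{k d_2}\max\{R,|x'|\}^{d_2}$, so the $k$-th term is controlled by
\begin{align*}
C\, 2^{k(d_1+2d_2-N)}\, R^{d_1+d_2-N}\, \max\{R,|x'|\}^{d_2}\, \mathcal{M}f(x)
= C\, 2^{k(Q-N)}\, R^{-N+d_1+d_2}\, \max\{R,|x'|\}^{d_2}\, \mathcal{M}f(x).
\end{align*}

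Summing in $k$ then yields the claim, because the hypothesis $N>Q$ makes the geometric series $\sum_{k\geq 0} 2^{k(Q-N)}$ convergent. The ``in particular'' statement with $f\equiv 1$ follows immediately since $\mathcal{M}(\mathbf{1})(x)=1$.

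I do not expect any real obstacle in this proof: the only point that requires a moment of care is the split $\max\{2^k R,|x'|\}^{d_2}\leq 2^{kd_2}\max\{R,|x'|\}^{d_2}$ (which is the reason the factor $\max\{R,|x'|\}^{d_2}$ appears in the final bound rather than something worse), and the verification that $Q-N<0$ is exactly what is required for the geometric sum to converge, so the threshold $N>Q$ in the hypothesis is natural and sharp for this argument.
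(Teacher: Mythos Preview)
Your proposal is correct and follows essentially the same argument as the paper: dyadic annular decomposition, then bounding each piece by $(2^kR)^{-N}|B(x,2^{k+1}R)|\,\mathcal{M}f(x)$ and summing the resulting geometric series under the hypothesis $N>Q$. The only cosmetic difference is that the paper invokes the doubling property \eqref{Doubling property of balls} to control $|B(x,2^{k+1}R)|$, whereas you apply the ball-volume formula \eqref{Estimate : Ball volume} directly together with the explicit inequality $\max\{2^kR,|x'|\}^{d_2}\leq 2^{kd_2}\max\{R,|x'|\}^{d_2}$; these two routes are equivalent.
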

\begin{proof}
Decomposing the integral into annulus and using the fact \eqref{Doubling property of balls} we have
\begin{align*}
  \int_{\varrho(x, y) \geq R} \frac{|f(y)| \, dy}{\big( 1 + \varrho(x, y)\big)^N} &= \sum_{k = 0} ^\infty \int_{2^k R \leq \varrho(x, y) < 2^{k+ 1} R}  \frac{|f(y)| \, dy}{\big( 1 + \varrho(x, y)\big)^N} \\
  & \leq C \sum_{k = 0} ^\infty \frac{|B(x, 2^{k+1}R)|}{(2^k R)^N} \frac{1}{|B(x, 2^{k+1}R)|} \int_{\varrho(x, y) < 2^{k+ 1} R}  |f(y)| \, dy \\
  & \leq C R^{-N + d_1 + d_2} \max\{R, |x'|\}^{d_2} \mathcal{M}f(x) ,
\end{align*}
provided $N>Q$. 
\end{proof}

The following lemmas will be useful later in the proof of Theorem \ref{Theorem: Mixed norm estimate for first layer}. Let $\mathcal{M}^{|\cdot|_1}$ and $\mathcal{M}^{|\cdot|_2}$ denote the Hardy-Littlewood maximal function on $\mathbb{R}^{d_1}$ and $\mathbb{R}^{d_2}$ respectively. For $R>0$, we set $[R, |x'|] := R \max\{R, |x'|\}$.
\begin{lemma}
\label{Lemma: Integral of distance with second Hardy-Littlewood}
Let $R>0$. Then for any $\epsilon_1 >0$ and $N>d_2$,
\begin{align*}
    \int_{|x'-y'|>R} \int_{|x''-y''|>[R,|x'|]} \frac{|g(z)| \, dz'' \, dz'} {\big(1 + \varrho(x, z) \big)^N } &\leq C |B(x,1)| \int_{|x'-z'|>R} \frac{\mathcal{M}^{|\cdot|_2}(g(z', \cdot))(x'')}{(1 +  |x'-z'| )^{N-2d_2-\epsilon_1}} \, dz' .
\end{align*}
    
\end{lemma}

\begin{proof}
From \eqref{Distance equivalent expression} and for any $\epsilon_1>0$ we have
\begin{align*}
    & \int_{|x'-y'|>R} \int_{|x''-y''|>[R,|x'|]}  \frac{|g(z)| } {\big(1 + \varrho(x, z) \big)^N } \, dz'' \, dz' \\
    &\leq C \int_{|x'-y'|>R} \int_{|x''-y''|>[R,|x'|]} \frac{|g(z)| \, dz'' \, dz'}{(1 + |x'-z'| )^{N-d_2-\epsilon_1} (1+\frac{|x''-z''|}{|x'|+|z'|})^{d_2+\epsilon_1}}  \\
    &\leq C \int_{|x'-y'|>R} \frac{(|x'|+|z'|)^{d_2}}{(1 + |x'-z'| )^{N-d_2-\epsilon_1}}  \int_{\mathbb{R}^{d_2}} \frac{|g(z)|}{(1+|x''-z''|)^{d_2+\epsilon_1}} \, dz'' \, dz' \\
    &\leq C \int_{|x'-z'|>R} \frac{(|x'|+|z'|)^{d_2}}{(1 + |x'-z'| )^{N-d_2-\epsilon_1}} \mathcal{M}^{|\cdot|_2}(g(z', \cdot))(x'') \, dz' \\
    &\leq C \max\{1, |x'|\}^{d_2} \int_{|x'-z'|>R} \frac{\mathcal{M}^{|\cdot|_2}(g(z', \cdot))(x'')}{(1 +  |x'-z'| )^{N-2d_2-\epsilon_1}} \, dz' ,
\end{align*}
provided $N>d_2$.
    
\end{proof}

\begin{lemma}
\label{Lemma: Integral of weight for general h}
Let $R>0$. Then for any $N> d_2$ we have
\begin{align*}
    \int_{|x'-y'|\leq R} \int_{|x''-y''|>[R,|x'|]} \frac{|h(x)| \, dx'' \, dx'}{(1+\varrho(x,y))^N} &\leq C R^{-(N-2d_2)} |B(y,1)| \int_{|x'-y'|\leq R} \mathcal{M}^{|\cdot|_2}(h(x', \cdot))(y'') dx'.
\end{align*}
    
\end{lemma}

\begin{proof}
From \eqref{Distance equivalent expression}, similarly as in Lemma \ref{lemma: outside distance}, for $N>d_2$ we obtain
\begin{align*}
    & \int_{|x'-y'|\leq R} \int_{|x''-y''|>[R,|x'|]} \frac{|h(x)| }{(1+\varrho(x,y))^N}\, dx'' \, dx' \\
    &\leq C \int_{|x'-y'|\leq R} \Big( \int_{|x''-y''|>[R,|x'|]} \frac{|h(x)| }{(1+\frac{|x''-y''|}{|x'|+|y'|})^N}\, dx'' \Big)\, dx' \\
    &\leq C \int_{|x'-y'|\leq R} (|x'|+|y'|)^{d_2} \int_{|x''-y''|>[R,|x'|]/(|x'|+|y'|)} \frac{|h(x)|}{(1+|x''-y''|)^{N}}\, dx'' \, dx' \\
    &\leq C R^{-(N-d_2)} \int_{|x'-y'|\leq R} (|x'|+|y'|)^{d_2} \Big(\frac{|x'|+|y'|}{\max\{R,|x'|\}} \Big)^{N-d_2} \mathcal{M}^{|\cdot|_2}(h(x', \cdot))(y'') \, dx' \\
    &\leq C  R^{-(N-2d_2)} \max\{1,|y'|\}^{d_2} \int_{|x'-y'|\leq R} \mathcal{M}^{|\cdot|_2}(h(x', \cdot))(y'') \, dx' .
\end{align*}
This completes the proof of the lemma.    
\end{proof}

\section{Kernel estimates}\label{section 3}
In this section, we prove several kernel estimates for both linear and bilinear multipliers associated with the Grushin operators, which will be main ingredients in our proofs. Denote $T:= (-\Delta_{x''})^{1/2}$ for $x'' \in \mathbb{R}^{d_2}$. Note that $\mathcal{L}$ and $T$ commute strongly, so they admit a joint functional calculus on $L^2(\mathbb{R}^d)$ (see \cite{Martini_Sikora_Grushin_Weighted_Plancherel_2012}). This allows us to define the operator $G(\mathcal{L}, T)$ for every bounded Borel functions $G : \mathbb{R} \times \mathbb{R} \rightarrow \mathbb{C}$. Also let us set $T_1 = T \otimes I$ and $T_2 = I \otimes T$.

For $\mu \in \mathbb{N}^{d_1}$, the $\mu$-th Hermite function on $\mathbb{R}^{d_1}$ is given by $ \Phi_{\mu}(x') := \prod_{j=1}^{d_1} h_{\mu_j}(x'_j)$, where for $l \in \mathbb{N}$, the $l$-th Hermite function on $\mathbb{R}$ is given by $h_l(t) := (-1)^l (2^l l! \sqrt{\pi})^{-1/2} e^{t^2/2} \left(\frac{d}{dt} \right)^l (e^{-t^2}) $. Hermite functions are the eigenfunctions of the Hermite operator $H=-\Delta+|x|^2$ on $\mathbb{R}^{d_1}$ with eigenvalue $(2|\mu|+d_1)$. For $\lambda \neq 0$, we also define the scaled Hermite functions on $\mathbb{R}^{d_1}$ by $\Phi_{\mu}^{\lambda}(x') := |\lambda|^{d_1/4} \Phi_{\mu}(|\lambda|^{1/2}x')$. These functions form an orthonormal basis of $L^2(\mathbb{R}^{d_1})$ and serve as eigenfunctions of the scaled Hermite operator $H(\lambda):= (-\Delta_{x'} + |x'|^2 |\lambda|^2)$, that is, they satisfies
\begin{align}
\label{Eigen functions of scaled Hermite operator}
    H(\lambda) \Phi_{\mu}^{\lambda}(x') &= (2|\mu|+d_1)|\lambda| \Phi_{\mu}^{\lambda}(x') .
\end{align}
Recall that $[k]:=(2k+d_1)$. The projection $P_k^{\lambda}$ associated to the eigenvalue $[k]|\lambda|$ onto the eigenspace is given by $P_k^{\lambda}f(x') = \sum_{|\mu|=k} \langle f, \Phi_{\mu}^{\lambda} \rangle \Phi_{\mu}^{\lambda}(x')$.

Set $m(\eta_1, \eta_2)=(1-\eta_1-\eta_2)_{+}^{\alpha}$. Let $\mathcal{L}_1 = \mathcal{L} \otimes I$ and $\mathcal{L}_2 = I \otimes \mathcal{L}$. Then the operators $\mathcal{L}_1$ and $\mathcal{L}_2$ commute strongly (see \cite[Lemma 7.24]{Konrad_Unbounded_Selfadjoint_operator_2012}). Thus bivariate spectral theorem (see \cite[Theorem 5.21]{Konrad_Unbounded_Selfadjoint_operator_2012}) allows us to define the operator
\begin{align*}
    & m(\mathcal{L}_1, \mathcal{L}_2)(f \otimes g)(x_1,x_2) \\
    &= \frac{1}{(2\pi)^{2 d_2}} \int_{\mathbb{R}^{d_2}} \int_{\mathbb{R}^{d_2}} e^{i (\lambda_1 \cdot x_1'' + \lambda_2 \cdot x_2'')} \sum_{k_1, k_2=0}^{\infty} m([k_1]|\lambda_1|, [k_2]|\lambda_2|) P_{k_1}^{\lambda_1}f^{\lambda_1}(x_1') P_{k_2}^{\lambda_2} g^{\lambda_2}(x_2') \ d\lambda_1 \ d\lambda_2 .
\end{align*}

For $f,g \in \mathcal{E}(\mathbb{R}^d)$, one can see that the operator is well-defined on $\mathbb{R}^d \times \mathbb{R}^d$. Moreover, by Lebesgue dominated convergence theorem, $m(\mathcal{L}_1, \mathcal{L}_2)(f \otimes g)(x_1,x_2)$ is in fact continuous on $\mathbb{R}^d \times \mathbb{R}^d$. In particular, taking $x_1=x_2=x$, we see that $m(\mathcal{L}_1, \mathcal{L}_2)(f \otimes g)(x,x)$ coincides with the bilinear Bochner-Riesz operator, that is
\begin{align}
\label{Bilinear Bochner-Riesz and bivariate spectral equality}
    \mathcal{B}^{\alpha}(f,g)(x) &= m(\mathcal{L}_1, \mathcal{L}_2)(f \otimes g)(x,x) .
\end{align}

It is known  (see \cite[Proposition 3]{Martini_Sikora_Grushin_Weighted_Plancherel_2012}) that the integral kernel of the operator $\exp(-t\mathcal{L})$ satisfies Gaussian type upper bound; that is, there exists $b>0$ such that for all $t>0$ and $x,y \in \mathbb{R}^d$, we have
\begin{align}
\label{Heat kernel bound for Grushin}
    |\mathcal{K}_{\exp(-t\mathcal{L})}(x,y)| &\leq C |B(x, t^{1/2})|^{-1} \exp\big(-b \tfrac{\varrho(x,y)^2}{t} \big) .
\end{align}
So that for $t_1, t_2>0$, the operator $\exp(-t_1\mathcal{L}_1-t_2 \mathcal{L}_2)$ has $L^{\infty}$-bilinear kernel given by
\begin{align*}
    \mathcal{K}_{\exp(-t_1\mathcal{L}_1-t_2 \mathcal{L}_2)}(x, y, z) &= \mathcal{K}_{\exp(-t_1\mathcal{L})}(x,y) \mathcal{K}_{\exp(-t_2\mathcal{L})}(x,z) .
\end{align*}
Set $G(\eta_{1},\eta_{2}) = m(\eta_{1},\eta_{2})\exp(\eta_{1}+\eta_{2}) $. An application of Fourier inversion formula implies
\begin{align*}
    m(\eta_{1},\eta_{2}) &= \frac{1}{4 \pi^2}\int_{\mathbb{R}^2} \widehat{G}(\tau_1, \tau_2) \exp((i \tau_1-1) \eta_{1}) \exp((i \tau_2-1) \eta_{2}) \ d\tau_1 \ d\tau_2 .
\end{align*}
Then from \eqref{Bilinear Bochner-Riesz and bivariate spectral equality}, for $f,g \in \mathcal{E}(\mathbb{R}^d)$ we can write
\begin{align}
\label{Bilinear Bochner-Riesz operator in terms of kernel}
    \mathcal{B}^{\alpha}(f,g)(x) &= \frac{1}{4 \pi^2}\int_{\mathbb{R}^2} \widehat{G}(\tau_1, \tau_2) \exp((i \tau_1-1) \mathcal{L})f(x) \exp((i \tau_2-1) \mathcal{L})g(x) \ d\tau_1 \ d\tau_2 \\
    &\nonumber=: \int_{\mathbb{R}^d} \int_{\mathbb{R}^d} \mathcal{K}^{\alpha}(x, y, z) f(y) g(z) \ dy \ dz ,
\end{align}
where
\begin{align}
\label{Kernel expression in terms of heat kernel}
    \mathcal{K}^{\alpha}(x, y, z) &= \frac{1}{4 \pi^2}\int_{\mathbb{R}^2} \widehat{G}(\tau_1, \tau_2) \mathcal{K}_{\exp((i \tau_1-1) \mathcal{L})}(x,y) \mathcal{K}_{\exp((i \tau_2-1) \mathcal{L})}(x,z) \,d\tau_1 \, d\tau_2 .
\end{align}
Using \cite[Lemma 4.1]{Duong_Ouhabaz_Sikora_Sharp_Multiplier_2002}, one can show that $\mathcal{K}^{\alpha}(x, \cdot, \cdot) \in L^2(\mathbb{R}^d \times \mathbb{R}^d)$ for almost every $x \in \mathbb{R}^d$.

Choose two non-negative, increasing sequence of Borel functions $\{\zeta_{n_1}\}_{n_1 \in \mathbb{N}}$ and $\{\zeta_{n_2}\}_{n_2 \in \mathbb{N}}$ on $\mathbb{R}$ such that they are compactly supported in $\mathbb{R} \setminus \{0\}$ and converging pointwise on $\mathbb{R} \setminus \{0\}$ to the constant function $1$. Set
\begin{align}
\label{Introducing cutoff in the kernel expression}
    \mathcal{K}_{m_{n_1, n_2}(\mathcal{L}_1, \mathcal{L}_2, T_1, T_2)}(x, \cdot, \cdot) &:= \zeta_{n_1}(T_1) \zeta_{n_2}(T_2)(\mathcal{K}_{m(\mathcal{L}_1, \mathcal{L}_2)} (x, \cdot, \cdot)) ,
\end{align}
where $m(\eta_1, \eta_2)=(1-\eta_1-\eta_2)_{+}^{\alpha}$.

Then $\mathcal{K}_{m_{n_1, n_2}(\mathcal{L}_1, \mathcal{L}_2, T_1, T_2)}(x, \cdot, \cdot) \to \mathcal{K}_{m(\mathcal{L}_1, \mathcal{L}_2)} (x, \cdot, \cdot)$ in $L^2(\mathbb{R}^d \times \mathbb{R}^d)$ for almost all $x \in \mathbb{R}^d$. Moreover as in \cite[Proposition 5]{Martini_Sikora_Grushin_Weighted_Plancherel_2012} we can write
\begin{align}
\label{Explicit Kernel expression}
    &\mathcal{K}_{m_{n_1, n_2}(\mathcal{L}_1, \mathcal{L}_2, T_1, T_2)}(x, y, z) = \frac{1}{(2\pi)^{2d_2}} \int_{\mathbb{R}^{d_2}} \int_{\mathbb{R}^{d_2}} \sum_{k_1, k_2 = 0} ^{\infty} m_{n_1, n_2}([k_1]|\lambda_1|, [k_2]|\lambda_2|, |\lambda_1|, |\lambda_2|) \\
    &\nonumber \hspace{1cm} \Big(\sum_{|\mu_1|=k_1} \Phi_{\mu_1}^{\lambda_1}(x') \Phi_{\mu_1}^{\lambda_1}(y') \Big) \Big(\sum_{|\mu_2|=k_2} \Phi_{\mu_2}^{\lambda_2}(x') \Phi_{\mu_2}^{\lambda_2}(z') \Big) e^{i(x''-y'') \cdot \lambda_1} e^{i(x''-z'') \cdot \lambda_2} \ d\lambda_1 \ d\lambda_2 ,
\end{align}
where $m_{n_1, n_2}([k_1]|\lambda_1|, [k_2]|\lambda_2|, |\lambda_1|, |\lambda_2|) = m([k_1]|\lambda_1|, [k_2]|\lambda_2| ) \zeta_{n_1}(|\lambda_1|) \zeta_{n_2}(|\lambda_2|) $.

Now we will discuss various weighted Plancherel type estimates. Let us define $|\mathbf{P}|$ to be the operator of multiplication by $|x'|$.
\begin{proposition}\cite[Proposition 10]{Martini_Sikora_Grushin_Weighted_Plancherel_2012}
\label{Weighted Plancherel Martini Sikora}
Let $0\leq \gamma <d_2/2$. Then for all bounded compactly supported Borel functions $F: \mathbb{R} \to \mathbb{C}$,
\begin{align*}
    \| |\mathbf{P}|^{\gamma} \mathcal{K}_{F(\mathcal{L})}(\cdot, y) \|_{L^2(\mathbb{R}^d)}^2 &\leq C_{\gamma} \int_{0}^{\infty} |F(\eta)|^2 \, \eta^{d/2} \min\{\eta^{d_2/2-\gamma}, |y'|^{2\gamma-d_2} \} \frac{d\eta}{\eta} ,
\end{align*}
for almost all $y \in \mathbb{R}^d$.
\end{proposition}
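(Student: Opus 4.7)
The plan is to exploit the joint spectral decomposition of $\mathcal{L}$ via partial Fourier transform in $x''$ together with the Hermite expansion in $x'$, and to reduce the weighted Plancherel estimate to a corresponding one for the standard Hermite operator on $\mathbb{R}^{d_1}$. First I would write
\[
\mathcal{K}_{F(\mathcal{L})}(x, y) = \frac{1}{(2\pi)^{d_2}} \int_{\mathbb{R}^{d_2}} e^{i \lambda \cdot (x'' - y'')} \sum_{k=0}^{\infty} F([k]|\lambda|)\, \mathcal{K}_k^{\lambda}(x', y')\, d\lambda,
\]
and apply Plancherel in $x''$, which is permissible since the weight $|x'|^{\gamma}$ depends only on $x'$. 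This reduces the problem to estimating
\[
\int_{\mathbb{R}^{d_2}} \int_{\mathbb{R}^{d_1}} |x'|^{2\gamma} \Big|\sum_{k=0}^{\infty} F([k]|\lambda|)\, \mathcal{K}_k^{\lambda}(x', y')\Big|^2 dx'\, d\lambda.
\]

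Next, for each fixed $\lambda$, I would rescale $\tilde{x}' = |\lambda|^{1/2} x'$ and $\tilde{y}' = |\lambda|^{1/2} y'$ using the identity $\Phi_{\mu}^{\lambda}(z') = |\lambda|^{d_1/4}\Phi_{\mu}(|\lambda|^{1/2}z')$. This converts the $H(\lambda)$-problem into one for the standard Hermite operator $H = -\Delta + |x|^2$ on $\mathbb{R}^{d_1}$: the inner integral becomes
\[
|\lambda|^{d_1/2 - \gamma} \int_{\mathbb{R}^{d_1}} |\tilde{x}'|^{2\gamma} \Big|\sum_{\mu \in \mathbb{N}^{d_1}}  F((2|\mu|+d_1)|\lambda|)\, \Phi_{\mu}(\tilde{x}')\, \Phi_{\mu}(\tilde{y}')\Big|^2 d\tilde{x}',
\]
and the task reduces to a weighted Hermite Plancherel of the form
\[
\int_{\mathbb{R}^{d_1}} |x|^{2\gamma}|\mathcal{K}_{\psi(H)}(x,y)|^2\, dx \lesssim \sum_k |\psi(2k+d_1)|^2\, \omega_\gamma(k,y).
\]

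The two terms in the minimum in the proposition correspond to two complementary estimates of $\omega_\gamma$. A uniform-in-$y$ bound comes from the spectral-cluster estimate $\sum_{|\mu|=k}|\Phi_{\mu}(y)|^2 \lesssim k^{d_1/2-1}$ combined with integrating the weight $|x|^{2\gamma}$ against $|\mathcal{K}_k(x,y)|^2$; this yields a growth like $k^{(d_1+d_2)/2 - \gamma - 1}$. A second, $y$-dependent bound exploits the Gaussian concentration of $\Phi_{\mu}$ in the classical region $|x| \lesssim \sqrt{|\mu|}$ (extractable from Mehler's formula or sharp Hermite asymptotics), allowing the factor $|x|^{2\gamma}$ to be effectively replaced by $|y|^{2\gamma}$ on the dominant region. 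After rescaling back and passing to polar coordinates $\lambda = r\omega$ with $d\lambda = r^{d_2-1}\, dr\, d\sigma(\omega)$, and changing variables via $\eta = (2k+d_1)r$, the first estimate produces the $\eta^{d_2/2 - \gamma}$ factor while the second produces the $|y'|^{2\gamma - d_2}$ factor, and taking the minimum yields the proposition.

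The main obstacle is the weighted Hermite Plancherel itself: since $|x|^{2\gamma}$ does not commute with $H$, the Hermite eigenspaces fail to be orthogonal under the weighted inner product, so one must control cross-terms $\int |x|^{2\gamma}\mathcal{K}_{k_1}(x,y)\mathcal{K}_{k_2}(x,y)\, dx$. A clean route is via Mehler's formula for $e^{-tH}(x,y)$ together with subordination: the explicit Gaussian structure of the heat kernel permits direct computation of its weighted $L^2$ integrals, and a general $\psi(H)$ is then accessed by representing $\psi$ as a Laplace or Fourier superposition of heat semigroup operators. The hypothesis $\gamma < d_2/2$ enters precisely to guarantee convergence (and positivity of the relevant exponents) when the two Hermite-level bounds are reinserted into the $\lambda$-integral.
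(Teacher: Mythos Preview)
The paper does not prove this proposition; it is quoted verbatim from Martini--Sikora (Proposition~10 of \cite{Martini_Sikora_Grushin_Weighted_Plancherel_2012}) with no argument supplied. The closest in-paper argument is the proof of the bilinear, unweighted analogue Proposition~\ref{Bilinear weighted Plancherel}, which carries out exactly the reduction you describe --- partial Fourier transform in $x''$, Hermite expansion, rescaling $\tilde x' = |\lambda|^{1/2}x'$, passage to polar coordinates and the change of variables $\eta = [k]|\lambda|$ --- and then explicitly defers the remaining Hermite-level estimate to ``Lemma~9'' of the same reference. Your plan therefore matches the template the paper itself uses.

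One remark on the cross-term issue you raise. In Proposition~\ref{Bilinear weighted Plancherel} the $L^2$ norm is taken in the \emph{unweighted} variables $(y,z)$ with $x$ fixed, so orthogonality of the $\Phi_\mu^\lambda$ collapses the sum to the diagonal and the quantity $H_k(|\lambda|^{1/2}x') = \sum_{|\mu|=k}|\Phi_\mu(|\lambda|^{1/2}x')|^2$ appears for free. In Proposition~\ref{Weighted Plancherel Martini Sikora} the weight $|x'|^{2\gamma}$ sits on the integration variable, so cross-terms genuinely survive; controlling them is exactly the content of the cited Lemma~9. Your proposed Mehler/subordination attack on that step is a reasonable alternative, though likely more laborious than the direct pointwise Hermite bounds used in the reference. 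The exponent bookkeeping you sketch --- the two regimes producing $\eta^{d_2/2-\gamma}$ and $|y'|^{2\gamma-d_2}$ after the change of variables, with $\gamma<d_2/2$ ensuring integrability --- is correct.
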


From the above proposition, one can immediately deduce the following result, see also \cite[Theorem 3.4]{Chen_Ouhabaz_Bochner_Riesz_Grushin_2016}.
\begin{proposition}
\label{Weighted restriction estimate}
Let $F$ be a Borel function supported in $[0,1]$. Then, for $0\leq \gamma< d_2/2$, we have
\begin{align*}
    \| |\mathbf{P}|^{\gamma} F(\mathcal{L}) f\|_{L^2(\mathbb{R}^d)} &\leq C_{\gamma} \|F\|_{L^2(\mathbb{R})} \|f\|_{L^1(\mathbb{R}^d)}.
\end{align*}
 Moreover, if $|a'|>4r$, then 
\begin{align*}
    \| |\mathbf{P}|^{\gamma} F(\mathcal{L}) (\chi_{B(a,r)}f)\|_{L^2(\mathbb{R}^d)} &\leq C_{\gamma} |a'|^{\gamma-d_2/2} \|F\|_{L^2(\mathbb{R})} \|f\|_{L^1(\mathbb{R}^d)} ,
\end{align*}
where $\chi_{B(a,r)}$ denotes the characteristic function of the ball $B(a,r)$ of $\mathbb{R}^d$.
\end{proposition}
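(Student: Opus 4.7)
The plan is to pass to the integral-kernel representation of $F(\mathcal{L})$ and reduce both estimates to the weighted $L^2$-bound on the kernel already supplied by Proposition \ref{Weighted Plancherel Martini Sikora}. Since
\[
|\mathbf{P}|^{\gamma} F(\mathcal{L}) f(x) = \int_{\mathbb{R}^d} |x'|^{\gamma}\, \mathcal{K}_{F(\mathcal{L})}(x,y)\, f(y)\, dy ,
\]
Minkowski's integral inequality gives
\[
\bigl\| |\mathbf{P}|^{\gamma} F(\mathcal{L}) f \bigr\|_{L^2(\mathbb{R}^d)} \leq \int_{\mathbb{R}^d} \bigl\| |\mathbf{P}|^{\gamma} \mathcal{K}_{F(\mathcal{L})}(\cdot, y) \bigr\|_{L^2(\mathbb{R}^d)}\, |f(y)|\, dy ,
\]
so the whole problem reduces to controlling $\| |\mathbf{P}|^{\gamma} \mathcal{K}_{F(\mathcal{L})}(\cdot, y) \|_{L^2}$ pointwise in $y$ by quantities depending only on $\|F\|_{L^2}$ and, in the localised version, on $|y'|$.

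For the first inequality, since $F$ is supported in $[0,1]$, Proposition \ref{Weighted Plancherel Martini Sikora} reads
\[
\bigl\| |\mathbf{P}|^{\gamma} \mathcal{K}_{F(\mathcal{L})}(\cdot, y) \bigr\|_{L^2}^2 \leq C_\gamma \int_0^1 |F(\eta)|^2\, \eta^{d/2 - 1}\, \min\{\eta^{d_2/2 - \gamma},\, |y'|^{2\gamma - d_2}\}\, d\eta .
\]
Estimating the minimum by its first entry $\eta^{d_2/2 - \gamma}$ produces the weight $\eta^{Q/2 - 1 - \gamma}$, and from $\gamma < d_2/2$ together with $d \geq 2$ (automatic since $d_1, d_2 \geq 1$) one checks $Q/2 - 1 - \gamma \geq 0$. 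The weight is therefore bounded on $[0,1]$, so $\| |\mathbf{P}|^{\gamma} \mathcal{K}_{F(\mathcal{L})}(\cdot, y) \|_{L^2} \leq C_\gamma \|F\|_{L^2}$ uniformly in $y$, and the first claim follows from the Minkowski bound.

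For the second inequality I instead use the other entry of the minimum. If $y \in B(a,r)$ and $|a'| > 4r$, then \eqref{Distance equivalent expression} gives $|y' - a'| \leq \varrho(y,a) \leq r \leq |a'|/4$, hence $|y'| \geq 3|a'|/4$. Since $2\gamma - d_2 < 0$, this yields $|y'|^{2\gamma - d_2} \leq C\, |a'|^{2\gamma - d_2}$ uniformly for $y \in B(a,r)$. Applying Proposition \ref{Weighted Plancherel Martini Sikora} with the bound $\min \leq |y'|^{2\gamma - d_2}$ and using $\eta^{d/2 - 1} \leq 1$ on $[0,1]$ gives
\[
\bigl\| |\mathbf{P}|^{\gamma} \mathcal{K}_{F(\mathcal{L})}(\cdot, y) \bigr\|_{L^2} \leq C_\gamma\, |a'|^{\gamma - d_2/2}\, \|F\|_{L^2}
\]
for every $y \in B(a,r)$. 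Plugging this into the Minkowski step with $f$ replaced by $\chi_{B(a,r)} f$ yields the localised estimate. There is no serious obstacle beyond choosing which entry of the minimum to use in each regime, and the assumption $\gamma < d_2/2$ is exactly what makes the negative power $|y'|^{2\gamma - d_2}$ integrable and, jointly with $d\geq 2$, allows one to absorb the factor $\eta^{d/2-1}$ at small $\eta$.
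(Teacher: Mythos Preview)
Your proof is correct and follows essentially the same route as the paper: Minkowski's integral inequality reduces to a pointwise bound on $\| |\mathbf{P}|^{\gamma} \mathcal{K}_{F(\mathcal{L})}(\cdot, y) \|_{L^2}$, which is then controlled via Proposition~\ref{Weighted Plancherel Martini Sikora} by choosing the appropriate entry of the minimum in each case. Your additional remarks on why the exponent $Q/2-1-\gamma$ is nonnegative and why $\eta^{d/2-1}\leq 1$ on $[0,1]$ make the argument slightly more explicit than the paper's version, but the substance is identical.
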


\begin{proof}
An application of Minkowski's integral inequality yields
\begin{align*}
    \| |\mathbf{P}|^{\gamma} F(\mathcal{L}) f\|_{L^2(\mathbb{R}^d)} &\leq \int_{\mathbb{R}^d} |f(y)| \Big( \int_{\mathbb{R}^d} |x'|^{2\gamma} |\mathcal{K}_{F(\mathcal{L})}(x, y)|^2 \ dx \Big)^{1/2} dy .
\end{align*}
Therefore, using Proposition \ref{Weighted Plancherel Martini Sikora}, for all $\gamma \in [0, d_2/2)$ we get
\begin{align*}
    \| |\mathbf{P}|^{\gamma} F(\mathcal{L}) f\|_{L^2(\mathbb{R}^d)} &\leq C_{\gamma} \int_{\mathbb{R}^d} |f(y)| \ dy  \Big( \int_{0}^{1} |F(\eta)|^2 \eta^{d/2-1} \eta^{d_2/2-\gamma} \  d\eta \Big)^{1/2} \\
    & \leq C_{\gamma} \|F\|_{L^2(\mathbb{R})} \|f\|_{L^1(\mathbb{R}^d)}.
\end{align*}
Now, if $|a'|>4r$ and $y \in B(a,r)$, then $|y'| \geq \frac{3}{4}|a'|$. Hence, Proposition \ref{Weighted Plancherel Martini Sikora} gives
\begin{align*}
    \| |\mathbf{P}|^{\gamma} F(\mathcal{L}) \chi_{B(a,r)}f\|_{L^2(\mathbb{R}^d)} &\leq C_{\gamma} \int_{B(a,r)} |f(y)| \Big( \int_{0}^{1} |F(\eta)|^2 \eta^{d/2-1} |y'|^{2\gamma-d_2} \ d\eta \Big)^{1/2} \ dy \\
    & \leq C_{\gamma} |a'|^{\gamma-d_2/2} \|F\|_{L^2(\mathbb{R})} \|f\|_{L^1(\mathbb{R}^d)}.
\end{align*}
This completes the proof of the proposition.
\end{proof}

The following proposition is a generalization of \cite[Proposition 10]{Martini_Sikora_Grushin_Weighted_Plancherel_2012} to the bilinear setup.
\begin{proposition}
\label{Bilinear weighted Plancherel}
For all bounded Borel compactly supported functions $G: \mathbb{R}^2 \to \mathbb{C}$ we have
\begin{align*}
    & \|\mathcal{K}_{G(\mathcal{L}_1, \mathcal{L}_2)} (x, \cdot, \cdot)\|_{L^2(\mathbb{R}^d \times \mathbb{R}^d)}^2 \\
    &\leq C \int_{0}^{\infty} \int_{0}^{\infty} |G(\eta_1, \eta_2)|^2\, \eta_1^{d/2} \min\{\eta_1^{d_2/2}, |x'|^{-d_2} \}\, \eta_2^{d/2} \min\{\eta_2^{d_2/2}, |x'|^{-d_2} \} \ \frac{d\eta_1}{\eta_1} \ \frac{d\eta_2}{\eta_2} ,
\end{align*}
for almost all $x \in \mathbb{R}^d$.
\end{proposition}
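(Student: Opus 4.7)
The plan is to adapt the argument of Proposition~\ref{Weighted Plancherel Martini Sikora} to the bivariate functional calculus, exploiting the tensor-like factorisation of the kernel in the $y$ and $z$ variables. First, I would write out the kernel $\mathcal{K}_{G(\mathcal{L}_1, \mathcal{L}_2)}(x, y, z)$ explicitly via the spectral resolution, obtaining an expression analogous to \eqref{Kernel expression} but with the multiplier $G$ in place of $(1-\eta_1-\eta_2)_+^{\alpha}$:
\begin{align*}
\mathcal{K}_{G(\mathcal{L}_1, \mathcal{L}_2)}(x, y, z)
&= \frac{1}{(2\pi)^{2d_2}} \iint_{\mathbb{R}^{d_2} \times \mathbb{R}^{d_2}} e^{i(x''-y'') \cdot \lambda_1} e^{i(x''-z'') \cdot \lambda_2} \\
&\quad \times \sum_{k_1, k_2 = 0}^{\infty} G([k_1]|\lambda_1|, [k_2]|\lambda_2|)\, \mathcal{K}_{k_1}^{\lambda_1}(x', y')\, \mathcal{K}_{k_2}^{\lambda_2}(x', z')\, d\lambda_1\, d\lambda_2.
\end{align*}
Applying Plancherel in the variables $y''$ and $z''$ then recasts the $L^2$-norm squared as a four-fold integral in $(y', z', \lambda_1, \lambda_2)$ of the squared modulus of the inner double sum.

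Next, I would exploit the $L^2$-orthogonality relation
\[
\int_{\mathbb{R}^{d_1}} \mathcal{K}_{k}^{\lambda}(x', y')\, \mathcal{K}_{k'}^{\lambda}(x', y')\, dy' = \delta_{k, k'}\, \mathcal{K}_k^{\lambda}(x', x'),
\]
which is immediate from the orthonormality of the scaled Hermite functions $\{\Phi_\mu^\lambda\}$. Using this separately in the $y'$ and $z'$ integrations makes every off-diagonal cross term vanish and reduces the quantity under consideration to the diagonal form
\[
C \iint_{\mathbb{R}^{d_2} \times \mathbb{R}^{d_2}} \sum_{k_1, k_2 = 0}^{\infty} \bigl|G([k_1]|\lambda_1|, [k_2]|\lambda_2|)\bigr|^2\, \mathcal{K}_{k_1}^{\lambda_1}(x', x')\, \mathcal{K}_{k_2}^{\lambda_2}(x', x')\, d\lambda_1\, d\lambda_2.
\]

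Finally, since this expression factors as a product over $(\lambda_1, k_1)$ and $(\lambda_2, k_2)$, I would invoke Fubini and apply, twice, the scalar diagonal estimate implicit in the proof of Proposition~\ref{Weighted Plancherel Martini Sikora} (at $\gamma = 0$), namely
\[
\int_{\mathbb{R}^{d_2}} \sum_{k = 0}^{\infty} \bigl|F([k]|\lambda|)\bigr|^2\, \mathcal{K}_k^{\lambda}(x', x')\, d\lambda \;\leq\; C \int_0^{\infty} |F(\eta)|^2\, \eta^{d/2} \min\{\eta^{d_2/2},\, |x'|^{-d_2}\}\, \frac{d\eta}{\eta},
\]
first in $(\lambda_1, k_1)$ with $F = G(\cdot, [k_2]|\lambda_2|)$, and then in $(\lambda_2, k_2)$. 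Chaining the two bounds produces exactly the claimed right-hand side. The main delicate point I anticipate is the off-diagonal cancellation in the second step and verifying the Fubini hypotheses for the sum--integral interchange (guaranteed here by the compact support of $G$); once these bookkeeping issues are handled, the multiplicative structure of both sides makes the iteration of the linear weighted Plancherel estimate essentially automatic, so no genuinely new analytic ingredient beyond Proposition~\ref{Weighted Plancherel Martini Sikora} is needed.
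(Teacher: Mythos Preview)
Your proposal is correct and follows essentially the same route as the paper: both arguments write out the kernel via the spectral resolution, use Plancherel in $(y'',z'')$ together with the orthogonality of the scaled Hermite functions in $(y',z')$ to reduce to the diagonal expression $\sum_{k_1,k_2}|G([k_1]|\lambda_1|,[k_2]|\lambda_2|)|^2\,\mathcal{K}_{k_1}^{\lambda_1}(x',x')\,\mathcal{K}_{k_2}^{\lambda_2}(x',x')$, and then feed this into the scalar estimate underlying Proposition~\ref{Weighted Plancherel Martini Sikora} (equivalently, Lemma~9 and Proposition~10 of \cite{Martini_Sikora_Grushin_Weighted_Plancherel_2012}). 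The only cosmetic difference is that you iterate the one-variable bound via Fubini, whereas the paper performs the change of variables in both $(\lambda_1,\lambda_2)$ simultaneously before invoking \cite{Martini_Sikora_Grushin_Weighted_Plancherel_2012}; these are equivalent presentations of the same computation.
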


\begin{proof}
Similarly as in \eqref{Introducing cutoff in the kernel expression}, if we define
\begin{align}
\label{Introducing cutoff in kernel in L2 estimate}
    \mathcal{K}_{G_{n_1, n_2}(\mathcal{L}_1, \mathcal{L}_2, T_1, T_2)}(x, \cdot, \cdot) &:= \zeta_{n_1}(T_1) \zeta_{n_2}(T_2)(\mathcal{K}_{G(\mathcal{L}_1, \mathcal{L}_2)} (x, \cdot, \cdot))
\end{align}
then $\mathcal{K}_{G_{n_1, n_2}(\mathcal{L}_1, \mathcal{L}_2, T_1, T_2)}(x, \cdot, \cdot) \to \mathcal{K}_{G(\mathcal{L}_1, \mathcal{L}_2)} (x, \cdot, \cdot)$ in $L^2(\mathbb{R}^d \times \mathbb{R}^d)$ for almost all $x \in \mathbb{R}^d$. Therefore because of Fatou's lemma, enough to prove the estimate for $\mathcal{K}_{G_{n_1, n_2}(\mathcal{L}_1, \mathcal{L}_2, T_1, T_2)}(x, \cdot, \cdot)$.

Recall that $\Phi_{\mu}^{\lambda}(x') = |\lambda|^{d_1/4} \Phi_{\mu}(|\lambda|^{1/2}x')$. Let us set $H_{k_i}(x') = \sum_{|\mu|=k} |\Phi_{\mu}(x')|^2$. Therefore from \eqref{Explicit Kernel expression} and using orthogonality, we obtain
\begin{align*}
    & \|\mathcal{K}_{G_{n_1, n_2}(\mathcal{L}_1, \mathcal{L}_2, T_1, T_2)}(x, \cdot, \cdot)\|_{L^2(\mathbb{R}^d \times \mathbb{R}^d)}^2 \\
    &= \frac{1}{(2\pi)^{2d_2}} \int_{\mathbb{R}^{d_2}} \int_{\mathbb{R}^{d_2}} \sum_{k_1, k_2 = 0}^{\infty} |G_{n_1, n_2}([k_1]|\lambda_1|, [k_2]|\lambda_2|, |\lambda_1|, |\lambda_2|)|^2 \prod_{i=1,2} \Big( \sum_{|\mu_i|=k_i} |\Phi_{\mu_i}^{\lambda_i}(x')|^2 \Big) \, d\lambda_1 \, d\lambda_2 \\
    &= \frac{1}{(2\pi)^{2d_2}} \int_{\mathbb{R}^{d_2}} \int_{\mathbb{R}^{d_2}} \sum_{k_1, k_2 = 0}^{\infty} |G_{n_1, n_2}([k_1]|\lambda_1|, [k_2]|\lambda_2|,|\lambda_1|, |\lambda_2|)|^2 \\
    &\hspace{8cm} \prod_{i=1,2} \Big( |\lambda_i|^{d_1/2} H_{k_i}(|\lambda_i|^{1/2}x') \Big) \ d\lambda_1 \ d\lambda_2 .
\end{align*} 
After a change of variables, the above expression can be dominated by
\begin{align*}
    & C \int_{0}^{\infty} \int_{0}^{\infty} \sum_{k_1, k_2 = 0}^{\infty} |G_{n_1, n_2}(\eta_1, \eta_2, \eta_1/[k_1], \eta_2/[k_2])|^2 \prod_{i=1,2} \left[\frac{\eta_i^{Q/2}}{[k_i]^{Q/2}} H_{k_i}\left(\frac{\eta_i^{1/2}x'}{[k_i]^{1/2}}\right) \right] \, \frac{d\eta_1}{\eta_1} \, \frac{d\eta_2}{\eta_2} .
\end{align*}
The rest of the argument follows the same lines as Proposition 10 in \cite{Martini_Sikora_Grushin_Weighted_Plancherel_2012}, with the aid of Lemma 9 also from \cite{Martini_Sikora_Grushin_Weighted_Plancherel_2012}.

\end{proof}

Suppose $\Theta : \mathbb{R} \to [0,1]$ be a compactly supported smooth function supported in $[1/2, 2]$ such that 
\begin{align}
\label{Definition of chi}
    \sum_{M \in \mathbb{Z}} \Theta_M(\tau) =1 \quad \text{with}\quad \Theta_M(\tau) = \Theta(2^{M} \tau).
\end{align}

Let $G : \mathbb{R} \times \mathbb{R} \to \mathbb{C}$ is a bounded Borel function supported in $ [0, 1] \times [0,1]$. Define the function $G_{M_1, M_2} : \mathbb{R} \times \mathbb{R} \times \mathbb{R} \times \mathbb{R} \to \mathbb{C}$ by setting
\begin{align}
\label{Introducing cutoff in bilinear multiplier}
    G_{M_1, M_2}(\eta_1, \eta_2, \tau_1, \tau_2) &:= G(\eta_1, \eta_2) \Theta_{M_1}(\tau_1) \Theta_{M_2}(\tau_2) .
\end{align}

For $s_1, s_2\geq 0$, let us define the Sobolev space of product type $L^2_{s_1,s_2}(\mathbb{R}^2)$ consists of all $G \in \mathcal{S}'(\mathbb{R}^2)$ such that
\begin{align*}
    \|G\|_{L^2_{s_1,s_2}(\mathbb{R}^2)} &:= \Big( \int_{\mathbb{R}^2} (1+|\xi_1|^2)^{s_1} (1+|\xi_2|^2)^{s_2} |\widehat{G}(\xi_1, \xi_2)|^2 \, d\xi_1 \, d\xi_2 \Big)^{1/2} < \infty .
\end{align*}

The following proposition can be seen as a bilinear analogue of Lemma 11 from \cite{Martini_Muller_Sharp_Multiplier_Grushin_2014}.
\begin{proposition}
\label{Prop: Truncated restriction with |x-y| power}
With the notation as in \eqref{Introducing cutoff in bilinear multiplier}, for all $N_1, N_2 \geq 0$ and almost all $x \in \mathbb{R}^d$ we have
\begin{align}
\label{second layer weighted Plancherel}
    & \int_{\mathbb{R}^d} \int_{\mathbb{R}^d} |x''-y''|^{2N_1} |x''-z''|^{2N_2} |\mathcal{K}_{G_{M_1, M_2}(\mathcal{L}_1, \mathcal{L}_2, T_1, T_2)}(x,y,z)|^2 \ dy \ dz \\
    &\nonumber \leq C\, 2^{M_1(2N_1- d_2)} 2^{M_2(2 N_2-d_2)} \|G\|_{L^{2}_{N_1,N_2}(\mathbb{R}^2)}^2 .
\end{align}
\end{proposition}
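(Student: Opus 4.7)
The plan is to extend the strategy of \cite[Lemma 11]{Martini_Muller_Sharp_Multiplier_Grushin_2014} to the bilinear setting. The mechanism is that Plancherel in the second-layer variables $(y'', z'')$ converts the polynomial weights $|x''-y''|^{2N_1}|x''-z''|^{2N_2}$ into differentiation of order $(N_1, N_2)$ in the spectral dual variables $(\lambda_1, \lambda_2)\in\mathbb{R}^{d_2}\times\mathbb{R}^{d_2}$. Since $\Theta_{M_j}(|\lambda_j|)$ localizes $|\lambda_j|$ to a shell of thickness $\sim 2^{-M_j}$, the $d\lambda_j$-integration yields a volume factor $2^{-M_j d_2}$; combined with the factor $2^{2M_j N_j}$ arising from the differentiation, this reproduces the announced exponent $2^{M_j(2N_j-d_2)}$.

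By the bivariate functional calculus, the kernel admits the representation
\begin{align*}
  \mathcal{K}_{G_{M_1, M_2}(\mathcal{L}_1, \mathcal{L}_2, T_1, T_2)}(x, y, z) &= \frac{1}{(2\pi)^{2d_2}}\iint_{\mathbb{R}^{d_2}\times\mathbb{R}^{d_2}} e^{i(x''-y'')\cdot\lambda_1+i(x''-z'')\cdot\lambda_2}\,\Theta_{M_1}(|\lambda_1|)\Theta_{M_2}(|\lambda_2|) \\
  &\quad\times \sum_{k_1, k_2} G([k_1]|\lambda_1|, [k_2]|\lambda_2|)\,\mathcal{K}_{k_1}^{\lambda_1}(x', y')\,\mathcal{K}_{k_2}^{\lambda_2}(x', z')\, d\lambda_1\, d\lambda_2.
\end{align*}
Changing variables $u = y''-x''$, $v = z''-x''$ removes the $x''$-dependence of the phase, and Plancherel in $(u,v)$ converts the weights $|u|^{2N_1}|v|^{2N_2}$ to fractional operators $D_{\lambda_1}^{N_1}D_{\lambda_2}^{N_2}$ of Bessel-potential type acting on the spectral-side symbol. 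The left-hand side of \eqref{second layer weighted Plancherel} is thereby reduced to
\begin{align*}
  C\iint_{\mathbb{R}^{d_1}\times\mathbb{R}^{d_1}}\iint_{\mathbb{R}^{d_2}\times\mathbb{R}^{d_2}} \bigl|D_{\lambda_1}^{N_1}D_{\lambda_2}^{N_2} A(\lambda_1, \lambda_2, y', z')\bigr|^2\, d\lambda_1\, d\lambda_2\, dy'\, dz',
\end{align*}
where $A$ is the summand-symbol appearing above (with the exponentials stripped off).

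For integer $N_1, N_2$, Leibniz distributes $D_{\lambda_1}^{N_1}D_{\lambda_2}^{N_2}$ among the three $\lambda_j$-dependent factor groups: $G$, $\Theta_{M_j}$, and the spectral projections $\mathcal{K}_{k_j}^{\lambda_j}$. Each derivative hitting $\Theta_{M_j}$ produces a factor $2^{M_j}$ by scaling; each one hitting $G$ via the chain rule produces a factor $[k_j]$, bounded by $2^{M_j}$ on the joint support since $[k_j]|\lambda_j|\leq 1$ and $|\lambda_j|\sim 2^{-M_j}$; derivatives on $\mathcal{K}_{k_j}^{\lambda_j}$ are expanded using the scaling $\Phi_\mu^\lambda = |\lambda|^{d_1/4}\Phi_\mu(|\lambda|^{1/2}\,\cdot\,)$ and the Hermite recurrence relations, producing linear combinations of spectral projections at nearby levels with coefficients dominated by $|\lambda_j|^{-1}\sim 2^{M_j}$. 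For each term in the Leibniz expansion, the $(y', z')$-integration is carried out via Proposition \ref{Bilinear weighted Plancherel} applied to a modified symbol $\widetilde G$ that absorbs the derivatives of $G$ and the combinatorial coefficients; after the substitution $\eta_j = [k_j]|\lambda_j|$ and summation in $k_j$, the resulting $L^2$-bound on $\widetilde G$ is dominated by $2^{-M_1 d_2}2^{-M_2 d_2}\|G\|_{L^2_{N_1, N_2}(\mathbb{R}^2)}^2$, which combined with the $2^{2M_j N_j}$ from the Leibniz factors yields the announced bound.

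For non-integer $N_1, N_2\geq 0$, the estimate follows by complex interpolation between the integer cases via Stein's interpolation theorem, applied to the analytic family of Bessel-type operators $(N_1, N_2)\mapsto D_{\lambda_1}^{N_1}D_{\lambda_2}^{N_2}$. The main obstacle is the bookkeeping in the Leibniz expansion when derivatives act on the spectral projections $\mathcal{K}_{k_j}^{\lambda_j}$: the $\lambda_j$-dependence of $\Phi_\mu^{\lambda_j}$ appears both through the normalization $|\lambda_j|^{d_1/4}$ and through the rescaled argument $|\lambda_j|^{1/2}(\,\cdot\,)$, so the derivatives generate cross terms with coefficients that must be balanced carefully against the constraints $[k_j]|\lambda_j|\leq 1$ and $|\lambda_j|\sim 2^{-M_j}$ in order to recover exactly $2^{M_j(2N_j-d_2)}$ rather than a larger power.
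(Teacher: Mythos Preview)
Your overall strategy---Plancherel in $(y'',z'')$ to convert the weights into $\lambda_j$-derivatives, then Leibniz, Hermite recurrence, and orthogonality---is the right one, and it matches the spirit of the paper's argument. However, the execution diverges from the paper in two places, and one of them hides a genuine gap.

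First, the paper does not attack both weights simultaneously. It reduces to the one-sided case $(N_1,N_2)=(N,0)$, proves that, obtains $(0,N)$ by symmetry, and then interpolates between these endpoint families (citing \cite[proof of Proposition 4.3]{Martini_Sharp_Multiplier_Kohn_Laplacian_2017}) to reach arbitrary $(N_1,N_2)$. This has the virtue that in the $(N,0)$ case only the $\lambda_1$-variable is differentiated; the $(k_2,\lambda_2)$-dependence is carried as a parameter, and the hard bookkeeping---the action of $\partial_{\lambda_1}^\beta$ on the Hermite projections---is outsourced wholesale to \cite[Lemma 11]{Martini_Muller_Sharp_Multiplier_Grushin_2014}, which returns an expansion indexed by a finite set $I_\beta$ together with the precise combinatorial data $(\alpha^\iota,\beta^\iota,J_\iota,\nu_\iota,\dots)$ needed to close the estimate. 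Your sketch attempts that same bookkeeping by hand for both variables at once; this is not impossible, but it is exactly the ``main obstacle'' you flag in your last paragraph, and you do not actually carry it out.

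Second, and more concretely, your appeal to Proposition~\ref{Bilinear weighted Plancherel} for the $(y',z')$-integration does not fit. That proposition bounds $\|\mathcal{K}_{G(\mathcal{L}_1,\mathcal{L}_2)}(x,\cdot,\cdot)\|_{L^2}^2$ by an integral involving $\min\{\eta_j^{d_2/2},|x'|^{-d_2}\}$; it gives an $x'$-dependent bound and applies to kernels of the form $G(\mathcal{L}_1,\mathcal{L}_2)$, whereas after differentiating the spectral projections your kernel is no longer of that form. The paper bypasses this entirely: once \cite[Lemma~11]{Martini_Muller_Sharp_Multiplier_Grushin_2014} has delivered the expansion, it integrates out $(y',z')$ using orthonormality of the $\Phi_\mu^\lambda$ and then controls the remaining $x'$-dependence by the uniform diagonal bound $\sum_{|\mu|=k}\Phi_\mu^\lambda(x')^2\leq C|\lambda|^{d_1/2}[k]^{d_1/2-1}$. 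That pointwise Hermite estimate, not Proposition~\ref{Bilinear weighted Plancherel}, is what produces the flat (i.e., $x'$-independent) bound with the correct power $2^{-M_j d_2}$ after the change of variables $r_j=[k_j]|\lambda_j|$ and summation in $k_j$.
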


\begin{proof}
First note that enough to prove \eqref{second layer weighted Plancherel} for $(N_1, N_2)=(N,0)$ for any $N \geq 0$. Since for $(N_1, N_2)=(0, N)$, the proof is almost similar with obvious modification. Then combining these two estimates and an interpolation argument (see \cite[proof of Proposition 4.3]{Martini_Sharp_Multiplier_Kohn_Laplacian_2017}) gives the required estimates.

Therefore in order to prove \eqref{second layer weighted Plancherel} for $(N_1, N_2)=(N,0)$, again following the arguments of \cite[Lemma 11]{Martini_Muller_Sharp_Multiplier_Grushin_2014}, it suffices to prove
\begin{align*}
    \int_{\mathbb{R}^d} \int_{\mathbb{R}^d} | (x''-y'')^{\beta} \mathcal{K}_{G_{M_1, M_2}(\mathcal{L}_1, \mathcal{L}_2, T_1, T_2)}(x,y,z)|^2 \ dy \ dz &\leq C 2^{M_1(2|\beta|-d_2)} 2^{-M_2 d_2} \|G\|_{L^{2}_{|\beta|,0}(\mathbb{R}^2)}^2 ,
\end{align*}
for all $\beta \in \mathbb{N}^{d_2}$ with $|\beta|=N$.

For $\mu_1 \in \mathbb{N}^{d_1}$ and $\lambda_1 \in \mathbb{R}^{d_2} \setminus \{0\}$, define
\begin{align*}
    m_{M_1, M_2, [k_2],|\lambda_2|}(\mu_1, \lambda_1) &= G_{M_1, M_2}([|\mu_1|]|\lambda_1|, [k_2]|\lambda_2|, |\lambda_1|, |\lambda_2|) 
\end{align*}
and $m_{M_1, M_2, [k_2],|\lambda_2|}(\mu_1, \lambda_1)=0$ for $\mu_1 \in \mathbb{Z}^{d_1} \setminus \mathbb{N}^{d_1}$. Let $\Tilde{m}_{M_1, M_2, [k_2],|\lambda_2|}$ is a smooth extension of $m_{M_1, M_2, [k_2],|\lambda_2|}$ to $\mathbb{R}^{d_1} \times (\mathbb{R}^{d_2} \setminus \{0\})$. So that from \cite[Lemma 11]{Martini_Muller_Sharp_Multiplier_Grushin_2014} and (\ref{Explicit Kernel expression}), we get
\begin{align}
\label{Application of Lemma 11 of maritini Muller}
    & \int_{\mathbb{R}^d} \int_{\mathbb{R}^d} |(x''-y'')^{\beta} \mathcal{K}_{G_{M_1, M_2}(\mathcal{L}_1, \mathcal{L}_2, T_1, T_2)}(x,y,z)|^2 \ dz\ dy \\
    &\nonumber \leq C \int_{\mathbb{R}^{d_2}} \sum_{k_2 = 0}^{\infty} \sum_{\iota \in I_{\beta}} \int_{J_{\iota}} \int_{\mathbb{R}^{d_2}} \sum_{k_1 \geq k_{\iota}} \sum_{|\mu_1|=k_1} |\lambda_1|^{2|\beta^{\iota}| -2|\beta|} [k_1]^{2|\alpha^{\iota}|} |\partial_t^{\alpha^{\iota}} \partial_{\lambda_1}^{\beta^{\iota}} \Tilde{m}_{M_1, M_2, [k_2],|\lambda_2|}(\mu_1-s, \lambda_1)|^2 \\
    &\nonumber \hspace{7cm} \times \Phi_{\mu_1}^{\lambda_1}(x')^2 \  d\lambda_1 \ d\nu_{\iota}(s) \Big( \sum_{|\mu_2|=k_2} \Phi_{\mu_2}^{\lambda_2}(x')^2 \Big) \, d\lambda_2 .
\end{align}
where $\beta^{\iota} \in \mathbb{N}^{d_2}$, $\beta^{\iota} \leq \beta$, $\alpha^{\iota}, \Tilde{\alpha}^{\iota} \in \mathbb{N}^{d_1}$, $|\alpha^{\iota}|+|\beta^{\iota}| \leq |\beta|$, $\Tilde{\gamma}^{\iota} := (\Tilde{\gamma}^{\iota}_1, \ldots, \Tilde{\gamma}^{\iota}_{d_1}) $, $k_{\iota} := \Tilde{\gamma}^{\iota}_1 + \cdots + \Tilde{\gamma}^{\iota}_{d_1}$, $\Tilde{\gamma}^{\iota}_j:= \max\{0, \gamma^{\iota}_j + 2 r^{\iota}_j \} \geq 2(r^{\iota}_j - \Tilde{\alpha^{\iota}_j}+\alpha^{\iota}_j) $, $\gamma_j^{\iota}:= 2\max\{0,1-\rho_1^j, \ldots, 1-\rho_{u_j}^{j} \} \geq 2(\alpha_j^{\iota}-\Tilde{\alpha}_j^{\iota})$ for all $j \in \{1, \ldots, d_1\}$, $u_1+\cdots +u_{d_1} \leq |\beta|-|\beta^{\iota}|$, $J_{\iota}= \prod_{j=1}^{d_1}[2(r^{\iota}_j- \Tilde{\alpha^{\iota}_j}), 2(r^{\iota}_j - \Tilde{\alpha^{\iota}_j}+\alpha^{\iota}_j)] $, $r^{\iota} \in \mathbb{Z}^{d_1}$ with $|r^{\iota}|\leq |\beta|$, $\nu_{\iota}$ is a probability measure on $J_{\iota}$, and $I_{\beta}$ is a finite set.

Let $[t]= (2t_1+\cdots+2t_{d_1}+d_1)$ for all $t \in \mathbb{R}^{d_1}$. A smooth extension $\Tilde{m}_{M_1, M_2, [k_2],|\lambda_2|}$ of $m_{M_1, M_2, [k_2],|\lambda_2|}$ is given by
\begin{align*}
    \Tilde{m}_{M_1, M_2, [k_2],|\lambda_2|}(t, \lambda_1) = G([t]|\lambda_1|, [k_2]|\lambda_2|)\Theta(2^{M_1}|\lambda_1|)\Theta(2^{M_2}|\lambda_2|),
\end{align*}
for $\lambda \in \mathbb{R}^{d_2} \setminus \{0\}$ and $t \in (-1/2, \infty)^{d_1}$. Let us denote $G^{(v,0)}(\eta_1, \eta_2) = \partial^{v}_{\eta_1}G(\eta_1, \eta_2)$ for $v \in \mathbb{N}$. By induction, one can check that
\begin{align*}
    & \partial_t^{\alpha^{\iota}} \partial_{\lambda}^{\beta^{\iota}} \Tilde{m}_{M_1, M_2, [k_2],|\lambda_2|}(t, \lambda_1) \\
    &= \sum_{\substack{0\leq a \leq |\alpha^{\iota}| \\ 0\leq b \leq |\beta^{\iota}|}} 2^{M_1(|\beta^{\iota}|-b)} \Theta^{(|\beta^{\iota}|-b)}(2^{M_1} |\lambda_1|) \Psi_{\alpha^{\iota},a,b}([t]) |\lambda_1|^a G^{(a+b,0)}([t]|\lambda_1|, [k_2]|\lambda_2|) \Theta(2^{M_2}|\lambda_2|)
\end{align*}
for all $t \in [0, \infty)^{d_1}$, where $\Psi_{\alpha^{\iota},a,b} : \mathbb{R} \to \mathbb{C}$ are smooth functions, homogeneous of degree $a+b-|\alpha^{\iota}|$. Therefore, using the fact $|\lambda_1| [t] \leq 1$ on the support of $G$ and $|\lambda_1| \sim 2^{-M_1}$ on the support of $\theta$, we obtain
\begin{align*}
    & |\partial_{\lambda}^{\beta^{\iota}} \partial_t^{\alpha^{\iota}} \Tilde{m}_{M_1, M_2, [k_2],|\lambda_2|}(t, \lambda_1)|^2 \\
    &\hspace{2cm} \leq C \sum_{v=0}^{|\alpha^{\iota}|+|\beta^{\iota}|} 2^{2{M_1}(|\beta^{\iota}|-|\alpha^{\iota}|)} |G^{(v,0)}([t]|\lambda_1|, [k_2]|\lambda_2|)|^2 \chi(2^{M_1} |\lambda_1|) \Theta(2^{M_2}|\lambda_2|) ,
\end{align*}
for all $t \in [0, \infty)^{d_1}$, where $\chi$ is a characteristic function of $[1/2,2]$.

Note that $\sum_{|\mu|=k} \Phi_{\mu}^{\lambda}(x')^2 \leq C |\lambda|^{d_1/2} [k]^{d_1/2-1}$ for all $x \in \mathbb{R}^{d_1}$ (see \cite[Lemma 3.3]{Niedorf_Bochner_Riesz_Grushin_2022}). Let $\Tilde{J}_{\iota}$ be the image of $J_{\iota}$ under the map $(s_1, \ldots, s_{d_1}) \mapsto (s_1+\cdots+s_{d_1})$, and $\Tilde{\nu}_{\iota}$ is the corresponding push-forward of $\nu_{\iota}$ on $\Tilde{J}_{\iota}$, then $k_{\iota} \geq \max \Tilde{J}_{\iota}$ and   \eqref{Application of Lemma 11 of maritini Muller} becomes
\begin{align*}
    & C \int_{\mathbb{R}^{d_2}} \sum_{k_2 = 0}^{\infty} \sum_{\iota \in I_{\beta}} \sum_{v=0}^{|\alpha^{\iota}|+|\beta^{\iota}|} \sum_{k_1 \geq k_{\iota}} 2^{2M_1(|\beta^{\iota}|-|\alpha^{\iota}|)} \int_{\Tilde{J}_{\iota}} \int_{\mathbb{R}^{d_2}} |\lambda_1|^{2|\beta^{\iota}|-2|\alpha^{\iota}| -2|\beta|} ([k_1]|\lambda_1|)^{2|\alpha^{\iota}|} |\chi(2^{M_1} |\lambda_1|)|^2 \\
    &\times |\Theta(2^{M_2} |\lambda_2|)|^2 |G^{(v,0)}([k_1-s]|\lambda_1|, [k_2]|\lambda_2|)|^2 |\lambda_1|^{d_1/2} [k_1]^{d_1/2-1} d\lambda_1 \ d\Tilde{\nu}_{\iota}(s) |\lambda_2|^{d_1/2} [k_2]^{d_1/2-1} d\lambda_2 \\ 
    &\leq C \int_{\mathbb{R}^{d_2}} \sum_{k_2 = 0}^{\infty} \sum_{\iota \in I_{\beta}} \sum_{v=0}^{|\alpha^{\iota}|+|\beta^{\iota}|} \sum_{k_1 \geq k_{\iota}} 2^{2M_1|\beta|} \int_{\Tilde{J}_{\iota}} \int_{\mathbb{R}^{d_2}} |G^{(v,0)}([k_1-s]|\lambda_1|, [k_2]|\lambda_2|)|^2 |\chi(2^{M_1} |\lambda_1|)|^2  \\
    & \hspace{8cm} |\Theta(2^{M_2} |\lambda_2|)|^2 |\lambda_1| |\lambda_2| \  d\lambda_1 \ d\Tilde{\nu}_{\iota}(s)\, d\lambda_2 .
\end{align*}   
Now changing into polar co-ordinates and making change of variables, the above expression can be bounded by
\begin{align*}
    & C \sum_{k_2 = 0}^{\infty} \sum_{\iota \in I_{\beta}} \sum_{v=0}^{|\alpha^{\iota}|+|\beta^{\iota}|} \sum_{k_1 \geq k_{\iota}} 2^{2M_1|\beta|} 2^{-M_1(d_2+1)} 2^{-M_2(d_2+1)}  \int_{\Tilde{J}_{\iota}} \int_{0}^{\infty} \int_{0}^{\infty} |G^{(v,0)}(r_1, r_2)|^2 \\
    & \hspace{6cm} |\chi(2^{M_1} [k_1-s]^{-1} r_1)|^2 |\Theta(2^{M_2} [k_2]^{-1} r_2)|^2 \  \frac{dr_1}{r_1} \frac{dr_2}{r_2} \ d\Tilde{\nu}_{\iota}(s) \\
    &\leq C \sum_{\iota \in I_{\beta}} \sum_{v=0}^{|\alpha^{\iota}|+|\beta^{\iota}|} 2^{2M_1|\beta|} 2^{-M_1(d_2+1)} 2^{-M_2(d_2+1)}  \int_{\Tilde{J}_{\iota}} \int_{0}^{\infty} \int_{0}^{\infty} \sum_{[k_1-s] \leq 2 \cdot 2^{M_1} r_1} \sum_{[k_2] \leq 2 \cdot 2^{M_2} r_2} \\
    & \hspace{8cm} |G^{(v,0)}(r_1, r_2)|^2 \, \frac{dr_1}{r_1} \frac{dr_2}{r_2} \ d\Tilde{\nu}_{\iota}(s) \\
    &\leq C 2^{M_1(2|\beta|-d_2)} 2^{-M_2 d_2} \|G\|_{L^{2}_{|\beta|,0}(\mathbb{R}^2)}^2 .
\end{align*}
This completes the proof of the proposition.
\end{proof}

From the above proposition, we immediately obtain the following proposition.
\begin{proposition}
\label{prop: second layer weighted Plancherel}
Let $G : \mathbb{R} \times \mathbb{R} \to \mathbb{C}$ is a bounded Borel function supported in $ [0, 1] \times [0,1]$. Then for $0\leq \gamma_1, \gamma_2<d_2/2$ and almost all $x \in \mathbb{R}^d$ we have
\begin{align*}
    \left(\int_{\mathbb{R}^d} \int_{\mathbb{R}^d} |x''-y''|^{2\gamma_1} |x''-z''|^{2\gamma_2} |\mathcal{K}_{G(\mathcal{L}_1, \mathcal{L}_2)}(x,y,z)|^2 \ dy \ dz \right)^{1/2} &\leq C\, \|G\|_{L^{2}_{\gamma_1,\gamma_2}(\mathbb{R}^2)} .
\end{align*}

\end{proposition}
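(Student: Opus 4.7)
My plan is to reduce this weighted $L^2$ kernel bound to the band-limited estimate already established as Proposition~\ref{Prop: Truncated restriction with |x-y| power}, via a dyadic decomposition in the operators $T_1$ and $T_2$. The driving observation is that dyadically resolving in $T_i$ (which commutes with $\mathcal{L}_1$ and $\mathcal{L}_2$) does not change $G(\mathcal{L}_1, \mathcal{L}_2)$, but it localizes $|\lambda_i| \sim 2^{-M_i}$ and so makes accessible the quantitative gain $2^{M_i(2N_i - d_2)}$ in Proposition~\ref{Prop: Truncated restriction with |x-y| power}, taken with $N_i = \gamma_i$.

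Concretely, I would first use the partition $\sum_M \Theta_M \equiv 1$ from \eqref{Definition of chi} and the joint functional calculus of $(\mathcal{L}_1, \mathcal{L}_2, T_1, T_2)$ to write
\[
G(\mathcal{L}_1, \mathcal{L}_2) \;=\; \sum_{M_1, M_2 \in \mathbb{Z}} G_{M_1, M_2}(\mathcal{L}_1, \mathcal{L}_2, T_1, T_2).
\]
The support condition $\supp G \subseteq [0,1]^2$ together with $[k_i] \geq d_1$ forces $|\lambda_i| \leq 1/d_1$ on the spectral support, so only indices with $M_1, M_2 \geq M_0$, for some constant $M_0 = M_0(d_1)$, contribute. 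Since the weighted $L^2$ norm $\|\cdot\|_{L^2(|x''-y''|^{2\gamma_1}|x''-z''|^{2\gamma_2}\,dy\,dz)}$ is a genuine norm, Minkowski's inequality then lets me pass it past the sum. Applying Proposition~\ref{Prop: Truncated restriction with |x-y| power} with $(N_1, N_2) = (\gamma_1, \gamma_2)$ to each band-limited piece and taking square roots would produce a factor of $C\, 2^{M_1(\gamma_1 - d_2/2)}\, 2^{M_2(\gamma_2 - d_2/2)} \|G\|_{L^{2}_{\gamma_1, \gamma_2}}$ for each summand. Summing the resulting geometric double series, which converges precisely because $\gamma_1, \gamma_2 < d_2/2$, would then deliver the stated estimate.

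I do not anticipate a deep obstacle, since the hard analytic content is already packaged into Proposition~\ref{Prop: Truncated restriction with |x-y| power}. The main minor technicalities will be (i) justifying the convergence of the operator decomposition and the interchange of summation with the weighted $L^2$ kernel norm, which I would handle by testing against $f \otimes g$ with $f, g \in \mathcal{E}(\mathbb{R}^d)$ and invoking strong operator convergence on $L^2$ together with the a priori $L^2(w)$-finiteness of each band-limited piece, and (ii) verifying that the cutoff $M_0$ depends only on $d_1$ (independently of $G$), so that the implicit constant in the final bound depends only on $\gamma_1, \gamma_2, d_1, d_2$.
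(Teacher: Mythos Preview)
Your proposal is correct and follows essentially the same approach as the paper's proof: decompose via the dyadic partition $\sum_{M_1,M_2}\Theta_{M_1}\Theta_{M_2}\equiv 1$, apply Minkowski's inequality to pass the weighted $L^2$ norm inside the double sum, invoke Proposition~\ref{Prop: Truncated restriction with |x-y| power} with $(N_1,N_2)=(\gamma_1,\gamma_2)$ on each piece, and sum the resulting geometric series using $\gamma_i<d_2/2$. The only cosmetic difference is that the paper simply takes the sums over $M_1,M_2\geq 0$ rather than introducing an explicit cutoff $M_0(d_1)$.
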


\begin{proof}
Similar to Proposition \ref{Bilinear weighted Plancherel}, enough to prove the estimate for $\mathcal{K}_{G_{n_1, n_2}(\mathcal{L}_1, \mathcal{L}_2, T_1, T_2)}(x, y, z)$ (see \eqref{Introducing cutoff in kernel in L2 estimate}) replacing $\mathcal{K}_{G(\mathcal{L}_1, \mathcal{L}_2)}(x,y,z)$. Using \eqref{Definition of chi} we can write
\begin{align*}
    G_{n_1, n_2}(\eta_1, \eta_2, \tau_1, \tau_2) &= \sum_{M_1=0}^{\infty} \sum_{M_2=0}^{\infty} (G_{n_1, n_2})_{M_1, M_2}(\eta_1, \eta_2, \tau_1, \tau_2)
\end{align*}
where $(G_{n_1, n_2})_{M_1, M_2}(\eta_1, \eta_2, \tau_1, \tau_2) = G_{n_1, n_2}(\eta_1, \eta_2, \tau_1, \tau_2) \Theta_{M_1}(\tau_1) \Theta_{M_2}(\tau_2)$. Therefore applying Proposition \ref{Prop: Truncated restriction with |x-y| power} for $0\leq \gamma_1, \gamma_2<d_2/2$ we obtain
\begin{align*}
    & \left(\int_{\mathbb{R}^d} \int_{\mathbb{R}^d} |x''-y''|^{2\gamma_1} |x''-z''|^{2\gamma_2} |\mathcal{K}_{G_{n_1, n_2}(\mathcal{L}_1, \mathcal{L}_2, T_1, T_2)}(x,y,z)|^2 \ dy \ dz \right)^{1/2} \\
    &\leq \sum_{M_1=0}^{\infty} \sum_{M_2=0}^{\infty} \left( \int_{\mathbb{R}^d} \int_{\mathbb{R}^d} |x''-y''|^{2\gamma_1} |x''-z''|^{2\gamma_2} |\mathcal{K}_{(G_{n_1, n_2})_{M_1, M_2}(\mathcal{L}_1, \mathcal{L}_2, T_1, T_2)}(x,y,z)|^2 \ dy \ dz \right)^{1/2} \\
    &\leq C \sum_{M_1=0}^{\infty} \sum_{M_2=0}^{\infty} 2^{M_1(\gamma_1- d_2/2)} 2^{M_2(\gamma_2-d_2/2)} \|G\|_{L^{2}_{\gamma_1,\gamma_2}(\mathbb{R}^2)} \\
    &\leq C \|G\|_{L^{2}_{\gamma_1,\gamma_2}(\mathbb{R}^2)} .
\end{align*}
    
\end{proof}

Let $F : \mathbb{R} \to \mathbb{C}$ be a bounded, Borel function supported in $ [0, 1]$. With the help of $\Theta$, as defined in \eqref{Definition of chi} and for $M \in \mathbb{N}$, let $F_M : \mathbb{R} \times \mathbb{R} \to \mathbb{C}$ be given by 
\begin{align*}
    F_M(\eta, \tau) = F(\eta) \Theta_{M}(\tau) .
\end{align*}


\begin{proposition}
\label{Lemma: Martini_Mullar_Weighted_Plancherel}
Let $F : \mathbb{R} \to \mathbb{C}$ be a bounded Borel function supported in $ [0, 1]$ and $F_M$ be defined as above. Then for all $ N \geq 0$ and almost all $y \in \mathbb{R}^d$,
\begin{align*}
    \left( \int_{\mathbb{R}^d}  |x''-y''|^{2N} |\mathcal{K}_{F_{M}(\mathcal{L}, T)}(x,y)|^2 \ dx \right)^{1/2} &\leq C 2^{M(N-d_2/2)} \|F\|_{L^{2}_N(\mathbb{R})} .
\end{align*}
Moreover, we also have
\begin{align*}
    \| F_M(\mathcal{L}, T) f\|_{L^2 } \leq C 2^{-M d_2/2} \|F\|_{L^2(\mathbb{R})} \, \|f\|_{L^1} .
\end{align*} 
\end{proposition}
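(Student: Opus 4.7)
The plan is to follow the same strategy used in the bilinear Proposition \ref{Prop: Truncated restriction with |x-y| power}, specialized to one variable, which is conceptually simpler. The first estimate is the substantive one; the $L^1 \to L^2$ bound is a quick consequence.

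For the weighted Plancherel estimate, I would first note that it suffices to prove the inequality for integer $N = |\beta|$ with $\beta \in \mathbb{N}^{d_2}$ and the monomial weight $(x''-y'')^\beta$, since fractional orders follow by interpolation (as in Martini \cite{Martini_Sharp_Multiplier_Kohn_Laplacian_2017}). Writing the kernel $\mathcal{K}_{F_M(\mathcal{L},T)}(x,y)$ explicitly via the spectral decomposition of $\mathcal{L}$ together with the Fourier transform in $x''$, applying Plancherel in the $x''$ variable and invoking Lemma~11 of \cite{Martini_Muller_Sharp_Multiplier_Grushin_2014} to handle the $\lambda$-differentiations that arise from the monomial $(x''-y'')^\beta$, I would obtain a bound in terms of a finite sum of integrals involving $\partial_\lambda^{\beta^\iota} \partial_t^{\alpha^\iota} \widetilde{m}_{M,[k]}(\mu - s, \lambda)$, where $\widetilde{m}_{M,[k]}(t,\lambda) = F([t]|\lambda|)\Theta(2^M|\lambda|)$ is a smooth extension.

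Next I would compute these derivatives. Since $F_M$ is a product $F(\eta)\Theta_M(\tau)$, by induction one gets
\begin{align*}
\partial_t^{\alpha^\iota}\partial_\lambda^{\beta^\iota}\widetilde{m}_{M,[k]}(t,\lambda)
&= \sum_{\substack{0\le a \le |\alpha^\iota|\\ 0\le b\le |\beta^\iota|}} 2^{M(|\beta^\iota|-b)} \Theta^{(|\beta^\iota|-b)}(2^M|\lambda|)\,\Psi_{\alpha^\iota,a,b}([t])\,|\lambda|^a F^{(a+b)}([t]|\lambda|),
\end{align*}
with $\Psi_{\alpha^\iota,a,b}$ homogeneous of degree $a+b-|\alpha^\iota|$. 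On the support of the cutoff one has $|\lambda|\sim 2^{-M}$ and $[t]|\lambda|\lesssim 1$, so the derivative is bounded, up to a constant, by $\sum_v 2^{2M(|\beta^\iota|-|\alpha^\iota|)}|F^{(v)}([t]|\lambda|)|^2 \chi(2^M|\lambda|)$. Combining with the standard pointwise bound $\sum_{|\mu|=k}\Phi_\mu^\lambda(x')^2 \le C|\lambda|^{d_1/2}[k]^{d_1/2-1}$ (Lemma 3.3 of \cite{Niedorf_Bochner_Riesz_Grushin_2022}), summing in $k$ and passing to polar coordinates in $\lambda$ then to the variable $r=[k]|\lambda|$, the powers collapse to yield $2^{M(2|\beta|-d_2)} \|F\|_{L^2_{|\beta|}(\mathbb{R})}^2$, which is exactly the claimed bound.

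For the second estimate, I would apply Minkowski's integral inequality to write
\begin{align*}
\|F_M(\mathcal{L},T) f\|_{L^2} &\le \int_{\mathbb{R}^d} |f(y)|\, \|\mathcal{K}_{F_M(\mathcal{L},T)}(\cdot,y)\|_{L^2(\mathbb{R}^d)}\, dy,
\end{align*}
and then apply the first estimate with $N=0$ to the inner $L^2$ norm, giving the uniform bound $C\,2^{-Md_2/2}\|F\|_{L^2(\mathbb{R})}$, which yields the desired $L^1\to L^2$ inequality.

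The main obstacle is the bookkeeping in the derivative computation and the application of the Martini--Müller lemma: one must carefully track the product structure of $F_M$ through the $t$- and $\lambda$-derivatives, exploit the scaling $|\lambda|\sim 2^{-M}$ on the support of $\Theta_M$ together with $[t]|\lambda|\lesssim 1$ on the support of $F$, and arrange the change of variables so that the factors of $2^M$ combine to give precisely the exponent $N - d_2/2$. The rest is routine; the argument is a direct (and mildly easier) analogue of what is already carried out in Proposition \ref{Prop: Truncated restriction with |x-y| power}.
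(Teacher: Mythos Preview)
Your proposal is correct and follows essentially the same route as the paper: the paper explicitly states that the first estimate is the one-variable analogue of Proposition~\ref{Prop: Truncated restriction with |x-y| power} (reduction to integer weights, Lemma~11 of \cite{Martini_Muller_Sharp_Multiplier_Grushin_2014}, the derivative identity, the Hermite pointwise bound, and the polar-coordinates change of variables), and that the second estimate follows by Minkowski's integral inequality exactly as you do.
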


The first part of the above result follows from a similar argument as in Proposition \ref{Prop: Truncated restriction with |x-y| power}, and is even more simpler in this case. While the proof of second estimate follows from an application of Minkowski's integral inequality (see Proposition \ref{Weighted restriction estimate}). One can also compare this results with \cite[Lemma 11]{Martini_Muller_Sharp_Multiplier_Grushin_2014}, \cite[Theorem 3.4]{Niedorf_Bochner_Riesz_Grushin_2022}.

Similar to Proposition \ref{prop: second layer weighted Plancherel}, as a consequence of the above proposition, we have the following result.
\begin{proposition}
\label{Lemma: Martini_Mullar_Weighted_Plancherel without cutoff}
Let $F : \mathbb{R} \to \mathbb{C}$ be a bounded Borel function supported in $ [0, 1]$. Then for all $0 \leq \gamma < d_2/2$ and almost all $y \in \mathbb{R}^d$,
\begin{align*}
    \left( \int_{\mathbb{R}^d}  |x''-y''|^{2\gamma} |\mathcal{K}_{F(\mathcal{L})}(x,y)|^2 \ dx \right)^{1/2} &\leq C \|F\|_{L^{2}_{\gamma}(\mathbb{R})} .
\end{align*}
\end{proposition}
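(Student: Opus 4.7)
The plan is to reduce the estimate to Proposition \ref{Lemma: Martini_Mullar_Weighted_Plancherel} by means of the Littlewood--Paley type decomposition \eqref{Definition of chi} applied along the $T$-variable of the joint functional calculus of $(\mathcal{L},T)$. Setting $F_M(\eta,\tau):= F(\eta)\,\Theta_M(\tau)$ as in the paragraph preceding Proposition \ref{Lemma: Martini_Mullar_Weighted_Plancherel}, we would formally write
\begin{align*}
F(\mathcal{L}) \;=\; \sum_{M\in\mathbb Z} F_M(\mathcal L, T),
\end{align*}
which is a valid identity in the joint spectral calculus since $\sum_{M} \Theta_M\equiv 1$.

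The first step would be to observe that only one-sided tail of $M$ actually contributes. Indeed, since $F$ is supported in $[0,1]$ and the joint spectrum of $(\mathcal L, T)$ is contained in $\{([k]|\lambda|, |\lambda|): k\in\mathbb N,\ \lambda\in\mathbb R^{d_2}\}$ with $[k]=2k+d_1\geq d_1$, the constraint $[k]|\lambda|\leq 1$ forces $|\lambda|\leq 1/d_1$. Hence $F_M(\mathcal L, T)=0$ unless the support of $\Theta_M(\tau)=\Theta(2^M\tau)\subseteq [2^{-M-1},2^{-M+1}]$ intersects $[0,1/d_1]$, which happens only for $M\geq M_0$ for some fixed integer $M_0$ depending only on $d_1$. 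By absorbing finitely many terms into the constant we may assume the sum runs over $M\geq 0$.

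Next I would apply Minkowski's integral inequality in the $L^2(\mathbb R^d,dx)$ norm (with the weight $|x''-y''|^{2\gamma}$) to break apart the dyadic pieces:
\begin{align*}
\left(\int_{\mathbb R^d}|x''-y''|^{2\gamma}|\mathcal K_{F(\mathcal L)}(x,y)|^2\,dx\right)^{1/2}
\;\leq\;\sum_{M=0}^{\infty}\left(\int_{\mathbb R^d}|x''-y''|^{2\gamma}|\mathcal K_{F_M(\mathcal L,T)}(x,y)|^2\,dx\right)^{1/2}.
\end{align*}
Invoking Proposition \ref{Lemma: Martini_Mullar_Weighted_Plancherel} with $N=\gamma$ on each term bounds the $M$-th summand by $C\,2^{M(\gamma-d_2/2)}\|F\|_{L^2_\gamma(\mathbb R)}$. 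Since $\gamma<d_2/2$, the exponent $\gamma-d_2/2$ is strictly negative, so the resulting geometric series in $M$ converges and yields the claim.

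The only conceptual point that needs care is the spectral-support argument justifying the reduction to a one-sided sum; once this is in hand, the proof is formally identical to that of Proposition \ref{prop: second layer weighted Plancherel}, simply with the bivariate cutoff $\Theta_{M_1}\otimes\Theta_{M_2}$ replaced by the univariate one $\Theta_M$, so I do not expect any substantive obstacle beyond this bookkeeping.
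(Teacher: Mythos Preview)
Your proposal is correct and follows exactly the approach the paper intends: the paper states that Proposition~\ref{Lemma: Martini_Mullar_Weighted_Plancherel without cutoff} is obtained from Proposition~\ref{Lemma: Martini_Mullar_Weighted_Plancherel} in the same way that Proposition~\ref{prop: second layer weighted Plancherel} is obtained from Proposition~\ref{Prop: Truncated restriction with |x-y| power}, namely by decomposing $F(\mathcal L)=\sum_{M\geq 0}F_M(\mathcal L,T)$, applying the weighted Plancherel bound to each piece, and summing the resulting geometric series in $2^{M(\gamma-d_2/2)}$. Your explicit justification of why the sum can be taken over $M\geq 0$ (via the spectral constraint $|\lambda|\leq 1/d_1$) is a point the paper leaves implicit.
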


\section{Proof of Theorem \ref{Bilinear Bochner-Riesz main theorem}}
\label{Section: proof of banach case}  
Here, we  present the proof of Theorem \ref{Bilinear Bochner-Riesz main theorem}, which is divided into few steps.

\subsection{Step I: Decomposition of \texorpdfstring{$\mathcal{B}^{\alpha}$.}{}}
\label{subsection: step 1 decomposition of Bj1 case}
Let $f, g \in \mathcal{E}(\mathbb{R}^d)$. We consider a dyadic decomposition of the bilinear Bochner-Riesz multiplier $(1-\eta_1-\eta_2 )_{+}^{\alpha}$:
\begin{align}
\label{Dyadic decomposition of bilinear multiplier}
    (1-\eta_1-\eta_2 )_{+}^{\alpha} &= \sum_{j\in \mathbb{Z}} (1-\eta_1-\eta_2 )_+ ^{\alpha} \varphi \big(2^j (1 - \eta_1 - \eta_2)\big),
\end{align}
where  $\varphi \in C_c  ^{\infty}(\frac{1}{2}, 2)$ is a non-negative function such that  for all $t >0$, $\sum_{j \in \mathbb{Z}} \varphi(2^j t) = 1$ holds. Then setting $\varphi_j^{\alpha} (\eta_1, \eta_2) :=  (1-\eta_1-\eta_2 )_+ ^{\alpha} \varphi \big(2^j (1 - \eta_1 - \eta_2)\big)$ and using (\ref{Dyadic decomposition of bilinear multiplier}), we express the bilinear Bochner-Riesz operator $\mathcal{B}^{\alpha}$ as follows
\begin{align*}
    \mathcal{B}^{\alpha} = \sum_{j=0}^{\infty} \mathcal{B}_j^{\alpha} ,
\end{align*}
where
\begin{align}
\label{Bilinear Bochner-Riesz operator after dyadic decomposition}
    \mathcal{B}^{\alpha}_j (f,g)(x) &= \frac{1}{(2\pi)^{2 d_2}} \int_{\mathbb{R}^{d_2}} \int_{\mathbb{R}^{d_2}} e^{i(\lambda_1 + \lambda_2)\cdot x''} \sum_{k_1, k_2 = 0}^{\infty} \varphi^{\alpha}_j ([k_1]|\lambda_1|, [k_2]|\lambda_2|) \\
    &\nonumber \hspace{6cm} P^{\lambda_1} _{k_1} f^{\lambda_1}(x') P^{\lambda_2} _{k_2} g^{\lambda_2}(x')\, d\lambda_1 d\lambda_2.
\end{align}

Let $\mathcal{K}_j^{\alpha}$ denote the kernel of the bilinear operator $\mathcal{B}_j^{\alpha}$ (see \eqref{Bilinear Bochner-Riesz operator in terms of kernel}). Fix $\varepsilon>0$ and further decompose the kernel as
\begin{align}
\label{Expression: Kernel expression for Kj1}
   & \mathcal{K}^{\alpha}_{j}(x, y, z) \\
   & = \mathcal{K}^{\alpha}_j (x, y, z) \chi_{B (x, 2^{j(1+\varepsilon)})} (y) \chi_{B (x, 2^{j(1+\varepsilon)})} (z) + \mathcal{K}^{\alpha}_j (x, y, z) \chi_{B (x, 2^{j(1+\varepsilon)})} (y) \chi_{B (x, 2^{j(1+\varepsilon)})^c } (z)\nonumber\\
   & + \mathcal{K}^{\alpha}_j (x, y, z) \chi_{B(x, 2^{j(1+\varepsilon)})^c}(y)  \chi_{B(x, 2^{j(1+\varepsilon)})} (z) +\mathcal{K}^{\alpha}_j (x, y, z) \chi_{B(x, 2^{j(1+\varepsilon)})^c}(y) \chi_{B(x, 2^{j(1+\varepsilon)})^c}(z)\nonumber\\
   & =: \mathcal{K}^{\alpha}_{j,1} + \mathcal{K}^{\alpha}_{j,2} + \mathcal{K}^{\alpha}_{j,3} + \mathcal{K}^{\alpha}_{j,4}.\nonumber
\end{align}

For $l=1, 2, 3, 4$, let $\mathcal{B}^{\alpha}_{j,l}$ denote the bilinear operator whose kernel is $\mathcal{K}^{\alpha}_{j,l}$. Consequently, to prove Theorem \ref{Bilinear Bochner-Riesz main theorem}, it is enough to show that whenever $\alpha>\alpha(p_1,p_2)$, there exists a $\delta>0$ such that for each $l=1, 2, 3, 4$, the following estimates holds:
\begin{align}\label{equation of estimate of Balpha j,l}
    \|\mathcal{B}^{\alpha}_{j,l}(f,g)\|_{L^p} &\leq C 2^{-j \delta} \|f\|_{L^{p_1}} \|g\|_{L^{p_2}} ,
\end{align}
with $1/p=1/p_1+1/p_2$ and $1\leq p_1, p_2 \leq \infty$.

\subsection{Step II: Pointwise estimate of \texorpdfstring{$\mathcal{K}^{\alpha}_{j}$}{}}
\label{Step 2, pointwise kernel estimate}
Let us denote the kernel corresponding to the operator $\mathcal{B}^{\alpha}_{j}$ by $\mathcal{ K}^{\alpha}_{j}$. Then we have the following estimate.
\begin{lemma}
\label{Lemma: Pointwise kernel estimate for Bj}
For any $\beta_1, \beta_2 \geq 0$ and $\epsilon>0$, we have
\begin{enumerate}
    \item $\left|\mathcal{K}_j^{\alpha}(x,y,z) (1+\varrho(x,y))^{\beta_1} (1+\varrho(x,z))^{\beta_2} \right| \leq C |B(x, 1)|^{-2} 2^{j(\beta_1 +\beta_2 +1/2+\epsilon)} ;$
    \item $\left|\mathcal{K}_j^{\alpha}(x,y,z) (1+\varrho(x,y))^{\beta_1} (1+\varrho(x,z))^{\beta_2} \right| \leq C |B(x, 1)|^{-1} |B(z, ,1)|^{-1} 2^{j(\beta_1 +\beta_2 +1/2+\epsilon)} ;$
    \item $\left|\mathcal{K}_j^{\alpha}(x,y,z) (1+\varrho(x,y))^{\beta_1} (1+\varrho(x,z))^{\beta_2} \right| \leq C |B(x, 1)|^{-1} |B(y, 1)|^{-1} 2^{j(\beta_1 +\beta_2 +1/2+\epsilon)} ;$
    \item $\left|\mathcal{K}_j^{\alpha}(x,y,z) (1+\varrho(x,y))^{\beta_1} (1+\varrho(x,z))^{\beta_2} \right| \leq C |B(y, 1)|^{-1} |B(z, 1)|^{-1} 2^{j(\beta_1 +\beta_2 +1/2+\epsilon)} .$
\end{enumerate}
\end{lemma}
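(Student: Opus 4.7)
I would work directly from the integral representation of $\mathcal{K}_j^{\alpha}(x,y,z)$ coming from \eqref{Kernel expression}, with the multiplier $\varphi_j^{\alpha}(\eta_1,\eta_2)=(1-\eta_1-\eta_2)_+^{\alpha}\varphi(2^j(1-\eta_1-\eta_2))$. This multiplier is supported on the strip $\{1-\eta_1-\eta_2\in[2^{-j-1},2^{1-j}]\}$, of measure $\sim 2^{-j}$, has $L^{\infty}$ size $\sim 2^{-j\alpha}$, and satisfies $|\partial^{(s_1,s_2)}\varphi_j^{\alpha}|\lesssim 2^{-j(\alpha-s_1-s_2)}$ on its support. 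These bounds yield Sobolev control of the form $\|\varphi_j^{\alpha}\|_{L^2_{s_1,s_2}}\lesssim 2^{-j(\alpha+1/2)+j(s_1+s_2)}$, where the $2^{-j/2}$ comes from the width of the support strip. Treating $\alpha$ as fixed (so $2^{-j\alpha}$ may be absorbed into the constant), this is the source of the $2^{j(1/2+\varepsilon)}$ factor appearing in the four estimates.

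The first step is to reduce the distance weight $(1+\varrho(x,y))^{\beta_1}(1+\varrho(x,z))^{\beta_2}$ via the equivalence \eqref{Distance equivalent expression} to a combination of first-layer terms $(1+|x'-y'|)^{a_i}$ and second-layer terms $(1+|x''-y''|)^{b_i}$, and analogously for $z$. The second-layer factors are converted by integration by parts in $\lambda_1$ and $\lambda_2$ in the representation \eqref{Kernel expression}; each derivative falls on $\varphi_j^{\alpha}$ through the chain rule and costs a factor $2^j$, producing $(1+|x''-y''|)^{-N_1}(1+|x''-z''|)^{-N_2}$ at the cost $2^{j(N_1+N_2)}$. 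The first-layer factors are absorbed using the pointwise decay of the Hermite packets $\sum_{|\mu|=k}|\Phi^{\lambda}_\mu(x')|^2$ away from the classical region $|\lambda|^{1/2}|x'|\lesssim [k]^{1/2}$, or equivalently via commutator estimates with $\mathcal{L}_j^{1/2}$ using the sub-elliptic structure.

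For the pointwise size, the key tool is the factorization $|\mathcal{K}_k^{\lambda}(x',y')|\leq\mathcal{K}_k^{\lambda}(x',x')^{1/2}\mathcal{K}_k^{\lambda}(y',y')^{1/2}$ of the Hermite projection kernel together with the pointwise bound $\mathcal{K}_k^{\lambda}(w',w')\leq C|\lambda|^{d_1/2}[k]^{d_1/2-1}$, refined in the tail region $|\lambda|^{1/2}|w'|\gg [k]^{1/2}$. The four estimates (1)--(4) correspond to the four different ways of assigning the two factors produced by Cauchy--Schwarz to pairs of points among $\{x,y,z\}$ (with $x$ counted twice because it appears in both $\mathcal{K}^{\lambda_1}_{k_1}(x',y')$ and $\mathcal{K}^{\lambda_2}_{k_2}(x',z')$). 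In each case, after assigning factors and summing over $k$ and integrating over $\lambda$ on the support region $[k_i]|\lambda_i|\lesssim 1$, the surviving integral produces $|B(w,1)|^{-1}$ at each point $w$ where a factor was placed, via the identity $|B(w,1)|\sim\max\{1,|w'|\}^{d_2}$ together with the careful bookkeeping
\[
\int_{|\lambda|\lesssim 1/[k]}\sum_{k}[k]^{d_1/2-1}|\lambda|^{d_1/2}\,d\lambda \cdot(\text{tail correction in }|w'|)\ \lesssim\ |B(w,1)|^{-1}.
\]

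\textbf{Main obstacle.} The principal difficulty lies in tracking the inhomogeneous ball volume $|B(x,r)|\sim r^{d_1+d_2}\max\{r,|x'|\}^{d_2}$ across the regimes $|x'|\lesssim 1$ and $|x'|\gg 1$. In the former range one loses nothing from the tail decay of the Hermite functions (since $\max\{1,|x'|\}^{d_2}\sim 1$), while in the latter one must exploit the sharper bound reflecting the decay of $\Phi^{\lambda}_\mu(x')$ outside the turning point $[k]^{1/2}|\lambda|^{-1/2}$. Rigorously matching these two regimes with the $|B(\cdot,1)|^{-1}$ factors across estimates (1)--(4) requires splitting the range of $\lambda$ and $k$ and verifying that each piece yields the stated bound; the $2^{j\varepsilon}$ slack in the final estimate accommodates the boundary cases in these decompositions as well as the need to use Sobolev exponents strictly below $d_2/2$ in Propositions \ref{Bilinear weighted Plancherel} and \ref{prop: second layer weighted Plancherel} when $\beta_1$ or $\beta_2$ is near that critical value.
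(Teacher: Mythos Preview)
Your direct-attack approach differs substantially from the paper's and contains a concrete gap. The paper avoids all Hermite-function analysis here: it writes $\varphi_j^{\alpha}(\eta_1,\eta_2)=e^{-\eta_1-\eta_2}F(\eta_1,\eta_2)$ for a compactly supported $F$, applies Fourier inversion in $(\eta_1,\eta_2)$, and thereby expresses $\mathcal{K}_j^{\alpha}(x,y,z)$ as an integral over $(\tau_1,\tau_2)\in\mathbb{R}^2$ of products of kernels of the complex-time heat operators $e^{(i\tau_1-1)\mathcal{L}}$ and $e^{(i\tau_2-1)\mathcal{L}}$. Gaussian heat-kernel bounds (from \cite{Martini_Sikora_Grushin_Weighted_Plancherel_2012} and their analytic continuation in \cite{Ouhabaz_Analysis_heat_equation_domain_2005}) give $|\mathcal{K}_{e^{(i\tau-1)\mathcal{L}}}(x,y)|\lesssim|B(x,1)|^{-1}e^{-b\varrho(x,y)^2/(1+\tau^2)}$, so the weight $(1+\varrho(x,y))^{\beta_1}$ is absorbed into the Gaussian at the cost of $(1+|\tau_1|)^{\beta_1}$, and one finishes by Cauchy--Schwarz and the Sobolev bound $\|\varphi_j^{\alpha}\|_{L^2_s}\lesssim 2^{j(s-1/2)}$. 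The four variants (1)--(4) arise simply because the heat-kernel bound is symmetric in its two arguments, so one may place $|B(x,1)|^{-1}$ or $|B(y,1)|^{-1}$ at will. No regime-splitting in $|x'|$ is needed, and the $\epsilon$ comes only from ensuring $\int(1+|\tau_1|^2+|\tau_2|^2)^{-(1+\epsilon)}\,d\tau<\infty$.

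The gap in your proposal is the integration-by-parts step. You assert that under $\partial_{\lambda_1}$ ``each derivative falls on $\varphi_j^{\alpha}$ through the chain rule and costs a factor $2^j$''. This is false: in the integrand $e^{i\lambda_1\cdot(x''-y'')}\varphi_j^{\alpha}([k_1]|\lambda_1|,\cdot)\,\mathcal{K}_{k_1}^{\lambda_1}(x',y')$ the Hermite projection kernel also depends on $\lambda_1$ through the scaling $\Phi_\mu^{\lambda}(x')=|\lambda|^{d_1/4}\Phi_\mu(|\lambda|^{1/2}x')$, and $\partial_\lambda$ acting on it produces raising/lowering terms that shift $k_1$ and introduce factors of $|\lambda_1|^{-1}$. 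This is precisely the complication behind the intricate analysis in \cite{Martini_Muller_Sharp_Multiplier_Grushin_2014} (their Lemma~11, invoked here for Proposition~\ref{Prop: Truncated restriction with |x-y| power}); it yields $L^2$ control but not the clean pointwise control you claim. Likewise, the reduction of $(1+\varrho(x,y))^{\beta}$ to a product of first- and second-layer weights via \eqref{Distance equivalent expression} is not straightforward because of the denominator $|x'|+|y'|$ in one regime, and your extraction of $|B(w,1)|^{-1}\sim\max\{1,|w'|\}^{-d_2}$ from Hermite tails is asserted rather than demonstrated. These steps might be salvageable with substantial extra work, but as written the argument does not go through, and the heat-kernel route is both simpler and complete.
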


\begin{proof}
In the following, we only prove the estimate $\emph{(1)}$, as the remaining estimates follow from the similar arguments with obvious modification.

Set $F(\eta_{1},\eta_{2}) = \exp(\eta_{1}+\eta_{2}) \varphi_j^{\alpha}(\eta_{1},\eta_{2})$. Then from \eqref{Kernel expression in terms of heat kernel} we can write
\begin{align*}
    \mathcal{K}_{j}^{\alpha}(x,y,z) &= \frac{1}{4 \pi^2}\int_{\mathbb{R}^2} \widehat{F}(\tau_1, \tau_2) \mathcal{K}_{\exp((i \tau_1-1) \mathcal{L})}(x,y) \mathcal{K}_{\exp((i \tau_2-1) \mathcal{L})}(x,z) \,d\tau_1 \,d\tau_2 .
\end{align*}
Consequently, we obtain the estimate
\begin{align}
\label{Introducing weights in pointwise kernel estimate}
    & \left| \mathcal{K}_j^{\alpha}(x, y, z) (1+ \varrho(x,y))^{\beta_1} (1+ \varrho(x,z))^{\beta_2} \right| \leq C \int_{\mathbb{R}^2} |\widehat{F}_1(\tau_1, \tau_2)| |\mathcal{K}_{\exp((i \tau_1-1) \mathcal{L}_1)}(x,y)| \\
    &\nonumber \hspace{4cm} \times |\mathcal{K}_{\exp((i \tau_2-1) \mathcal{L}_2)}(x,z)| (1+ \varrho(x,y))^{\beta_1} (1+ \varrho(x,z))^{\beta_2} \ d\tau_1 \ d\tau_2 .
\end{align}
Using the fact \eqref{Heat kernel bound for Grushin} and \eqref{Doubling property of balls}, from \cite[Theorem 7.3]{Ouhabaz_Analysis_heat_equation_domain_2005}, we can conclude that
\begin{align*}
    |\mathcal{K}_{\exp((i \tau_1-1) \mathcal{L})}(x,y)| &\leq C |B(x, 1)|^{-1} \exp{\big(-b \tfrac{\varrho(x,y)^2}{(1+\tau_1^2)} \big)} .
\end{align*}
Similarly for $|\mathcal{K}_{\exp((i \tau_2-1) \mathcal{L})}(x,z)|$. Therefore, with the help of the above estimate, we obtain
\begin{align}
\label{Pointwise heat distance estimate}
    &|\mathcal{K}_{\exp((i \tau_1-1) \mathcal{L}_1)}(x,y) \mathcal{K}_{\exp((i \tau_2-1) \mathcal{L}_2)}(x,z)| (1+ \varrho(x,y))^{\beta_1} (1+ \varrho(x,z))^{\beta_2} \\
    &\nonumber \leq C |B(x, 1)|^{-2} (1+|\tau_1|)^{\beta_1} (1+|\tau_2|)^{\beta_2} .
\end{align}
Finally, plugging the estimate \eqref{Pointwise heat distance estimate} into \eqref{Introducing weights in pointwise kernel estimate}, and an application of H\"older's inequality yields
\begin{align*}
    &\left| \mathcal{K}_j^{\alpha}(x, y, z) (1+ \varrho(x,y))^{\beta_1} (1+ \varrho(x,z))^{\beta_2} \right| \\
    &\leq C |B(x, 1)|^{-2} \int_{\mathbb{R}^2} |\widehat{F_1}(\tau_1, \tau_2)| (1+|\tau_1|)^{\beta_1} (1+|\tau_2|)^{\beta_2} \ d\tau_1 \ d\tau_2 \\
    &\leq C |B(x, 1)|^{-2} \Big(\int_{\mathbb{R}^2} |\widehat{F}(\tau_1, \tau_2)|^2 (1+|\tau_1|^2+|\tau_2|^2)^{\beta_1+\beta_2+\frac{2+2\epsilon}{2}} d\tau_1 d\tau_2 \Big)^{\frac{1}{2}} \\
    & \hspace{8cm} \Big(\int_{\mathbb{R}^2} \frac{\ d\tau_1 d\tau_2}{(1+|\tau_1|^2+|\tau_2|^2)^{\frac{2+2\epsilon}{2}}} \Big)^{\frac{1}{2}} \\
    &\leq C |B(x, 1)|^{-2} \|F\|_{L^2_{\beta_1+\beta_2+1+\epsilon}(\mathbb{R}^2)} \\
    &\leq C |B(x, 1)|^{-2} 2^{j(\beta_1 +\beta_2 +1/2+\epsilon)} ,
\end{align*}
where the last line follows from the fact $\|F\|_{L^2_{s}(\mathbb{R}^2)}= \|\varphi_j^{\alpha}\|_{L^2_{s}(\mathbb{R}^2)} \leq C 2^{j(s-1/2)}$ for $s>0$.
    
\end{proof}

\subsection{Step III: Estimates of \texorpdfstring{$\mathcal{B}^{\alpha}_{j,2}$, $\mathcal{B}^{\alpha}_{j,3}$}{} and \texorpdfstring{$\mathcal{B}^{\alpha}_{j,4}$}{} } 
\label{Step 3, estimate of Bj234 case}
In this step, we aim to prove that for any $\alpha>0$,  $(\ref{equation of estimate of Balpha j,l})$  holds for $l=2,3,4$. We remark that the estimate of  $\mathcal{B}^{\alpha}_{j,1}$ requires a more delicate analysis, which we will develop in the next step. We also remark that the estimates of $\mathcal{B}^{\alpha}_{j,2}$, $\mathcal{B}^{\alpha}_{j,3}$ and $\mathcal{B}^{\alpha}_{j,4}$ are significantly more complicated than that of in the Euclidean case. Among the other things, we lack the group structure and the ball volume also depends on the center in Grushin setup. Our approach relies on a combination of $L^p$-boundedness of maximal operator $\mathcal{M}$ and the crucial pointwise kernel estimates given in Lemma \ref{Lemma: Pointwise kernel estimate for Bj}.

We are now ready to estimate $\mathcal{B}^{\alpha}_{j,l}$ for $l=2, 3, 4$. Let us begin with the estimate for $\mathcal{B}^{\alpha}_{j,3}$.

\subsubsection{Case I: \texorpdfstring{$p_1=p_2=1$}{}}
Recall that $$\mathcal{K}^{\alpha}_{j,3} (x, y, z) = \mathcal{K}^{\alpha}_j (x, y, z) \chi_{B(x, 2^{j(1+\varepsilon)})^c}(y)  \chi_{B(x, 2^{j(1+\varepsilon)})} (z) .$$
Applying Lemma \ref{Lemma: Pointwise kernel estimate for Bj}, there exists $\epsilon_1>0$ such that
\begin{align*}
    & |\mathcal{B}^{\alpha}_{j,3}(f,g)(x)| \\
    &\leq \int_{\mathbb{R}^{d}} \int_{\mathbb{R}^{d}} \frac{ |\mathcal{K}^{\alpha}_{j}(x, y, z)|{\big(1 +  \varrho(x, y) \big)^N} }{ {\big(1 +  \varrho(x, y) \big)^N} } |f(y)| \chi_{B(x, 2^{j(1+\varepsilon)})^c}(y) |g(z)| \,\chi_{B(x, 2^{j(1+\varepsilon)})}(z)    \, dy\, dz \\
    & \leq C 2^{j(N+1/2+\epsilon_1)} \Biggl\{ \int_{\mathbb{R}^{d}}  \frac{|f(y)| \chi_{B (x, 2^{j(1+\varepsilon)})^c}(y)} {| B ( y, 1)| \big(1 +  \varrho(x, y) \big)^N } dy \Biggr\} \Biggl\{ \int_{\mathbb{R}^{d}}  \frac{|g(z)| \chi_{B (x, 2^{j(1+\varepsilon)})}(z)} {| B ( z, 1)|} dz \Biggr\} .
\end{align*}
Now, applying H\"older's inequality together with Lemma \ref{lemma: outside distance} and the fact \eqref{Doubling property of balls}, we  obtain 
\begin{align*}
    &\|\mathcal{B}^{\alpha}_{j,3}(f,g)\|_{L^{1/2}} \leq C 2^{j(N+1+\varepsilon)} \left\{ \int_{\mathbb{R}^{d} } \frac{|f(y)|}{| B (y, 1)|} \left\{ \int_{ \varrho(x, y) \geq 2^{j(1 +\varepsilon)}} \frac{dx}{{\big(1 +\varrho(x, y) \big)^N} } \right\} dy \right\} \\
    & \hspace{6cm} \left\{\int_{\mathbb{R}^{d} } \frac{|g(z)|}{| B (z, 1)|} \left\{ \int_{\varrho(x, z) \leq 2^{j(1 +\varepsilon)}} dx \right\} dz \right\} \\
    & \leq C 2^{j(N+1/2+\epsilon_1)} \Big\{ \int_{\mathbb{R}^{d}} \frac{|f(y)|}{| B ( y, 1)|} \{ 2^{-j(1 + \varepsilon)N} |B(y, 2^{j(1+\varepsilon)})| \} dy\Big\} \Big\{ \int_{\mathbb{R}^{d} } \frac{|g(z)|}{| B (z, 1)|} |B(z, 2^{j(1+\varepsilon)})| dz \Big\} \\
    &\leq C 2^{j(N+1/2+\epsilon_1)}\, 2^{-j(1 + \varepsilon)N} \,2^{2j(1+ \varepsilon)Q}\,\|f\|_{L^1} \, \|g\|_{L^1} \\
    &\leq C 2^{-j \delta} \|f\|_{L^1} \, \|g\|_{L^1} ,
\end{align*}
by choosing $N> 0$ large and $\varepsilon, \epsilon_1>0$ small, we get $\delta > 0$ so that $N \varepsilon> 1/2 + \epsilon+1 +2 Q(1+\varepsilon)$.

\subsubsection{Case II: \texorpdfstring{$p_1=1$}{} and \texorpdfstring{$p_2 > 1$}{}}
\label{subsection: case II P1 1 and P2 bigger than 1 case}
Similarly as in Case I, using Lemma \ref{Lemma: Pointwise kernel estimate for Bj}
\begin{align*}
    & |\mathcal{B}^{\alpha}_{j,3}(f,g)(x)| \\
    & \leq C 2^{j(N+1/2+\epsilon_1)} \Biggl\{ \int_{\mathbb{R}^{d}}  \frac{|f(y)| \chi_{B (x, 2^{j(1+\varepsilon)})^c}(y)} {| B ( y, 1)| \big(1 +  \varrho(x, y) \big)^N } dy \Biggr\} \Bigg\{\frac{1}{| B ( x, 1)|} \int_{\varrho(x,z) \leq 2^{j(1+\varepsilon)}} |g(z)| dz \Biggr\} .
\end{align*}
Then, use of H\"older's inequality along with Lemma \ref{lemma: outside distance}, imply
\begin{align*}
   \|\mathcal{B}^{\alpha}_{j,3}(f,g)\|_{L^{p}} & \leq C 2^{j(N+1/2+\epsilon_1)} \left\{ \int_{\mathbb{R}^{d} } \frac{|f(y)|}{| B (y, 1)|} \left\{ 2^{-j(1 + \varepsilon)N} |B(y, 2^{j(1+\varepsilon)})| \right\} dy \right\} \\
   & \hspace{5cm} \Biggl\{ \int_{\mathbb{R}^{d}} \frac{|B(x, 2^{j(1+\varepsilon)})|^{p_2}}{| B (x, 1)|^{p_2}} |\mathcal{M}g(x)|^{p_2} dx \Biggr\}^{1/{p_2}} \\
   &\leq C 2^{j(N+1/2+\epsilon_1)} 2^{-j(1 + \varepsilon)N} \,2^{2j(1+ \varepsilon)Q}\,\|f\|_{L^{1}} \|\mathcal{M}g\|_{L^{p_2}}  \\
    &\leq C 2^{-j \delta} \|f\|_{L^{1}}  \|g\|_{L^{p_2}} ,
\end{align*}
by choosing $N$ sufficiently large.

\subsubsection{Case III: \texorpdfstring{$p_1 > 1$}{} and \texorpdfstring{$p_2 = 1$}{}}
Again using Lemma \ref{Lemma: Pointwise kernel estimate for Bj}, we can see
\begin{align*}
    & |\mathcal{B}^{\alpha}_{j,3}(f,g)(x)| \leq C 2^{j(N+1/2+\epsilon_1)} \\
    & \hspace{2cm} \Bigg\{\frac{1}{| B ( x, 1)|} \int_{\varrho(x,z) \geq 2^{j(1+\varepsilon)}}  \frac{ |f(y)|} {\big(1 + \varrho(x, y) \big)^N } dy \Biggr\} \Biggl\{ \int_{\mathbb{R}^{d}}  \frac{|g(z)| \chi_{B (x, 2^{j(1+\varepsilon)})}(z)} {| B ( z, 1)|} dz \Biggr\} .
\end{align*}
Consequently, H\"older's inequality and Lemma \ref{lemma: outside distance} gives
\begin{align*}
   \|\mathcal{B}^{\alpha}_{j,3}(f,g)\|_{L^{p}} & \leq C 2^{j(N+1/2+\epsilon_1)}\, \Biggl\{ \int_{\mathbb{R}^{d}} \frac{|\mathcal{M}f(x)|^{p_1}}{| B (x, 1)|^{p_1}} \left\{ 2^{-j(1 + \varepsilon)N p_1} |B(x, 2^{j(1+\varepsilon)})|^{p_1} \right\} dx \Biggr\}^{1/{p_1}} \\
   & \hspace{6cm} \left\{ \int_{\mathbb{R}^{d} } \frac{|g(z)|}{| B (z, 1)|} \left\{ |B(z, 2^{j(1+\varepsilon)})| \right\} dz \right\}  \\
   &\leq C 2^{j(N+1/2+\epsilon_1)}\, 2^{-j(1 + \varepsilon)N} \,2^{2j(1+ \varepsilon)Q}\,\|\mathcal{M}f\|_{L^{p_1} } \, \|g\|_{L^{1}} \\
   &\leq C 2^{-j \delta} \|f\|_{L^{p_1} } \, \|g\|_{L^{1}} ,
\end{align*}
provided we choose $N$ sufficiently large.

\subsubsection{Case IV: \texorpdfstring{$p_1 > 1$}{} and \texorpdfstring{$p_2 > 1$}{}} This can be proved by combining the above arguments from Case III and Case II for $f$ and $g$ respectively.

The estimation of $\mathcal{B}^{\alpha}_{j,2}$ is identical to that of $\mathcal{B}^{\alpha}_{j,3}$, if we interchange the role of $f$ and $g$. Thus, the details are omitted. Furthermore, we can also estimate $\mathcal{B}^{\alpha}_{j,4}$ by following the above line arguments with obvious modification. In this case, we again omit the details.

\subsection{Step IV: Estimate of \texorpdfstring{$\mathcal{B}^{\alpha}_{j,1}$}{}}
\label{subsection: Estimate of Bj1 case}
In this final step, we establish (\ref{equation of estimate of Balpha j,l}) for $l=1$. First, we   decompose the operator $\mathcal{B}^{\alpha}_{j,1}$ further. Let $\varepsilon>0$. We choose a sequence $\{a_n\}_{n \in \mathbb{N}}$ in $\mathbb{R}^d$  with the property that whenever $n_1 \neq n_2$, we have $\varrho(a_{n_1}, a_{n_2}) > \frac{2^{j(1+\varepsilon)}}{10}$ and that $\sup_{a \in \mathbb{R}^d} \inf_{n} \varrho(a, a_{n}) \leq \frac{2^{j(1+\varepsilon)}}{10}$. We then define  $ S_{n}^j $ corresponding to this sequence  by 
\begin{align*}
    S_{n}^j = \Bar{B}\left(a_n, \tfrac{2^{j(1+\varepsilon)}}{10}\right) \setminus \bigcup_{m < n} \Bar{B}\left(a_m, \tfrac{2^{j(1+\varepsilon)}}{10} \right).
\end{align*}
Note that for $n_1 \neq n_2$, the balls $B(a_{n_1}, \frac{2^{j(1+\varepsilon)}}{20})$ and $ B(a_{n_2}, \frac{2^{j(1+\varepsilon)}}{20})$ are disjoint. Since these balls are disjoint, we have the following bounded overlapping property, 
\begin{align}
\label{Bounded overlapping property in j balls}
   \sup_{n} \#\{n_1 : \varrho(a_{n}, a_{n_1}) \leq  2 \cdot 2^{j(1+\varepsilon)}\} \leq C . 
\end{align} 
From (\ref{Expression: Kernel expression for Kj1}), we have
\begin{align*}
    \supp{\mathcal{K}^{\alpha}_{j,1}} \subseteq \mathcal{D}_{j} := \{(x,y,z) : \varrho(x,y) \leq 2^{j(1+\varepsilon)},\  \varrho(x,z) \leq 2^{j(1+\varepsilon)} \},
\end{align*}
which yields
\begin{align*}
    \mathcal{D}_{j} \subseteq \bigcup_{\substack{n,n_1,n_2: \varrho(a_n, a_{n_1})\leq 2\cdot 2^{j(1+\varepsilon)} \\ \varrho(a_n, a_{n_2})\leq 2\cdot 2^{j(1+\varepsilon)}}} S_{n}^j \times (S_{n_1}^j \times S_{n_2}^j) .
\end{align*}
Therefore, we can write
\begin{align}\label{decomposition of B alpha j 1}
    \mathcal{B}^{\alpha}_{j,1} (f,g)(x) &= \sum_{n=0}^{\infty} \sum_{\substack{n_1: \varrho(a_n , a_{n_1})\leq 2 \cdot 2^{j(1+\varepsilon)} \\ n_2: \varrho(a_n , a_{n_2})\leq 2 \cdot 2^{j(1+\varepsilon)}}} \chi_{S_{n}^j}(x) \mathcal{B}^{\alpha}_{j,1} (f_{n_1},g_{n_2})(x) ,
\end{align}
where $f_{n_1}^j = f \chi_{S_{n_1}^j}$ and $g_{n_2}^j = g \chi_{S_{n_2}^j}$. 

We now claim that, for
\begin{align}
\label{Assumption of the main claim}
    \varrho(a_n , a_{n_1})\leq 2 \cdot 2^{j(1+\varepsilon)} \quad \text{and} \quad \varrho(a_n , a_{n_2})\leq 2 \cdot 2^{j(1+\varepsilon)} ,
\end{align}
whenever $\alpha>\alpha(p_1, p_2)$, there exists a $\delta>0$ such that
\begin{align}
\label{claim in the estimate of Balpha j,1}
    \| \chi_{S_{n}^j} \mathcal{B}^{\alpha}_{j,1} (f_{n_1}^j,g_{n_2}^j) \|_{L^{p}} \leq C 2^{-j \delta} \|f_{n_1}^j\|_{L^{p_1}} \|g_{n_2}^j\|_{L^{p_2}} ,
\end{align}
with $1/p=1/p_1+1/p_2$ and $1\leq p_1, p_2 \leq \infty$. 

At this point, we postpone the proof of this claim and continue with the estimate of $\mathcal{B}^{\alpha}_{j,1}$, assuming the claim is true. Using the fact that the sets $S_n^j$ are disjoint, together with the bounded overlapping property \eqref{Bounded overlapping property in j balls} discussed above, it follows from (\ref{decomposition of B alpha j 1}) that
\begin{align}\label{break1}
    &\|\mathcal{B}^{\alpha}_{j,1} (f,g)\|_{L^{p}} = \Big\|\sum_{n=0}^{\infty} \sum_{\substack{n_1: \varrho(a_n , a_{n_1})\leq 2 \cdot 2^{j(1+\varepsilon)} \\ n_2: \varrho(a_n ,a_{n_2})\leq 2 \cdot 2^{j(1+\varepsilon)}}} \chi_{S_{n}^j} \mathcal{B}^{\alpha}_j (f_{n_1}^j,g_{n_2}^j) \Big\|_{L^{p}} \\
    &\nonumber \leq C \Big(\sum_{n=0}^{\infty} \sum_{\substack{n_1: \varrho(a_n , a_{n_1})\leq 2 \cdot 2^{j(1+\varepsilon)} \\ n_2: \varrho(a_n , a_{n_2})\leq 2 \cdot 2^{j(1+\varepsilon)}}} \| \chi_{S_{n}^j} \mathcal{B}^{\alpha}_j (f_{n_1}^j,g_{n_2}^j) \|_{L^{p}}^{p} \Big)^{1/p}.
\end{align}
With the help of the above claim \eqref{claim in the estimate of Balpha j,1}, applying H\"older's inequality and again bounded overlapping property \eqref{Bounded overlapping property in j balls}, we further estimate right hand side of (\ref{break1})  by 
\begin{align*}
    &\nonumber C 2^{-j \delta } \Bigg(\sum_{n=0}^{\infty} \Big\{ \sum_{n_1: \varrho(a_n , a_{n_1})\leq 2 \cdot 2^{j(1+\varepsilon)}} \|f_{n_1}^j\|_{L^{p_1}}^{p} \Big\} \Big\{ \sum_{n_2: \varrho(a_n , a_{n_2})\leq 2 \cdot 2^{j(1+\varepsilon)}} \|g_{n_2}^j\|_{L^{p_2}}^{p} \Big\} \Bigg)^{\frac{1}{p}} \\
    &\nonumber \leq C 2^{-j \delta } \Big\{\sum_{n=0}^{\infty} \sum_{n_1: \varrho(a_n , a_{n_1})\leq 2 \cdot 2^{j(1+\varepsilon)}} \|f_{n_1}^j\|_{L^{p_1}}^{p_1} \Big\}^{\frac{1}{p_1}} \Big\{\sum_{n=0}^{\infty} \sum_{n_2: \varrho(a_n , a_{n_2})\leq 2 \cdot 2^{j(1+\varepsilon)}} \|g_{n_2}^j\|_{L^{p_2}}^{p_2} \Big\}^{\frac{1}{p_2}} \\
    &\nonumber \leq C 2^{-j \delta} \|f\|_{L^{p_1}} \|g\|_{L^{p_2}} .
\end{align*}
Thus, we have completed the estimate of $\mathcal{B}^{\alpha}_{j,1}$.

Therefore, it only remains to verify the claim \eqref{claim in the estimate of Balpha j,1}. First, observe that an application of bilinear interpolation, as discussed in \cite[Section 4.3]{Bernicot_Grafakos_Song_Yan_Bilinear_Bochner_Riesz_2015}, it is enough to verify the claim \eqref{claim in the estimate of Balpha j,1} for all points $(p_1,p_2,p) \in \{(\infty, \infty, \infty)$, $(2, \infty, 2)$, $(1,1,1/2)$, $(1, 2, 2/3)$, $(1, \infty, 1)$, $(2, 2, 1)$, $(\infty,2,2)$, $(\infty, 1, 1)$, $(2,1,2/3)\}$. Again, interchanging the role of $f$ and $g$, we may restrict ourself to the cases when $(p_1,p_2,p) \in \{(\infty, \infty, \infty)$, $(2, \infty, 2)$, $(1,1,1/2)$, $(1, 2, 2/3)$, $(1, \infty, 1)$, $(2, 2, 1)\}$. Thus, in the next few sections, our goal is to establish the claim \eqref{claim in the estimate of Balpha j,1} for the points $(\infty, \infty, \infty)$, $(2, \infty, 2)$, $(1,1,1/2)$, $(1, 2, 2/3)$, $(1, \infty, 1)$, $(2, 2, 1)$.

\section{Proof of the claim (\ref{claim in the estimate of Balpha j,1}) for \texorpdfstring{$(p_1,p_2,p)=(\infty,\infty,\infty)$}{}} \label{section 5}
In this section, we establish the validity of the claim \eqref{claim in the estimate of Balpha j,1} for the point $(p_1, p_2,p)=(\infty, \infty, \infty)$. In the Euclidean settings, the result at this point follows straightforwardly from the explicit kernel expression of the bilinear Bochner-Riesz operator. However, since such a kernel representation is not known in our framework, we adopt a different approach. Our proof is based on a bilinear analogue of the weighted Plancherel estimate with respect to the second-layer of the bilinear Bochner-Riesz operator (see Proposition \ref{prop: second layer weighted Plancherel}). In this case, the relevant exponent is $\alpha(\infty, \infty)=d-1/2$.

Recall that $\mathcal{K}^{\alpha}_{j,1}$ denotes the kernel of $\mathcal{B}^{\alpha}_{j,1}$ (see \eqref{Bilinear Bochner-Riesz operator in terms of kernel}). Let $\gamma_1, \gamma_2\geq 0$. An application of  H\"older's inequality leads to
\begin{align}
\label{Application of Holder in infinity}
    |\chi_{S_{n}^j} \mathcal{B}^{\alpha}_{j,1} (f_{n_1}^j,g_{n_2}^j)(x)| &\leq \left(\int_{\mathbb{R}^{d}} \int_{\mathbb{R}^{d}} |x''-y''|^{2\gamma_1} |x''-z''|^{2\gamma_2} |\mathcal{K}^{\alpha} _{j,1} (x, y, z)|^2 \ dy \ dz \right)^{1/2} \\
    &\nonumber \hspace{2cm} \left(\int_{\mathbb{R}^{d}} \frac{|f_{n_1}^j(y)|^2}{|x''-y''|^{2\gamma_1}} \ dy \right)^{1/2} \left(\int_{\mathbb{R}^{d}} \frac{|g_{n_2}^j(z)|^2}{|x''-z''|^{2\gamma_2}} \ dz \right)^{1/2} .
\end{align}

Since, we are proving claim \eqref{claim in the estimate of Balpha j,1}, from \eqref{Assumption of the main claim} we always have $ \varrho(a_n , a_{n_1})\leq 2 \cdot 2^{j(1+\varepsilon)}$ and $\varrho(a_n , a_{n_2})\leq 2 \cdot 2^{j(1+\varepsilon)}$. In the Grushin setup, the volume of a ball depends both on its radius as well as on its center, we divide the proof into the following two cases.

\subsection{Case-I : \texorpdfstring{$|a'_n| > \frac{11}{5} 2^{j(1+\varepsilon)}$}{}}
\label{subsection: case I of infinity infinity}
In this case, we take $\gamma_1=\gamma_2=0$. Note that $\supp{\varphi_j^{\alpha}} \subseteq [0,1]^2$. Therefore, by applying Proposition \ref{Bilinear weighted Plancherel}, we get
\begin{align*}
    \left(\int_{\mathbb{R}^{d}} \int_{\mathbb{R}^{d}} |\mathcal{K}^{\alpha}_{j,1} (x, y, z)|^2 \ dy \ dz \right)^{1/2} &\leq C \|\varphi_j^{\alpha}\|_{L^2(\mathbb{R}^2)} |x'|^{-d_2/2} |x'|^{-d_2/2} .
\end{align*}

Note that $ \varrho(a_n , a_{n_1})\leq 2 \cdot 2^{j(1+\varepsilon)}$ and $\varrho(a_n , a_{n_2})\leq 2 \cdot 2^{j(1+\varepsilon)}$ and together with the assumption $|a_{n}'|> 11 \cdot 2^{j(1+\varepsilon)}/5$, implies $|a_n'| \sim |a_{n_1}'| \sim |a_{n_2}'|$.

Also, since $x\in B(a_n, 2^{j(1+\varepsilon)}/5)$, we have $|x'-a_n'| \leq 2^{j(1+\varepsilon)}/5$. This again implies that $|x'| \geq 2|a'_n|$.

Hence, using the above observations along with fact \eqref{Estimate : Ball volume}, it follows from \eqref{Application of Holder in infinity} that
\begin{align*}
    & |\chi_{S_{n}^j} \mathcal{B}^{\alpha}_{j,1} (f_{n_1}^j,g_{n_2}^j)(x)| \\
    &\leq C \|\varphi_j^{\alpha}\|_{L^2(\mathbb{R}^2)} |x'|^{-d_2} \|f_{n_1}^j\|_{L^{\infty}}\, 2^{j(1+\varepsilon)d/2}\, |a_{n_1}'|^{d_2/2} \|g_{n_2}^j\|_{L^{\infty}}\, 2^{j(1+\varepsilon)d/2}\, |a_{n_2}'|^{d_2/2} \\
    &\leq C 2^{-j \alpha} 2^{-j/2} 2^{j d (1+\varepsilon)} \|f_{n_1}^j\|_{L^{\infty}} \|g_{n_2}^j\|_{L^{\infty}} ,
\end{align*}
where we have used the fact $\|\varphi_j^{\alpha}\|_{L^2(\mathbb{R}^2)} \leq C 2^{-j \alpha} 2^{-j/2}$.

Since $\alpha>d-1/2$, we can choose $\varepsilon>0$ very small such that $\delta=\alpha-d-1/2-\varepsilon d>0$ and
\begin{align*}
    \|\chi_{S_{n}^j} \mathcal{B}^{\alpha}_{j,1} (f_{n_1}^j,g_{n_2}^j)\|_{L^{\infty}} &\leq C 2^{-j\delta} \|f_{n_1}^j\|_{L^{\infty}} \|g_{n_2}^j\|_{L^{\infty}} .
\end{align*}

\subsection{Case-II : \texorpdfstring{$|a'_n| \leq \frac{11}{5} 2^{j(1+\varepsilon)}$}{}}
\label{subsection: case II for infinity infinity}
For this scenario, by applying Proposition \ref{prop: second layer weighted Plancherel} for $0\leq \gamma_1, \gamma_2 < d_2/2$, we obtain
\begin{align}
\label{kernel estimate in infinity case 2}
    \left(\int_{\mathbb{R}^{d}} \int_{\mathbb{R}^{d}} |x''-y''|^{2\gamma_1} |x''-z''|^{2\gamma_2} |\mathcal{K}^{\alpha} _{j,1} (x, y, z)|^2 \ dy \ dz \right)^{1/2} &\leq C \|\varphi_j^{\alpha}\|_{L_{\gamma_1, \gamma_2}^2(\mathbb{R}^2)} \\
    &\nonumber \leq C 2^{-j \alpha} 2^{\gamma_1+\gamma_2-j/2} .
\end{align}

Also note that, for $i=1,2$, the conditions $|a_n' - a_{n_i}'|\leq 2 \cdot 2^{j(1+\varepsilon)}$ implies $|a_{n_i}'| \leq \frac{21}{5} 2^{j(1+\varepsilon)}$. Therefore, using \eqref{kernel estimate in infinity case 2} and applying Lemma \ref{Lemma: Integral of weight over ball in second layer} for $0\leq \gamma_1, \gamma_2 < d_2/2$, the estimate \eqref{Application of Holder in infinity} gives
\begin{align*}
    & |\chi_{S_{n}^j} \mathcal{B}^{\alpha}_{j,1} (f_{n_1}^j,g_{n_2}^j)(x)| \\
    &\leq C 2^{-j \alpha} 2^{\gamma_1+\gamma_2-j/2} \|f_{n_1}^j\|_{L^{\infty}} 2^{j(1+\varepsilon)(d_1/2+(d_2-2\gamma_1))} \|g_{n_2}^j\|_{L^{\infty}} 2^{j(1+\varepsilon)(d_1/2+(d_2-2\gamma_2))} .
\end{align*}
As $\alpha > d-1/2$, we can choose $\varepsilon>0$ sufficiently small and $0 \leq \gamma_1 + \gamma_2 < d_2$ such that $\delta=\alpha-(d_1+\gamma_1+ \gamma_2-1/2) -(2d_2-2\gamma_1-2\gamma_2)-\varepsilon(d_1+(2d_2-2\gamma_1-2\gamma_2))>0$ and
\begin{align*}
    \|\chi_{S_{n}^j} \mathcal{B}^{\alpha}_{j,1} (f_{n_1}^j,g_{n_2}^j)\|_{L^{\infty}} &\leq C 2^{-j\delta} \|f_{n_1}^j\|_{L^{\infty}} \|g_{n_2}^j\|_{L^{\infty}} .
\end{align*}

\section{Proof of the claim (\ref{claim in the estimate of Balpha j,1}) for \texorpdfstring{$(p_1,p_2,p)=(2,\infty,2)$}{}}

In this context, we have $\alpha(2, \infty) =(d-1)/2$. Let $\gamma \geq 0$. Applying Cauchy-Schwartz inequality, we obtain 
\begin{align}
\label{Application of Holder in 2, infty}
    &\|\chi_{S_{n}^j} \mathcal{B}^{\alpha}_{j,1} (f_{n_1}^j, g_{n_2}^j)\|_{L^2} \leq \Big[\sup_x \Big(\int_{\mathbb{R}^d} \frac{|g_{n_2}^j(z)|^2}{|x''-z''|^{2\gamma}} \, dz \Big)^{\frac{1}{2}} \Big] \\
    &\nonumber \hspace{3cm} \times \Big(\int_{\mathbb{R}^d } \int_{\mathbb{R}^d} |x''-z''|^{2\gamma}  \Big|\int_{\mathbb{R}^d} \mathcal{K}^{\alpha} _{j,1} (x, y,z)\, f_{n_1}^j(y)\, dy \Big|^2 dz \, dx \Big)^{\frac{1}{2}} .
\end{align}
Similarly as in \eqref{Introducing cutoff in the kernel expression}, take two non-negative, increasing sequence of Borel functions $\{\zeta_{n_3}\}_{n_3 \in \mathbb{N}}$ and $\{\zeta_{n_4}\}_{n_4 \in \mathbb{N}}$ on $\mathbb{R}$ such that they are compactly supported in $\mathbb{R} \setminus \{0\}$ and converging pointwise on $\mathbb{R} \setminus \{0\}$ to the constant function $1$. Then define
\begin{align*}
    \mathcal{K}^{\alpha} _{j,1, n_3, n_4}(x, \cdot, \cdot) &:= \zeta_{n_3}(T_1) \zeta_{n_4}(T_2)(\mathcal{K}^{\alpha} _{j,1} (x, \cdot, \cdot)) ,
\end{align*}
Then $\mathcal{K}^{\alpha} _{j,1, n_3, n_4}(x, \cdot, \cdot) \to \mathcal{K}^{\alpha} _{j,1} (x, \cdot, \cdot)$ in $L^2(\mathbb{R}^d \times \mathbb{R}^d)$ for almost all $x \in \mathbb{R}^d$. Therefore to handle the second factor in the right hand side of \eqref{Application of Holder in 2, infty}, an application of Fatou's lemma implies that enough to estimate the second term with $\mathcal{K}^{\alpha} _{j,1, n_3, n_4}(x, \cdot, \cdot)$ replacing $\mathcal{K}^{\alpha} _{j,1} (x, \cdot, \cdot)$. From \eqref{Explicit Kernel expression} we get
\begin{align}
\label{writting kernel in different way in 2, infinity}
&\int_{\mathbb{R}^d} \mathcal{K}^{\alpha} _{j,1,n_3,n_4} (x, y,z)\, f_{n_1}^j(y)\, dy \\
&\nonumber = \frac{1}{(2\pi)^{d_2}}\int_{\mathbb{R}^{d_2}} e^{i\lambda_2\cdot(x'' - z'')}\, \sum_{k_2= 0} ^{\infty} \mathcal{G}_{x,n_3}^j ([k_2]|\lambda_2|) \zeta_{n_4}(|\lambda_2|) \sum_{|\mu_2|=k_2} \Phi_{\mu_2}^{\lambda_2}(x') \Phi_{\mu_2}^{\lambda_2}(z') \,  d\lambda_2\\
&\nonumber =: \mathcal{K}_{\mathcal{G}_{x, n_3, n_4}^j(\mathcal{L}, T)} (z, x),
\end{align}
where $\mathcal{G}_{x, n_3, n_4}^j(\eta_2, \tau_2)= \mathcal{G}_{x,n_3}^j(\eta_2) \zeta_{n_4}(\tau_2)$ with
\begin{align}
\label{Expression for mathcal g}
  \mathcal{G}_{x,n_3}^j (\eta_2)= \frac{1}{(2\pi)^{d_2}}\int_{\mathbb{R}^{d_2}} e^{i\lambda_1\cdot(x'' - y'')}\, \sum_{k_1= 0} ^{\infty}  \varphi_j^{\alpha} ([k_1]|\lambda_1|, \eta_2) \zeta_{n_3}(|\lambda_1|) P_{k_1}^{\lambda_1}(f_{n_1}^j)^{\lambda_1}(x') \, d\lambda_1 .
\end{align}
This leads us to write the square of the second factor of \eqref{Application of Holder in 2, infty} as follows.
\begin{align}
\label{eqn. L^2-norm of |z'| substitute}
& \int_{\mathbb{R}^d } \int_{\mathbb{R}^d} |x''-z''|^{2\gamma}  
\Big|\int_{\mathbb{R}^d} \mathcal{K}^{\alpha} _{j,1,n_3,n_4} (x, y,z)\, f_{n_1}^j(y)\, dy \Big|^2 dz dx \\
&\nonumber= \int_{\mathbb{R}^d } \int_{\mathbb{R}^d} |x''-z''|^{2\gamma} | \mathcal{K}_{\mathcal{G}_{x,n_3,n_4}^j(\mathcal{L})} (z, x) |^2 dz dx .
\end{align}

Now we consider the following two cases.

\subsection{Case-I : \texorpdfstring{$|a'_n| > \frac{11}{5} 2^{j(1+\varepsilon)}$}{}}
In this case we take $\gamma=0$. First, recall that, from subsection \ref{subsection: case I of infinity infinity} whenever $x \in S_{n}^j$, we have $|x'| \geq 2|a'_n|$ and $|a_n'| \sim |a_{n_1}'| \sim |a_{n_2}'|$. Now Proposition \ref{Weighted Plancherel Martini Sikora} with $\gamma=0$ provides
\begin{align}
\label{eqn. L^2-norm of |z'| for gamma 0 case}
    \int_{\mathbb{R}^d} | \mathcal{K}_{\mathcal{G}_{x,n_3,n_4}^j(\mathcal{L})} (z, x) |^2 dz & \leq C \int_{0}^{1} |\mathcal{G}_{x,n_3,n_4}^j (\eta_2)|^2 |x'|^{- d_2}\,d\eta_2 .
\end{align}
Combining the estimate \eqref{eqn. L^2-norm of |z'| for gamma 0 case}, \eqref{eqn. L^2-norm of |z'| substitute} and plugging them in \eqref{Application of Holder in 2, infty} along with \eqref{Estimate : Ball volume} yields
\begin{align}
\label{Estmate of L2 norm in case I for 2 infinity}
  \|\chi_{S_{n}^j} \mathcal{B}^{\alpha}_{j,1} (f_{n_1}^j, g_{n_2}^j)\|_{L^2} &\leq C \Big[\int_{S_{n}^j} \int_{0}^{1} |\mathcal{G}_{x,n_3,n_4}^j(\eta_2)|^2 \,  |x'|^{- d_2}\,d\eta_2 \, dx \Big]^{\frac{1}{2}} \|g_{n_2}^j\|_{L^{\infty}}\, 2^{j(1+\varepsilon)\frac{d}{2}}\, |a_{n_2}'|^{\frac{d_2}{2}}  \\
  &\nonumber\leq C \Big( \int_{\mathbb{R}^d } \int_{0}^{1} |\mathcal{G}_{x,n_3,n_4}^j (\eta_2)|^2 \,  \,d\eta_2 \,  dx   \Big)^{\frac{1}{2}} \|g_{n_2}^j\|_{L^{\infty}}\, 2^{j(1+\varepsilon)d/2} .
\end{align}
Furthermore, from \eqref{Expression for mathcal g}, we have the following estimate.
\begin{align}
\label{L2 estimate of mathcal g}
    & \int_{0}^{1} \int_{\mathbb{R}^d } |\mathcal{G}_{x,n_3,n_4}^j (\eta_2)|^2 \, dx \,d\eta_2 \\
    &\nonumber =C \int_{0}^{1} \int_{\mathbb{R}^{d_2} } \sum_{k_1= 0}^{\infty}  |\varphi_j^{\alpha} ([k_1]|\lambda_1|, \eta_2)|^2 |\zeta_{n_3}([k_1]|\lambda_1|)|^2 |\zeta_{n_4}(\eta_2)|^2 \|P_{k_1}^{\lambda_1}(f_{n_1}^j)^{\lambda_1}\|_{L^2}^2 \, d\lambda_1 \,d\eta_2 \\
    &\nonumber \leq C \int_{\mathbb{R}^{d_2} } \sum_{k_1= 0}^{\infty}  \left( \int_{0}^{1} |\varphi_j^{\alpha} ([k_1]|\lambda_1|, \eta_2)|^2 \,d\eta_2 \right) \|P_{k_1}^{\lambda_1}(f_{n_1}^j)^{\lambda_1}\|_{L^2}^2 \, d\lambda_1 \\
    &\nonumber \leq C 2^{-2j \alpha} 2^{-j} \|f_{n_1}^j\|_{L^2}^2 ,
\end{align}
where we have used the fact: $\sup_{k_1}\int_{0}^{1} |\varphi_j^{\alpha} ([k_1]|\lambda_1|, \eta_2)|^2 \,d\eta_2 \leq C 2^{-2j \alpha} 2^{-j} $.

Finally, substituting \eqref{L2 estimate of mathcal g} in \eqref{Estmate of L2 norm in case I for 2 infinity}, we obtain
\begin{align*}
    \|\chi_{S_{n}^j} \mathcal{B}^{\alpha}_{j,1} (f_{n_1}^j, g_{n_2}^j)\|_{L^2} &\leq C 2^{-j \alpha} 2^{-j/2} 2^{j(1+\varepsilon)d/2} \|f_{n_1}^j\|_{L^2} \|g_{n_2}^j\|_{L^{\infty}} \\
    &\leq C 2^{-j \delta} \|f_{n_1}^j\|_{L^2} \|g_{n_2}^j\|_{L^{\infty}} ,
\end{align*}
where $\delta=\alpha-(d-1)/2-\varepsilon d/2$, since $\alpha>(d-1)/2$, we can choose $\varepsilon>0$ sufficiently small such that $\alpha-(d-1)/2-\varepsilon d/2>0$.

\subsection{Case-II : \texorpdfstring{$|a'_n| \leq \frac{11}{5} 2^{j(1+\varepsilon)}$}{}}

First note that, applying Proposition \ref{Lemma: Martini_Mullar_Weighted_Plancherel without cutoff} for $0\leq \gamma< d_2/2$, we have
\begin{align}
\label{eqn. L^2-norm of |z'|}
   \int_{\mathbb{R}^d} |x''-z''|^{2\gamma} | \mathcal{K}_{\mathcal{G}_{x,n_3,n_4}^j(\mathcal{L})} (z, x) |^2 dz &\leq C \int_{\mathbb{R}} (1+\xi_2^2)^{\gamma} |\widehat{\mathcal{G}_{x,n_3,n_4}^j} (\xi_2)|^2 \, d\xi_2 ,
\end{align}
where for fixed $x\in \mathbb{R}^d$, $\widehat{\mathcal{G}_{x,n_3,n_4}^j}$ denotes the Fourier transform of $\mathcal{G}_{x,n_3,n_4}^j$.

From \eqref{Application of Holder in 2, infty} along with the estimate \eqref{eqn. L^2-norm of |z'|}, \eqref{eqn. L^2-norm of |z'| substitute} and Lemma \ref{Lemma: Integral of weight over ball in second layer} for $0\leq \gamma <d_2/2$ yields
\begin{align*}
   & \|\chi_{S_{n}^j} \mathcal{B}^{\alpha}_{j,1} (f_{n_1}^j, g_{n_2}^j)\|_{L^2} \\
   &\leq C \Bigl(\int_{\mathbb{R}^d } \int_{\mathbb{R}} (1+\xi_2^2)^{\gamma} 
   |\widehat{\mathcal{G}_{x,n_3,n_4}^j} (\xi_2)|^2 \, d\xi_2 \,  dx   \Bigr)^{\frac{1}{2}} \|g_{n_2}^j\|_{L^{\infty}}\, 2^{j(1+\varepsilon) (d_1+2(d_2-2\gamma))/2}  .
\end{align*}
Similar to \eqref{L2 estimate of mathcal g}, here also we have the following estimate.
\begin{align*}
    & \int_{\mathbb{R}} \int_{\mathbb{R}^d } (1+\xi_2^2)^{\gamma} 
    |\widehat{\mathcal{G}_{x,n_3,n_4}^j} (\xi_2)|^2 \, dx \, d\xi_2 \\
    &=C \int_{\mathbb{R}} \int_{\mathbb{R}^d } \sum_{k_1= 0}^{\infty} (1+\xi_2^2)^{\gamma} |\mathcal{F}_2 \varphi_j^{\alpha} ([k_1]|\lambda_1|, \xi_2)|^2 |\zeta_{n_3}([k_1]|\lambda_1|)|^2 |\zeta_{n_4}(\xi_2)|^2 \|P_{k_1}^{\lambda_1}(f_{n_1}^j)^{\lambda_1}\|_{L^2}^2 \, d\lambda_1 \,d\xi_2 \\
    &\leq C \int_{\mathbb{R}^d } \sum_{k_1= 0}^{\infty} \Big(\int_{\mathbb{R}} (1+\xi_2^2)^{\gamma} |\mathcal{F}_2 \varphi_j^{\alpha} ([k_1]|\lambda_1|, \xi_2)|^2 \,d\xi_2 \Big) \|P_{k_1}^{\lambda_1}(f_{n_1}^j)^{\lambda_1}\|_{L^2}^2 \, d\lambda_1 \\
    &\leq C 2^{-2 j \alpha} 2^{2j\gamma-j} \|f_{n_1}^j\|_{L^2}^2 ,
\end{align*}
where we have used $\sup_{k_1} \|\varphi_j^{\alpha}([k_1]|\lambda_1|, \cdot)\|_{L^2_{\gamma}} \leq C 2^{j\gamma-j/2}$.

With the help of the above estimate we obtain,
\begin{align*}
    \|\chi_{S_{n}^j} \mathcal{B}^{\alpha}_{j,1} (f_{n_1}^j, g_{n_2}^j)\|_{L^2} &\leq C 2^{-j \alpha} 2^{j\gamma-j/2} 2^{j(1+\varepsilon) (d_1+2(d_2-2\gamma))/2} \|f_{n_1}^j\|_{L^2} \|g_{n_2}^j\|_{L^{\infty}} \\
    &\leq C 2^{-j \delta} \|f_{n_1}^j\|_{L^2} \|g_{n_2}^j\|_{L^{\infty}} ,
\end{align*}
where, since $\alpha > (d-1)/2$, we can choose $\varepsilon>0$ very small and $0\leq \gamma <d_2/2$ such that  $\delta=\alpha-((d_1/2+\gamma-1/2)-(d_2-2\gamma)-\varepsilon((d_1+2(d_2-2\gamma))/2)>0$.

\section{Proof of the claim (\ref{claim in the estimate of Balpha j,1}) for \texorpdfstring{$(p_1,p_2,p)=(1,1,1/2)$}{}}
\label{Section: Proof of claim for 1,1 case}

Before discussing the proof, we would like to point out that in the Euclidean context, the boundedness of bilinear Bochner-Riesz means at $(1,1, 1/2)$ follows easily from the H\"older's inequality and the explicit kernel expression of the bilinear Bochner-Riesz means, as shown in (see \cite[Proposition 4.2]{Bernicot_Grafakos_Song_Yan_Bilinear_Bochner_Riesz_2015}). However, in our case, due to the unavailability of explicit kernel formula, the estimate of $\mathcal{B}^{\alpha}_{j,1}$ at $(1,1,1/2)$ is complicated and more involved. To address this, we use some ideas from \cite{Niedorf_Bochner_Riesz_Grushin_2022}.

Recall  from \eqref{Bilinear Bochner-Riesz operator after dyadic decomposition} that we have
\begin{align*}
    \mathcal{B}^{\alpha}_{j,1} (f_{n_1}^j,g_{n_2}^j)(x) &= \frac{1}{(2\pi)^{2 d_2}} \int_{\mathbb{R}^{d_2}} \int_{\mathbb{R}^{d_2}} e^{i(\lambda_1 + \lambda_2)\cdot x''} \sum_{k_1, k_2 = 0}^{\infty} \varphi^{\alpha}_j ([k_1]|\lambda_1|, [k_2]|\lambda_2|) \\
    & \hspace{6cm} P^{\lambda_1} _{k_1} (f_{n_1}^j)^{\lambda_1}(x') P^{\lambda_2} _{k_2} (g_{n_2}^j)^{\lambda_2}(x')\, d\lambda_1 d\lambda_2.
\end{align*}

We proceed by expressing $\varphi_j^{\alpha}$ as a Fourier series on a second variable. To do so, let us fix $\eta_1=[k_1]|\lambda_1|$ and observe that $\supp \varphi_j^{\alpha} (\eta_1, \cdot) \subseteq [0,1]$. By setting $\varphi_j^{\alpha} (\eta_1, \cdot)=0$ on $[-1,0]$, we extend $\varphi_j^{\alpha} (\eta_1, \cdot)$ to a $2$-periodic function on the whole of $\mathbb{R}$. This allows us to expand it in a Fourier series as
\begin{align}
\label{Fourier series decomposition}
    \varphi_j^{\alpha} (\eta_1, \eta_2) = \sum_{l \in \mathbb{Z}} \phi_{j,l}^{\alpha}(\eta_1) e^{i \pi l \eta_2} ,
\end{align}
where $\phi_{j,l}^{\alpha}(\eta_1)= \frac{1}{2} \int_{-1}^1 \varphi_j^{\alpha} (\eta_1, \eta_2) e^{-i \pi l \eta_2}\, d\eta_2$.

Let $\Tilde{\chi} \in C_c^{\infty}(\mathbb{R})$ such that it is $1$ in $[-1,1]$ and $0$ outside $[-2,2]$. Then the above expansion of $\varphi_j^{\alpha}$ yields the following decomposition of $\mathcal{B}^{\alpha}_{j,1}$.
\begin{align}
\label{Equation: Decomposition in terms of Fourier series}
    \mathcal{B}^{\alpha}_{j,1}(f_{n_1}^j, g_{n_2}^j)(x) &= C \sum_{l \in \mathbb{Z}} \Big\{\int_{\mathbb{R}^{d_2}} e^{i\lambda_1 \cdot x''} \sum_{k_1 = 0} ^{\infty} \phi_{j, l}^{\alpha} ([k_1] |\lambda_1|) P^{\lambda_1}_{k_1} (f_{n_1}^j)^{\lambda_1}(x') \, d\lambda_1 \Big\} \\
    &\hspace{2cm} \Big\{ \int_{\mathbb{R}^{d_2}} e^{i\lambda_2 \cdot x''} \sum_{k_2 = 0} ^{\infty} e^{i \pi l [k_2] |\lambda_2|} \Tilde{\chi}([k_2] |\lambda_2|) P^{\lambda_2}_{k_2} (g_{n_2}^j)^{\lambda_2}(x') \, d\lambda_2 \Big\}\nonumber \\
    &=: C \sum_{l \in \mathbb{Z}} \left\{ \phi_{j,l}^{\alpha}(\mathcal{L})f_{n_1}^j(x) \right\} \left\{ \psi_l(\mathcal{L})g_{n_2}^j(x) \right\} \nonumber,
\end{align}
where we have used the fact that $0\leq [k_2]|\lambda_2|\leq 1$ and $\psi_l$ is defined by $\psi_l(\eta_2) := e^{i \pi l \eta_2} \Tilde{\chi}(\eta_2)$. 

Since $d_1, d_2 \geq 1$ and the volume of any ball in Grushin setting depends on both of its radius and its center, we consider the following cases.

\subsection{Case-I : \texorpdfstring{$|a'_n| > \frac{11}{5} 2^{j(1+\varepsilon)}$}{}}
\label{subsection: case I of 1,1 case}
Note that $\alpha(1,1)= \min\{D, d_1+d_2+1\}>d$. Application of H\"older's inequality, together with the fact \eqref{Estimate : Ball volume} and Proposition \ref{Weighted restriction estimate} with $\gamma=0$, we obtain
\begin{align}
\label{Application of Holder in case I for 1,1 case}
    \|\chi_{S_{n}^j} \mathcal{B}^{\alpha}_{j,1} &(f_{n_1}^j,g_{n_2}^j)\|_{L^{1/2}} \leq C |S_{n}^j| \sum_{l \in \mathbb{Z}} \| \phi_{j,l}^{\alpha}(\mathcal{L})f_{n_1}^j\|_{L^2} \| \psi_l(\mathcal{L})g_{n_2}^j \|_{L^2} \\
    &\nonumber\leq C 2^{j d (1 + \varepsilon)} |a_n'|^{d_2} \sum_{l \in \mathbb{Z}} |a_{n_1}'|^{- d_2/2} \|\phi_{j,l} ^{\alpha}\|_{L^{\infty} } \|f_{n_1}^j\|_{L^{1}} |a_{n_2}'|^{- d_2/2}  \|g_{n_2}^j\|_{L^{1}} .
\end{align}
From \eqref{Fourier series decomposition}, one can easily see that for any $\beta>0$,
\begin{align}
\label{Convergence of sum of l using phi}
\sup_{\eta_1 \in [0,1]} |\phi_{j,l} ^{\alpha}(\eta_1)| (1 + |l|)^{1 + \beta} \leq C 2^{-j \alpha} 2^{j \beta} .
\end{align}

As noted in subsection \ref{subsection: case I of infinity infinity}, in this case we have $|a_n'| \sim |a_{n_1}'| \sim |a_{n_2}'|$. Hence, using this fact and \eqref{Convergence of sum of l using phi} from \eqref{Application of Holder in case I for 1,1 case}, we arrive at
\begin{align}
\label{Final estimate of case I for 1,1 case}
    \|\chi_{S_{n}^j} \mathcal{B}^{\alpha}_{j,1} (f_{n_1}^j,g_{n_2}^j)\|_{L^{1/2}}  &\leq C 2^{j d (1 + \varepsilon)} \sum_{l \in \mathbb{Z}} \frac{(1 + |l|)^{1 + \varepsilon} \|\phi_{j,l}^{\alpha}\|_{L^{\infty}}}{(1+|l|)^{(1+\varepsilon)}} \|f_{n_1}^j\|_{L^{1}} \|g_{n_2}^j\|_{L^{1}} \\
    &\nonumber\leq C 2^{j d (1 + \varepsilon)} 2^{-j \alpha} 2^{j \varepsilon} \|f_{n_1}^j\|_{L^{1}} \|g_{n_2}^j\|_{L^{1}} \\
    &\nonumber \leq C 2^{-j \delta} \left\| f_{n_1}^j \right\|_{L^1} \left\| g_{n_2}^j \right\|_{L^1} ,
\end{align}
where $\delta= \alpha-d-\varepsilon(1+d)>0$, since $\alpha> d$, we can choose $\varepsilon>0$ very small such that $\alpha-d-\varepsilon(1+d)>0$.

\subsection{Case-II : \texorpdfstring{$|a'_n| \leq \frac{11}{5} 2^{j(1+\varepsilon)}$}{} and \texorpdfstring{$d_1 \geq d_2$}{}}
Note that $\alpha(1,1)= d$. Using H\"older's inequality we have
\begin{align}
\label{application of Holder in case II in 1,1 case}
    \|\chi_{S_{n}^j} \mathcal{B}^{\alpha}_{j,1} (f_{n_1}^j,g_{n_2}^j)\|_{L^{1/2}} &\leq C \Big\{\int_{B (a_n, \frac{1}{5} 2^{j(1 + \varepsilon)})}  \frac{dx}{|x'|^{(\gamma_1 + \gamma_2)}}  \Big\} \| |\mathbf{P}|^{\gamma_1 + \gamma_2} \mathcal{B}^{\alpha}_{j,1} (f_{n_1}^j,g_{n_2}^j) \|_{L^1} .
\end{align}

Applying H\"older's inequality again, along with Proposition \ref{Weighted restriction estimate} for $0\leq \gamma_1, \gamma_2 <d_2/2$ and the fact \eqref{Convergence of sum of l using phi}, the equation \eqref{Equation: Decomposition in terms of Fourier series} yields
\begin{align*}
 \left\| |\mathbf{P}|^{\gamma_1 + \gamma_2} \mathcal{B}^{\alpha}_{j,1}(f_{n_1}^j, g_{n_2}^j) \right\|_{L^1} &\leq C \sum_{l \in \mathbb{Z}} \left\| |\mathbf{P}|^{\gamma_1} \phi_{j,l}^{\alpha}(\mathcal{L})f_{n_1}^j \right\|_{L^2} \left\| |\mathbf{P}|^{\gamma_2} \psi_l(\mathcal{L})g_{n_2}^j \right\|_{L^2} \\
 &\leq C \sum_{l \in \mathbb{Z}} \|\phi_{j,l}^{\alpha}\|_{L^{\infty}} \|f_{n_1}^j\|_{L^{1}} \|g_{n_2}^j\|_{L^{1}} \leq C 2^{-j\alpha} 2^{j \varepsilon} \|f_{n_1}^j\|_{L^{1}} \|g_{n_2}^j\|_{L^{1}} .
\end{align*}

Plugging the above estimate in \eqref{application of Holder in case II in 1,1 case} and using Lemma \ref{Lemma: Integral of weight over ball} for $0\leq \gamma_1+\gamma_2<d_1$, it follows that
\begin{align*}
    \|\chi_{S_{n}^j} \mathcal{B}^{\alpha}_{j,1} (f_{n_1}^j,g_{n_2}^j)\|_{L^{1/2}} &\leq C 2^{j(1 + \varepsilon)(Q-\gamma_1 - \gamma_2)} 2^{-j\alpha} 2^{j \varepsilon} \|f_{n_1}^j\|_{L^{1}} \|g_{n_2}^j\|_{L^{1}} .
\end{align*}
Since $\alpha>d$ in this case, choosing $\varepsilon>0$ very small for $0 \leq \gamma_1 + \gamma_2 < d_2$, there exists $\delta=\alpha-(Q-\gamma_1-\gamma_2)-\varepsilon(1+Q-\gamma_1-\gamma_2)>0$ such that
\begin{align*}
    \|\chi_{S_{n}^j} \mathcal{B}^{\alpha}_{j,1} (f_{n_1}^j,g_{n_2}^j)\|_{L^{1/2}} &\leq C 2^{-j\delta} \|f_{n_1}^j\|_{L^{1}} \|g_{n_2}^j\|_{L^{1}} .
\end{align*}

\subsection{Case-III : \texorpdfstring{$|a'_n| \leq \frac{11}{5} 2^{j(1+\varepsilon)}$}{} and \texorpdfstring{$d_1, d_2 \geq 1$}{}}
\label{subsection: Case III for 1,1 case}
Note that $\alpha(1,1)= d+1$. Let $\Theta$ be defined as in (\ref{Definition of chi}). Define the functions $\phi_{j,l,M_1} ^{\alpha} : \mathbb{R} \times \mathbb{R} \to \mathbb{C}$ and $\psi_{l,M_2} : \mathbb{R} \times \mathbb{R} \to \mathbb{C}$ by setting
\begin{align*}
    \phi_{j,l,M_1}^{\alpha} (\eta, \tau) = \phi_{j,l}^{\alpha} (\eta) \,\Theta_{M_1}(\tau) \quad \text{and} \quad \psi_{l,M_2}(\eta, \tau) = \psi_{l}(\eta) \,\Theta_{M_2}(\tau) .
\end{align*}
Then we can then write
\begin{align}
\label{Cutoff of M1 for the first linear multiplier}
    \phi_{j,l} ^{\alpha}(\mathcal{L}) f_{n_1}^j &= \sum_{M_1=0 } ^{\infty} \phi_{j,l,M_1}^{\alpha}(\mathcal{L}, T) f_{n_1}^j , \quad \text{and} \quad \psi_{l}(\mathcal{L}) g_{n_2}^j = \sum_{M_2=0 }^{\infty} \psi_{l,M_2}(\mathcal{L}, T) g_{n_2}^j ,
\end{align}
where we have used the fact that, because of $0\leq [k_1]|\lambda_1|, [k_2]|\lambda_2| \leq 1$, the terms in the sum will be non-zero only if $M_1, M_2 \geq 0$. As a result, this leads to the following decomposition.
\begin{align}
\label{Expression: Decomposition of Bj1 into linear}
    & \chi_{S_{n}^j}(x) \mathcal{B}^{\alpha}_{j,1}(f_{n_1}^j, g_{n_2}^j)(x) \\
    &\nonumber= \chi_{S_{n}^j}(x) \left( \sum_{M_1=0 }^{j} \sum_{M_2=0 }^{j} + \sum_{M_1=0 }^{j} \sum_{M_2=j+1 }^{\infty} + \sum_{M_1=j+1 }^{\infty} \sum_{M_2=0 }^{j} + \sum_{M_1=j+ 1 }^{\infty} \sum_{M_2=j+ 1 }^{\infty} \right) \mathcal{B}^{\alpha}_{j,1,M_1, M_2}(f_{n_1}^j, g_{n_2}^j)(x) \\
    &\nonumber:= I_1 + I_2 + I_3 + I_4,
\end{align}
where
\begin{align}
\label{Expression for bilinear Bochner-Riesz j1}
    \mathcal{B}^{\alpha}_{j,1,M_1, M_2}(f_{n_1}^j, g_{n_2}^j)(x) &= \sum_{l\in \mathbb{Z}} \phi_{j,l,M_1}^{\alpha}(\mathcal{L}, T) f_{n_1}^j(x)\, \psi_{l,M_2}(\mathcal{L}, T) g_{n_2}^j(x) .
\end{align}
In the following, we estimate each $I_1$, $I_2$, $I_3$ and $I_4$ individually.
\subsubsection{\textbf{Estimate of} \texorpdfstring{$I_{1}$}{}{}}
\label{Estimate of I1 for 1,1 case}
From the claim \eqref{claim in the estimate of Balpha j,1},  the conditions $|a_n' - a_{n_i}'|\leq 2 \cdot 2^{j(1+\varepsilon)}$ imply  $|a_{n_i}'| \leq \frac{21}{5} 2^{j(1+\varepsilon)}$ for $i=1,2$. Since   
$\supp f_{n_1}^j \subseteq B\left((a'_{n_1}, a''_{n_1}), 2^{j(1+\varepsilon)}/5\right) $ and $\supp g_{n_2}^j \subseteq B\left((a'_{n_2}, a''_{n_2}), 2^{j(1+\varepsilon)}/5\right) $, applying Lemma \ref{Lemma: Decomposition of balls for small radius} gives us the inclusion
\begin{align*}
    B\left((a'_{n_i}, a''_{n_i}), 2^{j(1+\varepsilon)}/5\right) \subseteq B^{|\cdot|}\left(a'_{n_i}, 2^{j(1+\varepsilon)}/5\right) \times B^{|\cdot|}\left(a''_{n_i}, C 2^{2j(1+\varepsilon)}/25\right) .
\end{align*}
Note that in this case $0\leq M_1, M_2 \leq j$, therefore we can decompose $\supp{f_{n_1}^j} \subseteq S_{n_1}^j$ and $\supp{g_{n_2}^j} \subseteq S_{n_2}^j$ as follows,
\begin{align}
\label{Expression: Decomposition of ball into smaller balls}
    S_{n_i}^j &= \bigcup_{m_i=1}^{N_{M_i}} S_{n_i,m_i}^{M_i,j} ,
\end{align}
where $S_{n_i,m_i}^{M_i,j} \subseteq B^{|\cdot|}\left(a_{n_i}', 2^{j(1+\varepsilon)}/5\right) \times B^{|\cdot|}\left(b_{n_i,m_i}^{M_i}, C 2^{(j+M_1)(1+\varepsilon)}/25\right)$ are disjoint subsets and for $m_i \neq m_i'$, we have $|b_{n_i,m_i}^{M_i}-b_{n_i,m_i'}^{M_i}|> C 2^{(j+M_1)(1+\varepsilon)}/50$ for $i=1,2$. Then the number of subsets $N_{M_i}$ in this decomposition is at most $C 2^{(j-M_i)(1+\varepsilon)d_2}$. Moreover, for $\gamma>0$, we also define the set
\begin{align}
\label{Defination of dilated balls}
    \widetilde{S}_{n, n_i,m_i}^{M_i,j} &:= B^{|\cdot|}\left(a_{n}', 2^{j(1+\varepsilon)}/5 \right) \times B^{|\cdot|}\left(b_{n_i,m_i}^{M_i}, C 2^{(j+M_1)(1+\varepsilon)} 2^{\gamma j+1}/25\right) .
\end{align}
With the aid of the above decomposition, correspondingly we also decompose $f_{n_1}^j$ and $g_{n_2}^j$ as follows 
\begin{align}
\label{Decomposition of f and g}
    f_{n_1}^j = \sum_{m_1=1}^{N_{M_1}} f_{n_1,m_1}^{M_1,j} \quad \text{and} \quad g_{n_2}^j = \sum_{m_2=1}^{N_{M_2}} g_{n_2,m_2}^{M_2,j} .
\end{align}
where $f_{n_1,m_1}^{M_1,j} = f_{n_1}^j \chi_{B_{n_1,m_1}^{M_1,j}} $ and $g_{n_2,m_2}^{M_2,j} = g_{n_2}^j \chi_{B_{n_2,m_2}^{M_2,j}}$.

Given the decomposition outlined above, we can now break $I_1$ as
\begin{align}
\label{Decomposition of S1 into S11 and S12 in (1,1,1/2)}
    & I_{1} = \sum_{M_1=0 }^{j} \sum_{M_2=0 }^{j} \sum_{m_1=1}^{N_{M_1}} \sum_{m_2=1}^{N_{M_2}} \chi_{S_{n}^j}(x) \chi_{\widetilde{S}_{n, n_1,m_1}^{M_1,j}}(x) \chi_{\widetilde{S}_{n, n_2,m_2}^{M_2,j}}(x) \mathcal{B}^{\alpha}_{j,1,M_1, M_2}(f_{n_1,m_1}^{M_1,j}, g_{n_2, m_2}^{M_2,j})(x) \\
    &\nonumber+ \sum_{M_1=0 }^{j} \sum_{M_2=0 }^{j} \sum_{m_1=1}^{N_{M_1}} \sum_{m_2=1}^{N_{M_2}} \chi_{S_{n}^j}(x) \chi_{\widetilde{S}_{n, n_1,m_1}^{M_1,j}}(x) (1-\chi_{\widetilde{S}_{n, n_2,m_2}^{M_2,j}})(x) \mathcal{B}^{\alpha}_{j,1,M_1, M_2}(f_{n_1,m_1}^{M_1,j}, g_{n_2,m_2}^{M_2,j})(x) \\
    &\nonumber+ \sum_{M_1=0 }^{j} \sum_{M_2=0 }^{j} \sum_{m_1=1}^{N_{M_1}} \chi_{S_{n}^j}(x) (1-\chi_{\widetilde{S}_{n, n_1,m_1}^{M_1,j}})(x) \mathcal{B}^{\alpha}_{j,1,M_1, M_2}(f_{n_1,m_1}^{M_1,j}, g_{n_2}^{j})(x) \\
    &\nonumber=: I_{11} + I_{12} + I_{13} .
\end{align}

\subsubsection{\textbf{Estimate of} \texorpdfstring{$I_{11}$}{}{}}
Because of \eqref{Expression for bilinear Bochner-Riesz j1}, we can express
\begin{align*}
    I_{11} &=  \sum_{M_1=0 }^{j} \sum_{M_2=0 }^{j} \sum_{l \in \mathbb{Z}} \Big\{ \sum_{m_1=1}^{N_{M_1}}  \chi_{\widetilde{S}_{n,n_1,m_1}^{M_1,j}}(x)  \phi_{j,l,M_1}^{\alpha}(\mathcal{L}, T) f_{n_1,m_1}^{M_1,j}(x) \Big\} \\
    & \hspace{6cm} \Big\{ \sum_{m_2=1}^{N_{M_2}} \chi_{\widetilde{S}_{n,n_2,m_2}^{M_2,j}}(x) \psi_{l,M_2}(\mathcal{L}, T) g_{n_2,m_2}^{M_2,j}(x) \Big\} .
\end{align*}
Firs note that for $0<p<1$, $\|\cdot\|_{L^p}^p$ satisfies
\begin{align}
\label{Triangle inequality for p less than 1 case}
    \|f+g\|_{L^p}^p &\leq \|f\|_{L^p}^p + \|g\|_{L^p}^p .
\end{align}
With the help of the above observation and H\"older's inequality, we obtain 
\begin{align}
\label{Estimate: Estimate of S_{111}}
     \|I_{11}\|_{L^{1/2}}^{1/2} &\leq \sum_{M_1=0 }^{j} \sum_{M_2=0 }^{j} \sum_{l \in \mathbb{Z} } \Big\| \sum_{m_1=1}^{N_{M_1}}  \chi_{\widetilde{S}_{n,n_1,m_1}^{M_1,j}}(x)  \phi_{j,l,M_1}^{\alpha}(\mathcal{L}, T) f_{n_1,m_1}^{M_1,j}\Big\|_{L^{1}}^{1/2} \\
     &\nonumber \hspace{4cm} \Big\| \sum_{m_2=1}^{N_{M_2}} \chi_{\widetilde{S}_{n,n_2,m_2}^{M_2,j}}(x) \psi_{l,M_2}(\mathcal{L}, T) g_{n_2,m_2}^{M_2,j}\Big\|_{L^{1}}^{1/2} .
\end{align}
Now, invoking H\"older's inequality, \eqref{Defination of dilated balls} and Proposition \ref{Lemma: Martini_Mullar_Weighted_Plancherel}, it immediately follows that 
\begin{align}
\label{Estimate of FlM1 for (1,2,2/3)}
    & \Big\| \sum_{m_1=1}^{N_{M_1}}  \chi_{\widetilde{S}_{n,n_1,m_1}^{M_1,j}}(x)  \phi_{j,l,M_1}^{\alpha}(\mathcal{L}, T) f_{n_1,m_1}^{M_1,j}\Big\|_{L^{1}} \leq C \sum_{m_1=1}^{N_{M_1}} |\widetilde{S}_{n,n_1,m_1}^{M_1,j}|^{1/2} \|\phi_{j,l,M_1}^{\alpha}(\mathcal{L}, T) f_{n_1,m_1}^{M_1,j} \|_{L^2} \\
    &\nonumber \leq C \sum_{m_1=1}^{N_{M_1}} 2^{j(1+ \varepsilon)d_1/2}   2^{(j+M_1)(1+\varepsilon)d_2/2} 2^{\gamma j d_2/2} 2^{-M_1 d_2/2} \|\phi_{j,l}^{\alpha}\|_{L^{\infty}} \| f_{n_1,m_1}^{M_1,j} \|_{L^1} \\
    &\nonumber \leq C 2^{\varepsilon M_1 d_2/2} 2^{\gamma j d_2/2} 2^{j(1+ \varepsilon)d/2} \|\phi_{j,l}^{\alpha}\|_{L^{\infty}} \| f_{n_1}^j\|_{L^1} .
\end{align}
By a similar reasoning, we also get
\begin{align}
\label{Estimate of GlM1 for (1,2,2/3)}
    \Big\| \sum_{m_2=1}^{N_{M_2}} \chi_{\widetilde{S}_{n,n_2,m_2}^{M_2,j}}(x) \psi_{l,M_2}(\mathcal{L}, T) g_{n_2,m_2}^{M_2,j}\Big\|_{L^{1}} &\leq C 2^{\varepsilon M_2 d_2/2} 2^{\gamma j d_2/2} 2^{j(1+ \varepsilon)d/2} \| g_{n_2}^j\|_{L^1} .
\end{align}
Plugging the estimates \eqref{Estimate of FlM1 for (1,2,2/3)} and \eqref{Estimate of GlM1 for (1,2,2/3)} into  (\ref{Estimate: Estimate of S_{111}}) yields
\begin{align}
\label{After plugging the estimate Fl and Gl}
    \|I_{11}\|_{L^{1/2}} ^{1/2} &\leq C \sum_{M_1=0 }^{j} \sum_{M_2=0 }^{j} \sum_{l \in \mathbb{Z}} 2^{\varepsilon (M_1 +M_2) d_2/4} 2^{\gamma j d_2/2} 2^{j(1+ \varepsilon)d/2} \|\phi_{j,l}^{\alpha}\|_{L^{\infty}}^{1/2} \| f_{n_1}^j\|_{L^1}^{1/2} \| g_{n_2}^j \|_{L^1}^{1/2} \\
    &\nonumber \leq C \sum_{l \in \mathbb{Z}} 2^{C \epsilon_1 j} 2^{\gamma j d_2/2} 2^{j d/2} \|\phi_{j,l}^{\alpha}\|_{L^{\infty}}^{1/2} \| f_{n_1}^j\|_{L^1}^{1/2} \| g_{n_2}^j \|_{L^1}^{1/2},
\end{align}
for some $\epsilon_1>0$ depending on $\varepsilon>0$. Also notice that using (\ref{Convergence of sum of l using phi}) and summing over $l \in \mathbb{Z}$, gives us
\begin{align*}
    \sum_{l \in \mathbb{Z}} \|\phi_{j,l}^{\alpha}\|_{L^{\infty}}^{1/2} &\leq \sum_{l \in \mathbb{Z}} \frac{1}{(1+|l|)^{1+\varepsilon}} \{(1+|l|)^{2+2\varepsilon} \|\phi_{j,l}^{\alpha}\|_{L^{\infty}} \}^{1/2} \leq C 2^{-j \alpha/2} 2^{j(1/2+\varepsilon)} .
\end{align*}
Finally, putting the above estimate in \eqref{After plugging the estimate Fl and Gl} and since $\alpha>d+1$, we can choose $\varepsilon, \epsilon_1, \gamma>0$ small enough such that
\begin{align*}
    \|I_{11}\|_{L^{1/2}} &\leq C 2^{-j \delta} \| f_{n_1}^j\|_{L^1} \| g_{n_2}^j \|_{L^1} ,
\end{align*}
where $\delta=\alpha-(d+1)-2C\epsilon_1-2\varepsilon-\gamma d_2>0$.

\subsubsection{\textbf{Estimate of} \texorpdfstring{$I_{12}$}{}{}}
\label{subsection: estimate of I12 for 1,1 case}
In this case, with the help of Proposition \ref{Lemma: Martini_Mullar_Weighted_Plancherel}, we show that $I_{12}$ has an arbitrary large decay. Note that since $S_{n}^j \subseteq B(a_n, 2^{j(1+\varepsilon)}/5)$ and $|a'_n| \leq \frac{11}{5} 2^{j(1+\varepsilon)}$, aplying \eqref{Estimate : Ball volume} yields $|S_{n}^j| \lesssim 2^{j(1+\varepsilon)Q}$. Hence, from \eqref{Decomposition of S1 into S11 and S12 in (1,1,1/2)}, and applying H\"older's inequality we obtain 
\begin{align}
\label{Estimate: Estimate of S_{12} in 1,1}
    \|I_{12}\|_{L^{1/2}} & \leq C 2^{j(1+\varepsilon)Q} \sum_{M_1=0 }^{j} \sum_{M_2=0 }^{j} \sum_{m_1=1}^{N_{M_1}} \sum_{m_2=1}^{N_{M_2}}  \big\|\chi_{S_{n}^j} (1-\chi_{\widetilde{S}_{n, n_2,m_2}^{M_2,j}}) \mathcal{B}^{\alpha}_{j,1,M_1, M_2}(f_{n_1,m_1}^{M_1,j}, g_{n_2,m_2}^{M_2,j}) \big\|_{L^1} .
\end{align}
Recalling the expression \eqref{Expression for bilinear Bochner-Riesz j1} and again using H\"older's inequality, we see that
\begin{align}
\label{Estimate: Application of Holder inequality for error part in 1,1}
    & \|\chi_{S_{n}^j} (1-\chi_{\widetilde{S}_{n, n_2,m_2}^{M_2,j}}) \mathcal{B}^{\alpha}_{j,1,M_1, M_2}(f_{n_1,m_1}^{M_1,j}, g_{n_2,m_2}^{M_2,j})\|_{L^{1}} \\
    &\nonumber \leq C \sum_{l \in \mathbb{Z}} \|\phi_{j,l,M_1} ^{\alpha}(\mathcal{L}, T) f_{n_1,m_1}^{M_1,j}\|_{L^2} \|\chi_{S_{n}^j} (1-\chi_{\widetilde{S}_{n, n_2,m_2}^{M_2,j}})\psi_{l,M_2}(\mathcal{L}, T) g_{n_2,m_2}^{M_2,j}\|_{L^2} .
\end{align}
In the following, we show that the term $\|\chi_{S_{n}^j} (1-\chi_{\widetilde{S}_{n, n_2,m_2}^{M_2,j}})\psi_{l,M_2}(\mathcal{L}, T) g_{n_2,m_2}^{M_2,j}\|_{L^2}$ gives arbitrary large decay in this scenario. An application of Minkowski's integral inequality yields
\begin{align}
\label{Estimate: Application of Minkowski inequality for error part in 1,1}
    & \|\chi_{S_{n}^j} (1-\chi_{\widetilde{S}_{n, n_2,m_2}^{M_2,j}})\psi_{l,M_2}(\mathcal{L}, T) g_{n_2,m_2}^{M_2,j}\|_{L^2} \\
    &\nonumber \leq \int_{\mathbb{R}^d} |g_{n_2,m_2}^{M_2,j}(z)| \left( \int_{\mathbb{R}^d} |\chi_{S_{n}^j}(x) (1-\chi_{\widetilde{S}_{n, n_2,m_2}^{M_2,j}})(x) \mathcal{K}_{\psi_{l,M_2}(\mathcal{L}, T)}(x,z)|^2 \ dx \right)^{1/2} dz .
\end{align}
From \eqref{Defination of dilated balls}, we see that if $x \in \supp{\chi_{S_{n}^j} (1-\chi_{\widetilde{S}_{n, n_2,m_2}^{M_2,j}})}$, then $|x''-b_{n_2,m_2}^{M_2}| \geq C 2^{\gamma j+1} 2^{(j+M_2)(1+\varepsilon)}$, and if $z \in \supp{g_{n_2,m_2}^{M_2,j}}$, then $|z''-b_{n_2,m_2}^{M_2}| \leq C 2^{(j+M_2)(1+\varepsilon)}$. These in particular imply $|x''-z''| \geq C 2^{\gamma j} 2^{(j+M_2)(1+\varepsilon)}$. Consequently, using Proposition \ref{Lemma: Martini_Mullar_Weighted_Plancherel}, we derive
\begin{align}
\label{Estimate: Weighted plancherel on outside ball in (1,1,1/2)}
    & \left( \int_{\mathbb{R}^d} |\chi_{S_{n}^j}(x) (1-\chi_{\widetilde{S}_{n, n_2,m_2}^{M_2,j}})(x) \mathcal{K}_{\psi_{l,M_2}(\mathcal{L}, T)}(x,z)|^2 \ dx \right)^{1/2} \\
    &\leq C (2^{\gamma j} 2^{(j+M_2)(1+\varepsilon)} )^{-N} \left( \int_{\mathbb{R}^d} | |x''-z''|^N \mathcal{K}_{\psi_{l,M_2}(\mathcal{L}, T)}(x,z)|^2 \ dx \right)^{1/2}\nonumber \\
    &\leq C (2^{\gamma j} 2^{(j+M_2)(1+\varepsilon)} )^{-N} 2^{M_2(N-d_2/2)} \|\psi_{l}\|_{L^2_N}\nonumber \\
    &\leq C 2^{-\gamma j N} 2^{-j N} |l|^N \nonumber.
\end{align}
Therefore, plugging the estimate \eqref{Estimate: Weighted plancherel on outside ball in (1,1,1/2)} into (\ref{Estimate: Application of Minkowski inequality for error part in 1,1}) gives us
\begin{align}
\label{Estimate: Outside of M1 ball for f in (1,1,1/2)}
    \|\chi_{S_{n}^j} (1-\chi_{\widetilde{S}_{n, n_2,m_2}^{M_2,j}})\psi_{l,M_2}(\mathcal{L}, T) g_{n_2,m_2}^{M_2,j}\|_{L^2} &\leq C 2^{-\gamma j N} 2^{-j N} |l|^N \|g_{n_2,m_2}^{M_2,j}\|_{L^1} .
\end{align}
Taking the above estimate into account and Proposition \ref{Lemma: Martini_Mullar_Weighted_Plancherel}, we see from (\ref{Estimate: Application of Holder inequality for error part in 1,1}) that
\begin{align}
\label{Estimate: Estimate of error part wrt f for (1,1,1/2)}
    & \|\chi_{S_{n}^j} (1-\chi_{\widetilde{S}_{n, n_2,m_2}^{M_2,j}}) \mathcal{B}^{\alpha}_{j,1,M_1, M_2}(f_{n_1,m_1}^{M_1,j}, g_{n_2,m_2}^{M_2,j})\|_{L^{1}} \\
    &\nonumber \leq C \sum_{l \in \mathbb{Z}} 2^{-M_1 d_2/2} \|\phi_{j,l} ^{\alpha}\|_{L^{\infty}} \|f_{n_1,m_1}^{M_1,j}\|_{L^1} 2^{-\gamma j N} 2^{-j N} |l|^N \|g_{n_2,m_2}^{M_2,j}\|_{L^1} \\
    &\nonumber \leq C 2^{j \varepsilon} 2^{-j \alpha} 2^{-\gamma j N} \|f_{n_1,m_1}^{M_1,j}\|_{L^1} \|g_{n_2,m_2}^{M_2,j}\|_{L^1} ,
\end{align}
where we have used $2^{-j N} \sum_{l \in \mathbb{Z}} (1+|l|)^{-(1+\varepsilon)} \{(1+|l|)^{1+\varepsilon} |l|^N \|\phi_{j,l} ^{\alpha}\|_{L^{\infty}}\} \leq C 2^{-j \alpha} 2^{j \varepsilon}$ (see (\ref{Convergence of sum of l using phi})).

Finally, choosing $N$ large enough and combining the estimates (\ref{Estimate: Estimate of S_{12} in 1,1}) and \eqref{Estimate: Estimate of error part wrt f for (1,1,1/2)}, we can conclude that
\begin{align}
\label{Estimate of S_12 for (1,1,1/2)}
    \|I_{12}\|_{L^{1/2}} & \leq C  2^{j(1+\varepsilon)Q} 2^{j \varepsilon} 2^{-j \alpha} 2^{-\gamma j N} \sum_{M_1=0 }^{j} \sum_{M_2=0 }^{j} \sum_{m_1=1}^{N_{M_1}} \|f_{n_1,m_1}^{M_1,j}\|_{L^1}  \sum_{m_2=1}^{N_{M_2}} \|g_{n_2,m_2}^{M_2,j}\|_{L^1}  \\
    & \nonumber \leq C 2^{-j \delta} \|f_{n_1}^j\|_{L^1} \|g_{n_2}^j\|_{L^1} ,
\end{align}
for some $\delta>0$.

This completes the estimate of $I_{12}$. The estimation of $I_{13}$ follows similarly as of $I_{12}$ (see subsection \ref{subsection: estimate of I12 for 1,1 case}) with obvious modification, therefore we omit the details and with this estimate of $I_1$ is completed. Since $I_2$ and $I_3$ are symmetric with respect to $M_1$ and $M_2$ (see \eqref{Expression: Decomposition of Bj1 into linear}), it is enough to estimate $I_2$. Therefore, it remains to estimate $I_2$ and $I_4$. We begin with the estimation of $I_4$.

\subsubsection{\textbf{Estimate of} \texorpdfstring{$I_{4}$}{}{}}
\label{Estimate of I4 for 1,1 case}
To estimate $I_4$ (see \eqref{Expression: Decomposition of Bj1 into linear}), as in \eqref{Estimate: Estimate of S_{12} in 1,1} and \eqref{Estimate: Application of Holder inequality for error part in 1,1}, application of H\"older's inequality yields the following
\begin{align*}
    \|I_{4}\|_{L^{1/2}} & \leq C 2^{jQ(1+\varepsilon)} \sum_{l \in \mathbb{Z}} \sum_{M_1=j+ 1 }^{\infty} \sum_{M_2=j+ 1 }^{\infty} \| \phi_{j,l, M_1}^{\alpha}(\mathcal{L}, T)f_{n_1}^j \|_{L^2} \| \psi_{l,M_2}(\mathcal{L}, T) g_{n_2}^j \|_{L^2} .
\end{align*}
Hence, using Proposition \ref{Lemma: Martini_Mullar_Weighted_Plancherel} and summing over $l \in \mathbb{Z}$ with the estimate from \eqref{Convergence of sum of l using phi}, we get
\begin{align}
\label{Estimate of S4 in (1,1,1/2)}
    \|I_{4}\|_{L^{1/2}} & \leq C \sum_{l \in \mathbb{Z}} \sum_{M_1=j+ 1 }^{\infty} \sum_{M_2=j+ 1 }^{\infty} 2^{jQ(1+\varepsilon)} 2^{-M_1 d_2/2} \|\phi_{j,l}^{\alpha}\|_{L^{\infty}} \| f_{n_1}^j\|_{L^1} 2^{-M_2 d_2/2} \| g_{n_2}^j \|_{L^1} \\
    &\nonumber \leq C 2^{j\varepsilon Q} 2^{-j \alpha} 2^{jd} \| f_{n_1}^j\|_{L^1} \| g_{n_2}^j\|_{L^1} \\
    &\nonumber \leq C 2^{-j \delta} \| f_{n_1}^j\|_{L^1} \| g_{n_2}^j\|_{L^1} ,
\end{align}
where $\delta=\alpha-d-\varepsilon Q>0$, since $\alpha>d$.

\subsubsection{\textbf{Estimate of} \texorpdfstring{$I_{2}$}{}{}}
To estimate $I_2$, we split it into two parts as follows:
\begin{align*}
    I_2 &= \chi_{S_{n}^j}(x) \left( \sum_{M_1=0 }^{j} \sum_{M_2=j+1 }^{2j} + \sum_{M_1=0 }^{j} \sum_{M_2=2j+ 1 } ^{\infty} \right) \mathcal{B}^{\alpha}_{j,1,M_1, M_2}(f_{n_1}^j, g_{n_2}^j)(x) \\
    &= I_{21} + I_{22} .
\end{align*}

The estimation of $I_{22}$ follows similarly to the  estimation of $I_4$ (see \eqref{Estimate of I4 for 1,1 case}), utilizing the fact $M_2>2j$. While the estimation of $I_{21}$ can be handled similar to the estimate of $I_1$ (see \eqref{Estimate of I1 for 1,1 case}), with the key difference being that in this case,  we do not decompose $\supp g_{n_2}^j \subseteq S_{n_2}^j$ (see \eqref{Expression: Decomposition of ball into smaller balls}). This is because in $I_{21}$, $ M_2$ is bigger than $j$, and the support of $g_{n_2}^j$ is already contained in $B\left(a_{n_2}, 2^{j(1+\varepsilon)}/5\right)$. With this modification, the proof of $I_{21}$ can be completed analogously, as the estimate of $I_1$ (see \eqref{Estimate of I1 for 1,1 case}).

This completes the proof of claim \eqref{claim in the estimate of Balpha j,1} for $(p_1, p_2,p)=(1,1,1/2)$.

\section{Proof of the claim (\ref{claim in the estimate of Balpha j,1}) for \texorpdfstring{$(p_1,p_2,p)=(1,2,2/3)$}{}}
The structure of the proof follows a similar approach to the one already done for the point $(p_1,p_2,p)=(1,1,1/2)$ (see Section \ref{Section: Proof of claim for 1,1 case}). Accordingly,  we divide the proof into the following three cases.

\subsection{Case-I : \texorpdfstring{$|a'_n| > \frac{11}{5} 2^{j(1+\varepsilon)}$}{}}
Note that $\alpha(1,2)= \min\{D, d_1+d_2+1\}/2>d/2$. Applying H\"older's inequality twice, along with the fact \eqref{Estimate : Ball volume}, we deduce 
from \eqref{Equation: Decomposition in terms of Fourier series} that
\begin{align*}
    \|\chi_{S_{n}^j} \mathcal{B}^{\alpha}_{j,1} (f_{n_1}^j,g_{n_2}^j)\|_{L^{2/3}} &\leq C 2^{j d (1 + \varepsilon)/2} |a_n'|^{d_2/2} \sum_{l \in \mathbb{Z}} \| \phi_{j,l}^{\alpha}(\mathcal{L})f_{n_1}^j \|_{L^2} \| \psi_l(\mathcal{L})g_{n_2}^j \|_{L^2} .
\end{align*}
Note that, as observed in subsection \ref{subsection: case I of 1,1 case}, we have $|a_n'| \sim |a_{n_1}'|$. Therefore, applying Proposition \ref{Weighted restriction estimate} with $\gamma=0$ and estimate \eqref{Convergence of sum of l using phi}, we obtain 
\begin{align*}
    \|\chi_{S_{n}^j} \mathcal{B}^{\alpha}_{j,1} (f_{n_1}^j,g_{n_2}^j)\|_{L^{2/3}} &\leq C 2^{j d (1 + \varepsilon)/2} |a_n'|^{d_2/2} \sum_{l \in \mathbb{Z}} |a_{n_1}'|^{- d_2/2} \|\phi_{j,l} ^{\alpha}\|_{L^{\infty} } \|f_{n_1}^j\|_{L^{1}} \|g_{n_2}^j\|_{L^{2}} \\
    &\leq C 2^{-j \alpha} 2^{j \varepsilon} 2^{j d (1 + \varepsilon)/2} \|f_{n_1}^j\|_{L^{1}} \|g_{n_2}^j\|_{L^{2}} .
\end{align*}
The condition $\alpha > d/2$ allows us to choose $\varepsilon>0$ small enough to ensure that $\delta=\alpha-d/2-\varepsilon(1+d/2)>0$. As a result, we obtain the bound  
\begin{align*}
    \|\chi_{S_{n}^j} \mathcal{B}^{\alpha}_{j,1} (f_{n_1}^j,g_{n_2}^j)\|_{L^{2/3}} \leq C 2^{-j \delta} \|f_{n_1}^j\|_{L^{1}} \|g_{n_2}^j\|_{L^{2}} .
\end{align*}

\subsection{Case-II : \texorpdfstring{$|a'_n| \leq \frac{11}{5} 2^{j(1+\varepsilon)}$}{} and \texorpdfstring{$d_1 \geq d_2$}{}}
Note that $\alpha(1,2)= d/2$. In this case, using first-layer weight and applying H\"older's inequality leads to
\begin{align*}
    \|\chi_{S_{n}^j} \mathcal{B}^{\alpha}_{j,1} (f_{n_1}^j,g_{n_2}^j)\|_{L^{2/3}} &\leq C \Big(\int_{B (a_n, \frac{1}{5} 2^{j(1 + \varepsilon)})} \frac{dx}{|x'|^{2\gamma}}  \Big)^{1/2} \| |\mathbf{P}|^{\gamma} \mathcal{B}^{\alpha}_{j,1} (f_{n_1}^j,g_{n_2}^j)\|_{L^1} .
\end{align*}
Now, an use of H\"older's inequality, Proposition \ref{Weighted restriction estimate} for $0\leq \gamma <d_2/2$ and \eqref{Convergence of sum of l using phi} yields 
\begin{align*}
 \| |\mathbf{P}|^{\gamma} \mathcal{B}^{\alpha}_{j,1}(f_{n_1}^j, g_{n_2}^j) \|_{L^1} &\leq C \sum_{l \in \mathbb{Z}} \left\||\mathbf{P}|^{\gamma} \phi_{j,l}^{\alpha}(\mathcal{L})f_{n_1}^j \right\|_{L^2} \left\| \psi_l(\mathcal{L})g_{n_2}^j \right\|_{L^2} \\
 &\leq C 2^{-j \alpha} 2^{j \varepsilon} \|f_{n_1}^j\|_{L^{1}} \|g_{n_2}^j\|_{L^{2}} .
\end{align*}
Taking into account the above two estimates and invoking Lemma \ref{Lemma: Integral of weight over ball} for $0\leq \gamma<d_2/2$, we are led to conclude that
\begin{align*}
    \|\chi_{S_{n}^j} \mathcal{B}^{\alpha}_{j,1} (f_{n_1}^j,g_{n_2}^j)\|_{L^{2/3}}   &\leq C 2^{j(1 + \varepsilon)(Q/2-\gamma)} 2^{-j \alpha} 2^{j \varepsilon} \|f_{n_1}^j\|_{L^{1}} \|g_{n_2}^j\|_{L^{2}} \\
    &\leq C 2^{-j \delta} \|f_{n_1}^j\|_{L^{1}} \|g_{n_2}^j\|_{L^{2}} ,
\end{align*}
where $\delta=\alpha-(Q/2-\gamma)-\varepsilon(1+Q/2-\gamma)$, as $\alpha > d/2$, we can choose $\varepsilon>0$ so small and $0 \leq \gamma < d_2/2$ such that $\alpha-(Q/2-\gamma)-\varepsilon(1+Q/2-\gamma)>0$.

\subsection{Case-III : \texorpdfstring{$|a'_n| \leq \frac{11}{5} 2^{j(1+\varepsilon)}$}{} and \texorpdfstring{$d_1, d_2 \geq 1$}{}}

Note that $\alpha(1,2)= (d+1)/2$. This proof closely resembles to Case-III of $(1,1,1/2)$ (see subsection \ref{subsection: Case III for 1,1 case}). Using the same decomposition as in (\ref{Cutoff of M1 for the first linear multiplier}), but applied only to $\phi_{j,l}^{\alpha}$, we arrive at
\begin{align}
\label{Decomposition of phi into M1 cutoff}
    & \chi_{S_{n}^j}(x) \mathcal{B}^{\alpha}_{j,1}(f_{n_1}^j, g_{n_2}^j)(x) \\
    &\nonumber = C \chi_{S_{n}^j}(x) \Big(\sum_{M_1=0 }^{j} + \sum_{M_1=j+1 }^{\infty} \Big) \sum_{l \in \mathbb{Z}} \phi_{j,l,M_1}^{\alpha}(\mathcal{L}, T) f_{n_1}(x) \psi_{l}(\mathcal{L}) g_{n_2}(x) =: J_1 + J_2 .
\end{align}

\subsubsection{\textbf{Estimate of} \texorpdfstring{$J_{1}$}{}{}}
\label{subsection: Estimate of J1 case}
To estimate $J_1$, similar to \eqref{Expression: Decomposition of ball into smaller balls}- \eqref{Decomposition of f and g}, but here we only decompose the support of $f_{n_1}^j$ and the corresponding function itself. Consequently, analogous to \eqref{Decomposition of S1 into S11 and S12 in (1,1,1/2)}, we break $J_1$ into two parts as
\begin{align*}
    J_{1} &= \sum_{M_1=0 }^{j} \sum_{m_1=1}^{N_{M_1}} \sum_{l\in \mathbb{Z}} \chi_{S_{n}^j}(x) \chi_{\widetilde{S}_{n,n_1,m_1}^{M_1,j}}(x) \phi_{j,l,M_1}^{\alpha}(\mathcal{L}, T) f_{n_1, m_1}^{M_1,j}(x)\, \psi_{l}(\mathcal{L}) g_{n_2}^j(x) \\
    &+ \sum_{M_1=0 }^{j}  \sum_{m_1=1}^{N_{M_1}} \sum_{l\in \mathbb{Z}} \chi_{S_{n}^j}(x) (1-\chi_{\widetilde{S}_{n,n_1,m_1}^{M_1,j}})(x) \phi_{j,l,M_1}^{\alpha}(\mathcal{L}, T) f_{n_1, m_1}^{M_1,j}(x)\, \psi_{l}(\mathcal{L}) g_{n_2}^j(x) =: J_{11} + J_{12} .
\end{align*}

\subsubsection{\textbf{Estimate of} \texorpdfstring{$J_{11}$}{}{}}
An application of the fact \eqref{Triangle inequality for p less than 1 case} along with the H\"older's inequality gives
\begin{align*}
    \|J_{11}\|_{L^{2/3}}^{2/3} &\leq \sum_{M_1=0 }^{j} \sum_{l \in \mathbb{Z}} \Big\| \sum_{m_1=1}^{N_{M_1}} \chi_{\widetilde{S}_{n,n_1,m_1}^{M_1,j}}  \phi_{j,l,M_1}^{\alpha}(\mathcal{L}, T) f_{n_1,m_1}^{M_1,j} \Big\|_{L^1}^{2/3} \|\psi_{l}(\mathcal{L}) g_{n_2,j} \|_{L^2}^{2/3} .
\end{align*}
In turn, from the estimate (\ref{Estimate of FlM1 for (1,2,2/3)}) we get
\begin{align}
\label{Estimate of S111 for (1,2,2/3) for triangle inequality}
    \|J_{11}\|_{L^{2/3}} & \leq C \left(\sum_{M_1=0 }^{j} \sum_{l \in \mathbb{Z}} 2^{\varepsilon M_1 d_2/3} 2^{\gamma j d_2/3} 2^{j(1+ \varepsilon)d/3} \|\phi_{j,l}^{\alpha}\|_{L^{\infty} }^{2/3} \| f_{n_1}^j\|_{L^1}^{2/3} \| g_{n_2}^j\|_{L^{2}}^{2/3} \right)^{3/2}    \\
    &\nonumber \leq C 2^{j(1+ \varepsilon)d/2} 2^{\gamma j d_2/2} \| f_{n_1}^j\|_{L^1}  \| g_{n_2}^j\|_{L^{2}} \left(\sum_{M_1=0 }^{j} 2^{\varepsilon M_1 d_2/3} \sum_{l \in \mathbb{Z}} \|\phi_{j,l}^{\alpha}\|_{L^{\infty} } ^{2/3}  \right)^{3/2} .
\end{align}
 In the light of (\ref{Convergence of sum of l using phi}), we can observe that
\begin{align}
\label{sum of l for 1,2 case}
    \sum_{l \in \mathbb{Z}} \|\phi_{j,l}^{\alpha}\|_{L^{\infty}}^{2/3} &\leq \sum_{l \in \mathbb{Z}} \frac{1}{(1+|l|)^{1+\varepsilon}} \{(1+|l|)^{3/2+3\varepsilon/2} \|\phi_{j,l}^{\alpha}\|_{L^{\infty}} \}^{2/3} \leq C 2^{-j 2\alpha/3} 2^{j(1/3+\varepsilon)} .
\end{align}
Thus, plugging the above estimate into \eqref{Estimate of S111 for (1,2,2/3) for triangle inequality} yields
\begin{align*}
    \|J_{11}\|_{L^{2/3}} &\leq C 2^{C \epsilon_1 j} 2^{\gamma j d_2/2} 2^{j d/2} 2^{-j \alpha} 2^{j(1/2+3\varepsilon/2)} \|f_{n_1}^j\|_{L^1} \|g_{n_2}^j\|_{L^2} \\
    &\leq C 2^{-j \delta} \|f_{n_1}^j\|_{L^1} \|g_{n_2}^j\|_{L^2} ,
\end{align*}
where $\epsilon_1$ depends on $\varepsilon$ and $\delta=\alpha-(d+1)/2-\gamma d_2/2-C \epsilon_1-3\varepsilon/2>0$. Since $\alpha>(d+1)/2$, we can choose $\gamma$, $\varepsilon$, $\epsilon_1>0$ sufficiently small such that $\alpha-(d+1)/2-\gamma d_2/2-C \epsilon_1-3\varepsilon/2>0$.

\subsubsection{\textbf{Estimate of} \texorpdfstring{$J_{12}$}{}{}}
\label{Estimate of J12 for 1,2 case} The estimate for $J_{12}$ is similar to that of $I_{12}$ (see \eqref{subsection: estimate of I12 for 1,1 case}) or $I_{13}$, so we will be very brief. Applying H\"older's inequality twice yields
\begin{align*}
    \|J_{12}\|_{L^{2/3}} & \leq C 2^{j(1+\varepsilon)Q/2} \sum_{M_1=0 }^{j}  \sum_{m_1=1}^{N_{M_1}} \sum_{l \in \mathbb{Z}} \|\chi_{S_{n}^j} (1-\chi_{\widetilde{S}_{n,n_1,m_1}^{M_1,j}}) \phi_{j,l,M_1}^{\alpha}(\mathcal{L}, T) f_{n_1,m_1}^{M_1,j} \|_{L^2} \| \psi_{l}(\mathcal{L}) g_{n_2}^j \|_{L^2} .
\end{align*}
The $L^2$-norm on the right hand side corresponding to $f_{n_1,m_1}^{M_1,j}$ can be calculated in a similar manner to \eqref{Estimate: Outside of M1 ball for f in (1,1,1/2)}. Indeed, here for any $N>0$ we obtain
\begin{align}
\label{L1 to L2 norm for phijl on outside}
    \|\chi_{S_{n}^j} (1-\chi_{\widetilde{S}_{n,n_1,m_1}^{M_1,j}}) \phi_{j,l,M_1}^{\alpha}(\mathcal{L}, T) f_{n_1,m_1}^{M_1,j} \|_{L^2} &\leq C 2^{-\gamma j N} \|\phi_{j,l}^{\alpha}\|_{L^{\infty}} \|f_{n_1,m_1}^{M_1,j}\|_{L^1} .
\end{align}
Therefore, summing over $l \in \mathbb{Z}$ via \eqref{Convergence of sum of l using phi}, we can choose $N>0$ sufficiently large such that there exists a $\delta>0$ and
\begin{align*}
    \|J_{12}\|_{L^{2/3}} &\leq C 2^{-j \delta} \|f_{n_1}^{j}\|_{L^1} \|g_{n_2}^j\|_{L^2} .
\end{align*}

\subsubsection{\textbf{Estimate of} \texorpdfstring{$J_{2}$}{}{}}
\label{Estimate of J2 for 1,2 case}
This estimate is analogous to that of $I_4$ (see \eqref{Estimate of I4 for 1,1 case}) and can be easily derived from Proposition \ref{Lemma: Martini_Mullar_Weighted_Plancherel}. In particular, applying H\"older's inequality twice and Proposition \ref{Lemma: Martini_Mullar_Weighted_Plancherel} leads to
\begin{align*}
    \|J_2\|_{L^{2/3}} &\leq C \sum_{l \in \mathbb{Z}} \sum_{M_1=j+ 1 }^{\infty} 2^{jQ(1+\varepsilon)/2} \| \phi_{j,l, M_1}^{\alpha}(\mathcal{L}, T)f_{n_1}^j\|_{L^2} \| \psi_l(\mathcal{L})g_{n_2}^j\|_{L^2} \\
    &\leq C \sum_{l \in \mathbb{Z}} \sum_{M_1=j+ 1 }^{\infty} 2^{jQ(1+\varepsilon)/2} 2^{-M_1 d_2/2} \|\phi_{j,l}^{\alpha}\|_{L^{\infty}} \| f_{n_1}^j\|_{L^1} \| g_{n_2}^j\|_{L^2} .
\end{align*}
Next, employing the fact from \eqref{Convergence of sum of l using phi} and summing for $M_1\geq j+1$, we can conclude that
\begin{align}
\label{Summing of l and M1 for 1,2 case}
    \|J_2\|_{L^{2/3}} &\leq C 2^{j\varepsilon(1+d_2/2)} 2^{-j \alpha} 2^{j d(1+\varepsilon)/2} \| f_{n_1}^j\|_{L^1} \| g_{n_2}^j\|_{L^2} \\ 
    &\nonumber\leq C 2^{-j \delta} \| f_{n_1}^j\|_{L^1} \| g_{n_2}^j\|_{L^2} ,
\end{align}
where $\delta=\alpha-d/2-\varepsilon(1+d_2/2)>0$, since $\alpha>d/2$ and we can choose $\varepsilon>0$ very small.

\section{Proof of the claim (\ref{claim in the estimate of Balpha j,1}) for \texorpdfstring{$(p_1,p_2,p)=(1,\infty,1)$}{} and \texorpdfstring{$(2,2,1)$}{}}\label{section 9}
Let us first proceed with the proof of claim (\ref{claim in the estimate of Balpha j,1}) corresponding to the point $(p_1,p_2,p)=(1,\infty,1)$.
\subsection{For \texorpdfstring{$(p_1,p_2,p)=(1,\infty,1)$}{}}
In the present case, $\alpha(1, \infty)= Q/2$. From \eqref{Equation: Decomposition in terms of Fourier series} and applying H\"older's inequality yields
\begin{align}
\label{Application of Holder in 1, infinity case}
    \|\chi_{S_{n}^j} \mathcal{B}^{\alpha}_{j,1} (f_{n_1}^j,g_{n_2}^j)\|_{L^{1}} & \leq C \sum_{l \in \mathbb{Z}} \|\phi_{j,l}^{\alpha}(\mathcal{L})f_{n_1}^j\|_{L^2} \|\psi_l(\mathcal{L})g_{n_2}^j\|_{L^2} .
\end{align}
Following the earlier approaches, we split the proof into the following two cases.

\subsubsection{Case-I : \texorpdfstring{$|a'_n| > \frac{11}{5} 2^{j(1+\varepsilon)}$}{}}
\label{subsection: Case I for 1, infinity case}
First, note that as shown in subsection \ref{subsection: case I of 1,1 case}, we have $|a_n'| \sim |a_{n_1}'| \sim |a_{n_2}'|$. Therefore, using Proposition \ref{Weighted restriction estimate}, summing over $l \in \mathbb{Z}$ (see \eqref{Final estimate of case I for 1,1 case}) and applying H\"older's inequality, from \eqref{Application of Holder in 1, infinity case} we obtain
\begin{align*}
    \|\chi_{S_{n}^j} \mathcal{B}^{\alpha}_{j,1} (f_{n_1}^j,g_{n_2}^j)\|_{L^1} & \leq C \sum_{l \in \mathbb{Z}} |a'_{n_1}|^{-d_2/2} \|\phi_{j,l}^{\alpha}\|_{L^{\infty}} \|f_{n_1}^j\|_{L^1} \|\psi_l\|_{L^{\infty}} \|g_{n_2}^j\|_{L^2} \\
    &\leq C 2^{j \varepsilon} 2^{-j\alpha} |a'_{n_1}|^{-d_2/2} \|f_{n_1}^j\|_{L^1} \|g_{n_2}^j\|_{L^{\infty}} 2^{j d (1+\varepsilon)/2} |a'_{n_2}|^{d_2/2}  \\
    &\leq C 2^{-j \delta} \|f_{n_1}^j\|_{L^1} \|g_{n_2}^j\|_{L^{\infty}},
\end{align*}
where, since $\alpha> d/2$, we can choose $\varepsilon>0$ so small so that $\delta=\alpha-d/2-\varepsilon(1+d/2) >0$.

\subsubsection{Case-II : \texorpdfstring{$|a'_n| \leq \frac{11}{5} 2^{j(1+\varepsilon)}$}{}} Following the same reasoning as in the previous case, the estimate \eqref{Application of Holder in 1, infinity case}, Proposition \ref{Weighted restriction estimate} for $\gamma=0$, and H\"older's inequality yields
\begin{align*}
   \|\chi_{S_{n}^j} \mathcal{B}^{\alpha}_{j,1} (f_{n_1}^j,g_{n_2}^j)\|_{L^1} &\leq C \sum_{l \in \mathbb{Z}} \|\phi_{j,l}^{\alpha}\|_{L^{\infty}} \|f_{n_1}^j\|_{L^{1}} \|g_{n_2}^j\|_{L^{2}} \\
   &\leq C 2^{-j\alpha} 2^{j \varepsilon} \|f_{n_1}^j\|_{L^{2}} 2^{j (1+\varepsilon)Q/2} \|g_{n_2}\|_{L^{\infty}} \\
   &\leq C 2^{-j \delta} \|f_{n_1}\|_{L^{2}} \|g_{n_2}\|_{L^{\infty}},
\end{align*}
where $\delta=\alpha-Q/2-\varepsilon(1+Q/2)>0$, as $\alpha>Q/2$ and we can choose $\varepsilon>0$ very small.

This completes the proof the claim \ref{claim in the estimate of Balpha j,1} for $(1, \infty,1)$. Next, we move to the proof of the claim for the point $(2,2,1)$.

\subsection{For \texorpdfstring{$(p_1,p_2,p)=(2,2,1)$}{}}
\label{subsection: Proof at 2,2,1 case}
Here, $\alpha(2,2)=0$. Following the same steps as  in the previous estimate together with H\"older's inequality immediately leads to
\begin{align*}
   \|\chi_{S_{n}^j} \mathcal{B}^{\alpha}_{j,1} (f_{n_1}^j,g_{n_2}^j)\|_{L^1} &\leq C \sum_{l \in \mathbb{Z}} \| \phi_{j,l}^{\alpha}(\mathcal{L})f_{n_1}^j\|_{L^2} \| \psi_l(\mathcal{L})g_{n_2}^j\|_{L^2} \\
   &\leq C \sum_{l \in \mathbb{Z}}  \|\phi_{j,l}^{\alpha}\|_{L^{\infty}} \|f_{n_1}^j\|_{L^{2}} \|g_{n_2}^j\|_{L^{2}} \leq C 2^{- j \delta} \|f_{n_1}^j\|_{L^{2}} \|g_{n_2}^j\|_{L^{2}} ,
\end{align*}
where $\delta=\alpha-\varepsilon>0$ for $\varepsilon>0$ very small, since $\alpha>0$.

\section{Proof of Theorem \ref{Theorem: Bilinear Bochner-Riesz with Fourier transform away from origin}}
\label{Section: Proof of theorem for restricted f and g}
This section is devoted to the proof of Theorem \ref{Theorem: Bilinear Bochner-Riesz with Fourier transform away from origin}. The argument closely follows that of Theorem \ref{Bilinear Bochner-Riesz main theorem} (see Section \ref{Section: proof of banach case}) In fact, the first three steps remain identical, and only the final step requires a different approach. As in that case, the key estimate is $\mathcal{B}^{\alpha}_{j,1}$.

Assume that $\supp \mathcal{F}_2 f(x', \cdot) \subseteq \{\lambda_1 : |\lambda_1| \geq \kappa_1 \}$ and $\supp \mathcal{F}_2 g(x', \cdot) \subseteq \{\lambda_2 : |\lambda_2| \geq \kappa_2 \}$ for some $\kappa_1, \kappa_2>0$. We define two smooth functions $\Omega_1$ and $\Omega_2$ on $\mathbb{R}$ such that, $\Omega_1(\tau_1) = 0$, if $\tau_1 \in (-\kappa_1/2, \kappa_1/2)$, and $\Omega_1 \equiv 1$ on the support of $\mathcal{F}_2 f(x', \cdot)$, while $\Omega_2(\tau_2) = 0$, if $\tau_2 \in (-\kappa_2/2, \kappa_2/2)$ and $\Omega_2 \equiv 1$ on the support of $\mathcal{F}_2 g(x', \cdot)$. Consequently, from \eqref{Bilinear Bochner-Riesz operator after dyadic decomposition} and the support conditions, we can express $\mathcal{B}^{\alpha}_{j,1}$ as follows
\begin{align*}
    \mathcal{B}^{\alpha}_{j,1}(f,g)(x) &= \frac{1}{(2\pi)^{2 d_2}} \int_{\mathbb{R}^{d_2}} \int_{\mathbb{R}^{d_2}} e^{i(\lambda_1 + \lambda_2)\cdot x''} \sum_{k_1, k_2 = 0}^{\infty} \varphi^{\alpha}_j ([k_1]|\lambda_1|, [k_2]|\lambda_2|) \,\Omega_1(|\lambda_1|) \Omega_2(|\lambda_2|) \\
    & \hspace{7cm} P^{\lambda_1} _{k_1} f^{\lambda_1}(x') \,P^{\lambda_2} _{k_2} g^{\lambda_2}(x'')\, d\lambda_1 d\lambda_2 \\
    &=: \mathcal{B}^{\alpha, \kappa_1, \kappa_2}_{j,1}(f,g)(x) .
\end{align*}

Therefore, as in subsection \ref{subsection: Estimate of Bj1 case}, we are once again reduced to proving the following: for $ \varrho(a_n , a_{n_1})\leq 2 \cdot 2^{j(1+\varepsilon)}$ and $\varrho(a_n , a_{n_2})\leq 2 \cdot 2^{j(1+\varepsilon)}$, whenever $\alpha>\alpha(p_1, p_2)$, there exists a $\delta>0$ such that
\begin{align}
\label{claim in the estimate of Balpha j,1 for restricted case}
    \| \chi_{S_{n}^j} \mathcal{B}^{\alpha, \kappa_1, \kappa_2}_{j,1} (f_{n_1}^j,g_{n_2}^j) \|_{L^{p}} \leq C 2^{-j \delta} \|f_{n_1}^j\|_{L^{p_1}} \|g_{n_2}^j\|_{L^{p_2}} ,
\end{align}
where $(p_1,p_2,p)=$ $(1,1,1/2)$, $(1,2,2/3)$, $(2,2,1)$ and $(1, \infty,1)$.

When we need only one function to be supported away from origin, we introduce $\Omega_1$ or $\Omega_2$ in the above expression of $\mathcal{B}^{\alpha}_{j,1}$, depending on which functions $\mathcal{F}_2 f(x', \cdot)$ and $\mathcal{F}_2 g(x', \cdot)$ has support away from the origin. Accordingly, in such cases, we also denote the corresponding operators by either $\mathcal{B}^{\alpha, \kappa_1}_{j,1}$ or $\mathcal{B}^{\alpha, \kappa_2}_{j,1}$. In this section we only show how the prove \ref{claim in the estimate of Balpha j,1 for restricted case} for the points $(1, \infty,1)$. Since the proof for the points $(1,1,1/2)$ and $(1,2,2/3)$ are similar to that of $(1, \infty, 1)$, we omit it. And proof at $(2,2,1)$ follows from subsection \ref{subsection: Proof at 2,2,1 case}.

\subsection{Proof of (\ref{claim in the estimate of Balpha j,1 for restricted case}) at \texorpdfstring{$(p_1, p_2,p)=(1, \infty,1)$}{}}
In this case, we only need the support of $\mathcal{F}_2 g(x', \cdot)$ to lie outside the origin. Also, recall $\alpha(1, \infty)=d/2$ in the present case. First, observe that we can assume $|a'_n| \leq \frac{11}{5} 2^{j(1+\varepsilon)}$, since the other case has  already been addressed in subsection \eqref{subsection: Case I for 1, infinity case} for $\alpha>d/2$.

Similar to (\ref{Equation: Decomposition in terms of Fourier series}), we decompose the multiplier into Fourier series, and consequently write
\begin{align*}
    \mathcal{B}_{j,1}^{\alpha, \kappa_2}(f_{n_1}^j,g_{n_2}^j)(x) = C \sum_{l \in \mathbb{Z}} \left\{ \phi_{j,l}^{\alpha}(\mathcal{L})f_{n_1}^j(x) \right\} \left\{ \psi_l^{\kappa_2}(\mathcal{L}, T)g_{n_2}^j(x) \right\} ,
\end{align*}
where $\psi_l^{\kappa_2} : \mathbb{R} \times \mathbb{R} \to \mathbb{C}$ is defined by $\psi_l^{\kappa_2}(\eta_2, \tau_2)= \psi_l(\eta_2) \Omega_2(\tau_2)$.

Analogous to \eqref{Cutoff of M1 for the first linear multiplier}, we introduce cut-off in $|\lambda_1|$-variable associated to the function $\phi_{j,l}^{\alpha}$ and similar to \eqref{Decomposition of phi into M1 cutoff}, we decompose as follows
\begin{align*}
    \chi_{S_{n}^j}(x) \mathcal{B}_{j,1}^{\alpha, \kappa_2}(f_{n_1}^j,g_{n_2}^j)(x) &= C \chi_{S_{n}^j}(x) \sum_{l \in \mathbb{Z}} \left(\sum_{M_1=0 }^{j} + \sum_{M_1=j+1 }^{\infty} \right) \phi_{j,l,M_1}^{\alpha}(\mathcal{L}, T) f_{n_1}^j(x) \psi_{l}^{\kappa_2}(\mathcal{L}, T) g_{n_2}^j(x) \\
    &:= S_1 + S_2 .
\end{align*}

\subsubsection{\textbf{Estimate of} \texorpdfstring{$S_{2}$}{}{}}
This estimate can be obtained in a similar fashion to that of $J_2$ (see subsection \ref{Estimate of J2 for 1,2 case}). Applying H\"older's inequality and Proposition \ref{Lemma: Martini_Mullar_Weighted_Plancherel} yields
\begin{align*}
    \|S_2\|_{L^{1}} &\leq C \sum_{l \in \mathbb{Z}} \sum_{M_1=j+ 1 }^{\infty} \| \phi_{j,l, M_1}^{\alpha}(\mathcal{L}, T)f_{n_1}^j\|_{L^2} \| \psi_l^{\kappa_2}(\mathcal{L}, T)g_{n_2}^j\|_{L^2} \\
    &\leq C \sum_{l \in \mathbb{Z}} \sum_{M_1=j+ 1 }^{\infty} 2^{-M_1 d_2/2} \|\phi_{j,l}^{\alpha}\|_{L^{\infty}} \| f_{n_1}^j\|_{L^1} \| g_{n_2}^j\|_{L^2} .
\end{align*}
Now, following the same argument as in \eqref{Summing of l and M1 for 1,2 case} and again using H\"older's inequality, we get
\begin{align*}
    \|S_2\|_{L^{1}} &\leq C 2^{-j \alpha} 2^{j \varepsilon} 2^{-j d_2/2} \| f_{n_1}^j\|_{L^1} \|g_{n_2}^j\|_{L^{\infty}} 2^{j Q(1+\varepsilon)/2} \\ 
    &\leq C 2^{-j \delta} \| f_{n_1}^j \|_{L^1} \|g_{n_2}^j\|_{L^{\infty}} ,
\end{align*}
where $\delta=\alpha-d/2-\varepsilon(1+Q/2)>0$, by choosing $\varepsilon>0$ very small, since $\alpha>d/2$.

\subsubsection{\textbf{Estimate of} \texorpdfstring{$S_{1}$}{}{}}
To proceed further, we again introduce a cut-off in $|\lambda_2|$-variable associated to the function $\psi_{l}^{\kappa_2}$, in a manner similar to the decomposition carried out in  \eqref{Cutoff of M1 for the first linear multiplier}. Specifically, we decompose it as follows
\begin{align*}
    S_1 &= C \chi_{S_{n}^j}(x) \sum_{l \in \mathbb{Z}} \sum_{M_1=0 }^{j} \sum_{M_2=0 }^{\infty} \phi_{j,l,M_1}^{\alpha}(\mathcal{L}, T) f_{n_1}^j(x) \psi_{l, M_2}^{\kappa_2}(\mathcal{L}, T) g_{n_2}^j(x) ,
\end{align*}
where $\psi_{l, M_2}^{\kappa_2}(\eta_2, \tau_2)= \psi_{l}(\eta_2) \Omega_2(\tau_2) \Theta(2^{M_2}\tau_2)$.

First, note that due to the support considerations, $\Omega_2(\tau_2) \Theta(2^{M_2}\tau_2)$ is non vanishing only if there exists some $L_0>0$, depending on $\kappa_2$ such that $M_2 \leq L_0$. Now, similar to the estimate of $I_1$ (see subsection \ref{Estimate of I1 for 1,1 case}), for $0\leq M_1 \leq j$, we decompose both the support of $f_{n_1}^j$ and $g_{n_2}^j$ as well as the functions themselves, with respect to the second layer, see \eqref{Expression: Decomposition of ball into smaller balls}-\eqref{Decomposition of f and g}. With this decomposition in hand, we can split $S_1$ as in \eqref{Decomposition of S1 into S11 and S12 in (1,1,1/2)} as follows.
\begin{align}
\label{Decomposition of S1 in 1, infinity case}
    S_{1} &= \sum_{M_1=0 }^{j} \sum_{M_2=0 }^{L_0} \sum_{m_1=1}^{N_{M_1}} \sum_{m_2=1}^{N_{M_1}} \chi_{S_{n}^j}(x) \chi_{\widetilde{S}_{n,n_1,m_1}^{M_1,j}}(x) \chi_{\widetilde{S}_{n_2,m_2}^{M_1,j}}(x) \mathcal{B}^{\alpha, \kappa_2}_{j,1,M_1, M_2}(f_{n_1,m_1}^{M_1,j}, g_{n_2,m_2}^{M_1,j})(x) \\
    &\nonumber + \sum_{M_1=0 }^{j} \sum_{M_2=0 }^{L_0} \sum_{m_1=1}^{N_{M_1}} \chi_{S_{n}^j}(x) (1-\chi_{\widetilde{S}_{n,n_1,m_1}^{M_1,j}})(x) \mathcal{B}^{\alpha, \kappa_2}_{j,1,M_1, M_2}(f_{n_1,m_1}^{M_1,j}, g_{n_2}^{j})(x) \\
    &\nonumber + \sum_{M_1=0 }^{j} \sum_{M_2=0 }^{L_0} \sum_{m_1=1}^{N_{M_1}} \sum_{m_2=1}^{N_{M_1}} \chi_{S_{n}^j}(x) \chi_{\widetilde{S}_{n_1,m_1}^{M_1,j}}(x) (1-\chi_{\widetilde{S}_{n,n_2,m_2}^{M_1,j}})(x) \mathcal{B}^{\alpha, \kappa_2}_{j,1,M_1, M_2}(f_{n_1,m_1}^{M_1,j}, g_{n_2,m_2}^{M_1,j})(x) \\
    &\nonumber =: S_{11} + S_{12} + S_{13} ,
\end{align}
where
\begin{align*}
    \mathcal{B}_{j,1, M_1, M_2}^{\alpha, \kappa_2}(f,g)(x) = C \sum_{l \in \mathbb{Z}} \left\{ \phi_{j,l, M_1}^{\alpha}(\mathcal{L}, T)f(x) \right\} \left\{ \psi_{l,M_2}^{\kappa_2}(\mathcal{L}, T)g(x) \right\} .
\end{align*}

\subsubsection{\textbf{Estimate of} \texorpdfstring{$S_{11}$}{}{}}
\label{Estimate of S11 for 1, infinity restrictive case}
Recall that $S_n^j \subseteq B(a_n, 2^{j(1+\varepsilon)}/5)$ and $|a_n'| \leq 2^{j(1+\varepsilon)} 11/5$. Therefore, we proceed as in (\ref{Expression: Decomposition of ball into smaller balls}) by decomposing $S_n^j$ into disjoint sets, and accordingly, we write
\begin{align}
\label{Use of bounded overlapping in estimate of S11 case}
    S_{11} &= \sum_{M_1=0 }^{j} \sum_{M_2=0 }^{L_0} \sum_{m=1}^{N_{M_1}} \sum_{m_1=1}^{N_{M_1}} \sum_{m_2=1}^{N_{M_1}} \chi_{S_{n,m}^{M_1,j}}(x) \chi_{\widetilde{S}_{n,n_1,m_1}^{M_1,j}}(x) \chi_{\widetilde{S}_{n,n_2,m_2}^{M_1,j}}(x) \mathcal{B}^{\alpha, \kappa_2}_{j,1,M_1, M_2}(f_{n_1,m_1}^{M_1,j}, g_{n_2,m_2}^{M_1,j})(x) \\
    &\nonumber = \sum_{M_1=0 }^{j} \sum_{M_2=0 }^{L_0} \sum_{m=1}^{N_{M_1}} \sum_{m_1: S_{n,m}^{M_1,j} \cap \widetilde{S}_{n,n_1,m_1}^{M_1,j} \neq \emptyset} \,\,\sum_{m_2: S_{n,m}^{M_1,j} \cap \widetilde{S}_{n,n_2,m_2}^{M_1,j} \neq \emptyset} \\
    &\nonumber \hspace{4cm} \chi_{S_{n,m}^{M_1,j}}(x) \chi_{\widetilde{S}_{n_1,m_1}^{M_1,j}}(x) \chi_{\widetilde{S}_{n_2,m_2}^{M_1,j}}(x) \mathcal{B}^{\alpha, \kappa_2}_{j,1,M_1, M_2}(f_{n_1,m_1}^{M_1,j}, g_{n_2,m_2}^{M_1,j})(x) .
\end{align}
Now in order to estimate the $L^1$-norm of $S_{11}$, we need to first evaluate the following quantity. Using H\"older's inequality and Proposition \ref{Lemma: Martini_Mullar_Weighted_Plancherel}, we obtain
\begin{align*}
    \|\mathcal{B}^{\alpha, \kappa_2}_{j,1,M_1, M_2}(f_{n_1,m_1}^{M_1,j}, g_{n_2,m_2}^{M_1,j})\|_{L^1} &\leq C \sum_{l \in \mathbb{Z}} \|\phi_{j,l,M_1}^{\alpha}(\mathcal{L}, T) f_{n_1,m_1}^{M_1,j} \|_{L^2} \|\psi_{l,M_2}^{\kappa_2}(\mathcal{L}, T) g_{n_2,m_2}^{M_1,j} \|_{L^2} \\
    &\leq C \sum_{l \in \mathbb{Z}} 2^{-M_1 d_2/2} \|\phi_{j,l}^{\alpha}\|_{L^{\infty}} \|f_{n_1,m_1}^{M_1,j} \|_{L^1} \|g_{n_2,m_2}^{M_1,j}\|_{L^2} .
\end{align*}
Summing over $l\in \mathbb{Z}$ using \eqref{Convergence of sum of l using phi}, and once again applying H\"older's inequality to the previous estimate, we derive
\begin{align*}
    \|\mathcal{B}^{\alpha, \kappa_2}_{j,1,M_1, M_2}(f_{n_1,m_1}^{M_1,j}, g_{n_2,m_2}^{M_1,j})\|_{L^1} & \leq C 2^{j \varepsilon} 2^{-j \alpha} 2^{- M_1 \frac{d_2}{2}}  2^{j(1+\epsilon) \frac{d_1}{2}} 2^{(j+M_1)(1+\varepsilon) \frac{d_2}{2}} \|f_{n_1,m_1}^{M_1,j} \|_{L^1} \|g_{n_2,m_2}^{M_1,j}\|_{L^{\infty}} .
\end{align*}
With the above estimate in hand, we arrive at
\begin{align}
\label{Putting the l1 estimate in S11}
    & \|S_{11}\|_{L^1} \leq C 2^{j \varepsilon(1+d_2/2)} 2^{-j \alpha} 2^{j(1+\varepsilon)d/2} \sum_{M_1=0 }^{j} \sum_{M_2=0 }^{L_0} \\
    &\nonumber \hspace{3cm} \sum_{m=1}^{N_{M_1}} \Big\{\sum_{m_1: S_{n,m}^{M_1,j} \cap \widetilde{S}_{n_1,m_1}^{M_1,j} \neq \emptyset} \|f_{n_1,m_1}^{M_1,j} \|_{L^1} \Big\} \Big\{ \sum_{m_2: S_{n,m}^{M_1,j} \cap \widetilde{S}_{n_2,m_2}^{M_1,j} \neq \emptyset} \|g_{n_2,m_2}^{M_1,j}\|_{L^{\infty}} \Big\} .
\end{align}

Before proceed further, recall that for each $i=1,2$, the sets $S_{n_i,m_i}^{M_1,j} \subseteq B^{|\cdot|}\left(a_{n_i}', 2^{j(1+\varepsilon)}/5\right) \times B^{|\cdot|}\left(b_{n_i,m_i}^{M_i}, C 2^{(j+M_1)(1+\varepsilon)}/25\right)$ were constructed to be disjoint, with the $|b_{n_i,m_i}^{M_i}-b_{n_i,m_i'}^{M_i}|> C 2^{(j+M_1)(1+\varepsilon)}/50$ for $m_i \neq m_i'$. Therefore the sets $\widetilde{S}_{n,n_i,m_i}^{M_1,j}$ (see \eqref{Defination of dilated balls}) satisfy the following bounded overlapping property
\begin{align}
\label{Bounded overlapping property for M1 balls}
    \sup_{m} \# \left\{ m_i : S_{n,m}^{M_1,j} \cap \widetilde{S}_{n,n_i,m_i}^{M_1,j} \neq \emptyset \right\} &\leq \sup_{m} \# \left\{ m_i : |b_{n,m}^{M_1} - b_{n_i,m_i}^{M_1}| \leq C 2^{(j+M_1)(1+\varepsilon)} 2^{\gamma j+1} \right\} \\
    &\nonumber \leq C 2^{C \gamma j} ,
\end{align}
and similarly,
\begin{align}\label{Bounded overlapping property for M1 balls 2}
    \sup_{m_i} \# \left\{ m : S_{n,m}^{M_1,j} \cap \widetilde{S}_{n,n_i,m_i}^{M_1,j} \neq \emptyset \right\} \leq C 2^{C \gamma j}.
\end{align}

Finally, using the above bounded overlap estimates \eqref{Bounded overlapping property for M1 balls} and \ref{Bounded overlapping property for M1 balls 2}, from the estimate obtained in \eqref{Putting the l1 estimate in S11}, we can conclude that
\begin{align}
\label{Final estimate of S11 in 1,infinity restrictive case}
    \|S_{11}\|_{L^1} & \leq C 2^{j \varepsilon(1+d_2/2)} 2^{-j \alpha} 2^{j(1+\varepsilon)d/2} \sum_{M_1=0 }^{j} \sum_{M_2=0 }^{L_0} \Big\{ \sum_{m=1}^{N_{M_1}} \sum_{m_1: S_{n,m}^{M_1,j} \cap \widetilde{S}_{n,n_1,m_1}^{M_1,j} \neq \emptyset} \|f_{n_1,m_1}^{M_1,j} \|_{L^1} \Big\} \\
    &\nonumber \hspace{5cm} \times \Big\{ \sup_m \sum_{m_2: S_{n,m}^{M_1,j} \cap \widetilde{S}_{n,n_2,m_2}^{M_1,j} \neq \emptyset} \|g_{n_2,m_2}^{M_1,j}\|_{L^{\infty}} \Big\} \\
    &\nonumber \leq C 2^{C \epsilon_1} 2^{C \gamma j} 2^{-j \alpha} 2^{j d/2} \|f_{n_1}^j\|_{L^1} \|g_{n_2}^j\|_{L^{\infty}} \\
    &\nonumber \leq C 2^{-j \delta} \|f_{n_1}^j\|_{L^1} \|g_{n_2}^j\|_{L^{\infty}} ,
\end{align}
where $\epsilon_1>0$ which depends on $\varepsilon>0$ and $\delta=\alpha-d/2-C \epsilon_1-C \gamma>0$, since $\alpha>d/2$, we can choose $\gamma, \epsilon_1>0$ very small such that $\alpha-d/2-C \epsilon_1-C \gamma>0$.

\subsubsection{\textbf{Estimate of} \texorpdfstring{$S_{12}$}{}{}}
The process of estimating  $S_{12}$ is analogous to that of $J_{12}$ (see subsection \ref{Estimate of J12 for 1,2 case}). Applying H\"older's inequality and utilizing the 
bound from \eqref{L1 to L2 norm for phijl on outside} yields
\begin{align*}
    \|S_{12}\|_{L^{1}} & \leq C \sum_{M_1=0 }^{j} \sum_{M_2=0 }^{L_0} \sum_{m_1=1}^{N_{M_1}} \sum_{l \in \mathbb{Z}} \|\chi_{S_{n}^j} (1-\chi_{\widetilde{S}_{n,n_1,m_1}^{M_1,j}}) \phi_{j,l,M_1}^{\alpha}(\mathcal{L}, T) f_{n_1,m_1}^{M_1,j}\|_{L^2} \|\psi_{l,M_2}^{\kappa_2}(\mathcal{L}, T) g_{n_2}^{j}\|_{L^2} \\
    &\leq C 2^{-\gamma j N} \sum_{M_1=0 }^{j} \sum_{M_2=0 }^{L_0} \Big\{\sum_{l \in \mathbb{Z}} \|\phi_{j,l}^{\alpha}\|_{L^{\infty}} \Big\} \Big\{\sum_{m_1=1}^{N_{M_1}} \|f_{n_1,m_1}^{M_1,j}\|_{L^1} \Big\} \|g_{n_2}^{j}\|_{L^2} .
\end{align*}
Now, again applying H\"older's inequality, summing over $l\in \mathbb{Z}$ using \eqref{Convergence of sum of l using phi}, and choosing $N>0$ large enough, we get the desire bound, that is
\begin{align*}
    \|S_{12}\|_{L^{1}} & \leq C 2^{-\gamma j N} 2^{2 j \varepsilon} 2^{-j \alpha} \|f_{n_1}^j\|_{L^1} 2^{j(1+\varepsilon)Q/2} \|g_{n_2}^j\|_{L^{\infty}} \\
    &\leq C 2^{-j \delta} \|f_{n_1}^j\|_{L^1} \|g_{n_2}^j\|_{L^{\infty}} ,
\end{align*}
for some $\delta>0$.

\subsubsection{\textbf{Estimate of} \texorpdfstring{$S_{13}$}{}{}}
This estimate is nearly similar to the estimate for $I_{12}$ (see \eqref{subsection: estimate of I12 for 1,1 case}), but there is little difference. Instead of the cut-off $(1-\chi_{\widetilde{S}_{n, n_2,m_2}^{M_2,j}})$, here we have $(1-\chi_{\widetilde{S}_{n, n_2,m_2}^{M_1,j}})$. This is the reason why we need to assume the support of the Fourier transform of $g$ with respect to the second variable, to lie outside the origin.

Applying H\"older's inequality  shown in \eqref{Estimate: Application of Holder inequality for error part in 1,1}, we get
\begin{align}
\label{Estimate of L1 norm for 1, infinity case restricted}
    \|S_{13}\|_{L^1} &\leq C \sum_{M_1=0 }^{j} \sum_{M_2=0 }^{L_0} \sum_{m_1=1}^{N_{M_1}} \sum_{m_2=1}^{N_{M_1}} \sum_{l \in \mathbb{Z}} \|\phi_{j,l,M_1}^{\alpha}(\mathcal{L}, T) f_{n_1,m_1}^{M_1,j} \|_{L^2} \\
    &\nonumber \hspace{5cm} \|\chi_{S_{n}^j} (1-\chi_{\widetilde{S}_{n,n_2,m_2}^{M_1,j}}) \psi_{l,M_2}^{\kappa_2}(\mathcal{L}, T) g_{n_2,m_2}^{M_1,j} \|_{L^2} .
\end{align}

First note that, from \eqref{Defination of dilated balls}, if $x \in \supp{\chi_{S_{n}^j} (1-\chi_{\widetilde{S}_{n, n_2,m_2}^{M_1,j}})}$ and $z \in \supp{g_{n_2,m_2}^{M_1,j}}$, then $|x''-b_{n_2,m_2}^{M_1}| \geq C 2^{\gamma j+1} 2^{(j+M_1)(1+\varepsilon)}$ and $|z''-b_{n_2,m_2}^{M_1}| \leq C 2^{(j+M_1)(1+\varepsilon)}$. So that these implies $|x''-z''| \geq C 2^{\gamma j} 2^{(j+M_1)(1+\varepsilon)}$.

Similar to the estimate \eqref{Estimate: Application of Minkowski inequality for error part in 1,1} and (\ref{Estimate: Weighted plancherel on outside ball in (1,1,1/2)}), the estimation of the $L^2$-norm for $\chi_{S_{n}^j} (1-\chi_{\widetilde{S}_{n,n_2,m_2}^{M_1,j}}) \psi_{l,M_2}^{\kappa_2}(\mathcal{L}, T) g_{n_2,m_2}^{M_1,j}$ leads to the following calculation 
\begin{align*}
    & \left( \int_{\mathbb{R}^d} |\chi_{S_{n}^j}(x) (1-\chi_{\widetilde{S}_{n, n_2,m_2}^{M_1,j}})(x) \mathcal{K}_{\psi_{l,M_2}^{\kappa_2}(\mathcal{L}, T)}(x,z)|^2 \ dx \right)^{1/2} \\
    &\leq C (2^{\gamma j} 2^{(j+M_1)(1+\varepsilon)} )^{-N} \left( \int_{\mathbb{R}^d} | |x''-z''|^N \mathcal{K}_{\psi_{l,M_2}(\mathcal{L}, T)}(x,z)|^2 \ dx \right)^{1/2} \\
    &\leq C (2^{\gamma j} 2^{(j+M_1)(1+\varepsilon)} )^{-N} 2^{M_2(N-d_2/2)} \|\psi_{l}^{\kappa_2}\|_{L^2_N} \\
    &\leq C 2^{-\gamma j N} 2^{-j N} |l|^N ,
\end{align*}
where we have used the fact $M_2 \leq L_0$.

Thus, as estimated in \eqref{Estimate: Outside of M1 ball for f in (1,1,1/2)} and \eqref{Estimate: Estimate of error part wrt f for (1,1,1/2)}, we deduce from \eqref{Estimate of L1 norm for 1, infinity case restricted} that
\begin{align*}
    &\|S_{13}\|_{L^1} \\
    &\leq C 2^{-\gamma j N} \sum_{M_1=0 }^{j} \sum_{M_2=0 }^{L_0} 2^{-M_1 d_2/2} \Big\{\sum_{l \in \mathbb{Z}} 2^{-j N} |l|^N \|\phi_{j,l} ^{\alpha}\|_{L^{\infty}} \Big\} \Big\{ \sum_{m_1=1}^{N_{M_1}}\|f_{n_1,m_1}^{M_1,j}\|_{L^1} \Big\} \Big\{ \sum_{m_2=1}^{N_{M_1}} \|g_{n_2,m_2}^{M_1,j}\|_{L^1} \Big\} \\
    &\leq C 2^{-\gamma j N} 2^{j \varepsilon} 2^{-j \alpha} \|f_{n_1}^j\|_{L^1} 2^{j(1+\varepsilon)Q} \|g_{n_2}^j\|_{L^{\infty}} \\
    &\leq C 2^{-j \delta} \|f_{n_1}^j\|_{L^1} \|g_{n_2}^j\|_{L^{\infty}} ,
\end{align*}
provided we choose $N>0$ sufficiently large and $\varepsilon>0$ very small.

This completes the proof of \eqref{claim in the estimate of Balpha j,1 for restricted case} for the point $(p_1, p_2,p)=(1, \infty,1)$.

\section{Mixed norm estimates}\label{section 11}
The proof of Theorem \ref{Theorem: Mixed norm estimate for first layer} is presented here. Just like in subsection \ref{subsection: step 1 decomposition of Bj1 case}, our goal is to prove, there exists a $\delta>0$ such that
\begin{align*}
    \|\mathcal{B}^{\alpha}_{j}(f,g)\|_{L_{x''}^{2/3} L_{x'}^{1}} &\leq C 2^{-j \delta} \|f\|_{L_{x''}^1 L_{x'}^1} \|g\|_{L_{x''}^2 L_{x'}^{\infty}} .
\end{align*}
We decompose the kernel $\mathcal{K}_j^{\alpha}$, which corresponds to the operator $\mathcal{B}^{\alpha}_{j}$ as
\begin{align*}
     \mathcal{K}_j^{\alpha} &= \sum_{\theta_1, \theta_2 \in \{1,2,3,4\}} \mathcal{K}_{j,A_{\theta_1}, A_{\theta_2}}^{\alpha} , 
\end{align*}
where $\mathcal{K}_{j,A_{\theta_1}(x), A_{\theta_2}(x)}^{\alpha}$ is given by
\begin{align}
\label{decomposition of kernel in mixed case}
      \mathcal{K}_{j,A_{\theta_1}, A_{\theta_2}}^{\alpha}(x,y,z) &= \mathcal{K}_{j}^{\alpha}(x,y,z) \,  \chi_{A_{\theta_1}} (y) \,  \chi_{A_{\theta_2}} (z) ,
\end{align}
and for $\varepsilon>0$, and $\theta_1, \theta_2 \in \{1,2,3,4\}$, we define
\begin{align*}
    A_{1} &:= B^{|\cdot|} (x', 2^{j(1+\varepsilon)}) \times B^{|\cdot|} (x'', 2^{j(1+\varepsilon)} \max\{4 \cdot 2^{j(1+\varepsilon)}, |x'|\}) ;\\
    A_{2} &:= B^{|\cdot|} (x', 2^{j(1+\varepsilon)}) \times B^{|\cdot|} (x'', 2^{j(1+\varepsilon)} \max\{4 \cdot 2^{j(1+\varepsilon)}, |x'|\})^c ; \\
    A_{3} &:= B^{|\cdot|} (x', 2^{j(1+\varepsilon)})^{c} \times B^{|\cdot|} (x'', 2^{j(1+\varepsilon)} \max\{4 \cdot 2^{j(1+\varepsilon)}, |x'|\}) ; \\
    A_{4} &:= B^{|\cdot|} (x', 2^{j(1+\varepsilon)})^{c} \times B^{|\cdot|} (x'', 2^{j(1+\varepsilon)} \max\{4 \cdot 2^{j(1+\varepsilon)}, |x'|\})^{c} .
\end{align*} 
Associated to the kernel $\mathcal{K}_{j,A_{\theta_1}, A_{\theta_2}}^{\alpha}$, denote the corresponding operator by $\mathcal{B}^{\alpha}_{j,A_{\theta_1}, A_{\theta_2}}$.

Estimate of $\mathcal{K}_{j,A_{\theta_1}, A_{\theta_2}}^{\alpha}$, except for $\mathcal{K}_{j,A_{1}, A_{1}}^{\alpha}$, can be treated similarly as in subsection \ref{Step 3, estimate of Bj234 case}, with the help of Lemma \ref{Lemma: Pointwise kernel estimate for Bj}. We illustrate how to estimate $\mathcal{K}_{j,A_{2}, A_{4}}^{\alpha}$, as the remaining cases can be handled analogously, with the exception of $\mathcal{K}_{j,A_{1}, A_{1}}^{\alpha}$.

Let us adopt the following short hand notation $[2^{j(1+\varepsilon)},|x'|]:= 2^{j(1+\varepsilon)} \max\{4 \cdot 2^{j(1+\varepsilon)}, |x'|\}$. Proceeding as in subsection \ref{subsection: case II P1 1 and P2 bigger than 1 case} and an application of Lemma \ref{Lemma: Pointwise kernel estimate for Bj} yields
\begin{align}
\label{Application of pointwise kernel lemma in mixed norm}
    & |\mathcal{B}^{\alpha}_{j,A_2,A_4}(f,g)(x)| \\
    &\nonumber \leq C 2^{j(2N+1/2+\epsilon_1)} \Big\{ \int_{\mathbb{R}^{d}}  \frac{|f(y)| \chi_{A_2}(y)\, dy} {| B ( y, 1)| \big(1 +  \varrho(x, y) \big)^N } \Big\} \Big\{ \frac{1}{|B(x,1)|}\int_{\mathbb{R}^d} \frac{|g(z)| \chi_{A_4}(z)\, dz}{(1+\varrho(x,z))^N} \Big\} ,
\end{align}
for any $\epsilon_1>0$.

Thus, using Lemma \ref{Lemma: Integral of distance with second Hardy-Littlewood} and applying the fact in \eqref{Estimate : Ball volume}, from \eqref{Application of pointwise kernel lemma in mixed norm} we obtain
\begin{align*}
    & |\mathcal{B}^{\alpha}_{j,A_2,A_4}(f,g)(x)| \\
    & \leq C 2^{j(2N+1/2+\epsilon_1)} \Big\{ \int_{\mathbb{R}^{d}}  \frac{|f(y)| \chi_{A_2}(y) \, dy} {| B ( y, 1)| \big(1 + \varrho(x, y) \big)^N } \Big\} \Big\{ \int_{|x'-z'|>2^{j(1+\varepsilon)}} \frac{\mathcal{M}^{|\cdot|_2}(g(z', \cdot))(x'') \, dz'}{(1 +  |x'-z'| )^{N-2d_2-\epsilon_1}} \Big\} .
\end{align*}
Consequently, applying H\"older's inequality, we arrive at
\begin{align}
\label{Application of Holder in mixed norm for outside}
    & \|\mathcal{B}^{\alpha}_{j,A_2,A_4}(f,g)\|_{L_{x''}^{2/3} L_{x'}^{1}} \leq C 2^{j(2N+1/2+\epsilon_1)} \\
    &\nonumber \hspace{0.5cm} \times \Big\{ \int_{\mathbb{R}^{d} } \frac{|f(y)|}{| B (y, 1)|} \Big[\int_{|x'-y'|\leq 2^{j(1+\varepsilon)}} \Big( \int_{|x''-y''|>[2^{j(1+\varepsilon)},|x'|]} \frac{ dx''}{(1+\varrho(x,y))^N} \Big)\, dx' \Big] \, dy \Big\} \\
    &\nonumber \hspace{3cm} \times \Big\{ \sup_{x'} \Big(\int_{\mathbb{R}^{d_2}} \Big|\int_{|x'-z'|>2^{j(1+\varepsilon)}} \frac{\mathcal{M}^{|\cdot|_2}(g(z', \cdot))(x'')}{(1 +  |x'-z'| )^{N-2d_2-\epsilon_1}} \, dz' \Big|^2 dx'' \Big)^{1/2} \Big\} .
\end{align}

Let us calculate the second factor in the right hand side of the above expression. Recall that $\mathcal{M}^{|\cdot|_1}$ to be the Hardy-Littlewood maximal function on $\mathbb{R}^{d_1}$. Application of Minkowski's integral inequality and boundedness of maximal function $\mathcal{M}^{|\cdot|_1}$ yields
\begin{align}
\label{Mixed of g calculation}
    & \sup_{x'} \Big(\int_{\mathbb{R}^{d_2}} \Big|\int_{|x'-z'|>2^{j(1+\varepsilon)}} \frac{\mathcal{M}^{|\cdot|_2}(g(z', \cdot))(x'')}{(1 + |x'-z'| )^{N-2d_2-\epsilon_1}} \, dz' \Big|^2 dx'' \Big)^{1/2} \\
    &\nonumber \leq \sup_{x'} \Big(\int_{|x'-y'|>2^{j(1+\varepsilon)}} \frac{1}{(1 +  |x'-z'| )^{N-2d_2-\epsilon_1}} \Big(\int_{\mathbb{R}^{d_2}} |\mathcal{M}^{|\cdot|_2}(g(z', \cdot))(x'')|^2 \, dx'' \Big)^{1/2} dz' \Big) \\
    &\nonumber \leq \sup_{x'} \Big(\int_{|x'-y'|>2^{j(1+\varepsilon)}} \frac{\|g(z', \cdot)\|_{L_{x''}^2}}{(1 +  |x'-z'| )^{N-2d_2-\epsilon_1}} dz' \Big) \\
    &\nonumber \leq C 2^{-j(1+\varepsilon)(N-Q-\epsilon)} \sup_{x'}|\mathcal{M}^{|\cdot|_1}(\|g\|_{L_{x''}^2})(x')| \\
    &\nonumber \leq C 2^{-j(1+\varepsilon)(N-Q-\epsilon_1)} \|g\|_{L_{x''}^2 L_{x'}^{\infty}} ,
\end{align}
provided $N>Q+\epsilon_1$.

From Lemma \ref{Lemma: Integral of weight for general h} taking $h\equiv 1$, for $N>d_2$ we have the following estimate,
\begin{align}
\label{Computation of integral for h equals to 1 case}
    \int_{|x'-y'|\leq 2^{j(1+\varepsilon)}} \left( \int_{|x''-y''|>[2^{j(1+\varepsilon)},|x'|]} \frac{dx''}{(1+\varrho(x,y))^N} \right)\, dx' &\leq C 2^{-j(1+\varepsilon)(N-Q)} |B(y,1)| .
\end{align}

Hence, combining the estimates \eqref{Mixed of g calculation} and \eqref{Computation of integral for h equals to 1 case} and inserting into \eqref{Application of Holder in mixed norm for outside}, we conclude that
\begin{align*}
    \|\mathcal{B}^{\alpha}_{j,A_2,A_4}(f,g)\|_{L_{x''}^{2/3} L_{x'}^{1}} &\leq C 2^{j(2N+1/2+\epsilon_1)} 2^{-j(1+\varepsilon)(N-Q)} 2^{-j(1+\varepsilon)(N-Q-\epsilon_1)} \|f\|_{L^1} \|g\|_{L_{x''}^2 L_{x'}^{\infty}} \\
    &\leq C 2^{-j \delta} \|f\|_{L^1} \|g\|_{L_{x''}^2 L_{x'}^{\infty}} ,
\end{align*}
for some $\delta>0$, provided we choose $N>0$ sufficiently large and $\epsilon_1>0$ very small.

Therefore it only remains to estimate $\mathcal{B}^{\alpha}_{j,A_{1}, A_{1}}$, which follows a similar approach to the estimate of $\mathcal{B}^{\alpha}_{j,1}$ (see \ref{subsection: Estimate of Bj1 case}). We choose sequences $\{a_{n'}'\}_{n' \in \mathbb{N}}$ such that for $n'\neq m'$,
\begin{align*}
    |a_{n'}'-a_{m'}'|>2^{j(1+\varepsilon)}/10, \quad \text{and} \quad \sup_{a' \in \mathbb{R}^{d_1}} \inf_{n'}|a'-a_{n'}'| \leq 2^{j(1+\varepsilon)}/10 .
\end{align*}
For each $a_{n'}'$, we choose sequences $\{a_{n''}''\}_{n'' \in \mathbb{N}}$ such that for $n'' \neq m''$,
\begin{align*}
      |a_{n''}''-a_{m''}''|>[2^{j(1+\varepsilon)},|a_{n'}'|]/10 \quad  \text{and} \quad \sup_{a'' \in \mathbb{R}^{d_2}} \inf_{n''}|a''-a_{n''}''| \leq [2^{j(1+\varepsilon)},|a_{n'}'|]/10 .
\end{align*}
With the help of these sequences, define $\displaystyle{S_{n'}^{|\cdot|,j} := \Bar{B}^{|\cdot|}(a_{n'}', \tfrac{2^{j(1+\varepsilon)}}{10}) \setminus \cup_{m' < n'} \Bar{B}^{|\cdot|}(a_{m'}', \tfrac{2^{j(1+\varepsilon)}}{10})}$ and $\displaystyle{S_{n''}^{|\cdot|,j} := \Bar{B}^{|\cdot|}(a_{n''}'', \tfrac{[2^{j(1+\varepsilon)},|a_{n'}'|]}{10}) \setminus \cup_{m'' < n''} \Bar{B}^{|\cdot|}(a_{m''}'', \tfrac{[2^{j(1+\varepsilon)},|a_{n'}'|]}{10})}$. Similar to \eqref{Bounded overlapping property in j balls}, we also have the following bounded overlapping property:
\begin{align}
\label{bounded overlapping for mixed norms}
    \sup_{n'} \#\{m' : |a_{n'}'-a_{m'}'| \leq  2 \cdot 2^{j(1+\varepsilon)}\} \leq C\  \text{and} \  \sup_{n''} \#\{m'' : |a_{n''}''-a_{m''}''| \leq  2 \cdot 2^{j(1+\varepsilon)}\} \leq C .
\end{align}
Recalling \eqref{decomposition of kernel in mixed case}, we readily observe that
\begin{align*}
    &\supp{\mathcal{K}^{\alpha}_{j,A_1, A_1}} \subseteq \mathcal{D}_{j}^{|\cdot|} := \bigcup_{n' \in \mathbb{N}} \Big\{(x,y,z) \in (S_{n'}^{|\cdot|,j} \times \mathbb{R}^{d_2}) \times (\mathbb{R}^{d_1} \times \mathbb{R}^{d_2}) \times (\mathbb{R}^{d_1} \times \mathbb{R}^{d_2}) :  \\
    & \hspace{6cm} |x'-y'|\leq 2^{j(1+\varepsilon)}, |x''-y''| \leq 2 [2^{j(1+\varepsilon)},|a_{n'}'|], \\
    & \hspace{6.5cm} |x'-z'|\leq 2^{j(1+\varepsilon)}, |x''-z''| \leq 2 [2^{j(1+\varepsilon)},|a_{n'}'|] \Big\} ,
\end{align*}
which immediately implies
\begin{align*}
    \mathcal{D}_{j}^{|\cdot|} &\subseteq \bigcup_{\substack{n',n'',n_1',n_1'',,n_2',n_2'': \\ |a_{n'}'-a_{n_1'}'|\leq 2 \cdot 2^{j(1+\varepsilon)}, |a_{n''}''-a_{n_1''}''|\leq 4 \cdot [2^{j(1+\varepsilon)},|a_{n'}'|] \\
    |a_{n'}'-a_{n_2'}'|\leq 2 \cdot 2^{j(1+\varepsilon)}, |a_{n''}''-a_{n_2''}''|\leq 4 \cdot [2^{j(1+\varepsilon)},|a_{n'}'|]}} (S_{n'}^{|\cdot|,j} \times S_{n''}^{|\cdot|,j}) \times \left((S_{n_1'}^{|\cdot|,j} \times S_{n_1''}^{|\cdot|,j}) \times (S_{n_2'}^{|\cdot|,j} \times S_{n_2''}^{|\cdot|,j}) \right) .
\end{align*}
To this end, we introduce the short hand notation: for $i=1,2$,
\begin{align*}
    \sum_{n_i':} := \sum_{n_i':|a_{n'}'-a_{n_i'}'| \leq 2 \cdot 2^{j(1+\varepsilon)}} \quad \text{and} \quad \sum_{n_i'':} := \sum_{n_i'':|a_{n''}''-a_{n_i''}''| \leq 4 \cdot [2^{j(1+\varepsilon)},|a_{n'}'|]} .
\end{align*}
Therefore, we can decompose the operator $\mathcal{B}^{\alpha}_{j,A_{1}, A_{1}}$ as follows: 
\begin{align*}
    \mathcal{B}^{\alpha}_{j,A_{1}, A_{1}}(f,g)(x) &= \sum_{n',n''=0}^{\infty} \sum_{n_1':,n_1'':, n_2':,n_2'':} \chi_{S_{n'}^{|\cdot|,j} \times S_{n''}^{|\cdot|,j}}(x) \mathcal{B}^{\alpha}_{j,A_{1}, A_{1}}(f_{n_1', n_1''}^j,g_{n_2', n_2''}^j)(x) ,
\end{align*}
where $f_{n_1', n_1''}^j = f \chi_{S_{n_1'}^{|\cdot|,j} \times S_{n_1''}^{|\cdot|,j}}$ and $g_{n_2', n_2''}^j = g \chi_{S_{n_2'}^{|\cdot|,j} \times S_{n_2''}^{|\cdot|,j}}$.

Before moving forward, let us make the following claim. There exist some $\epsilon_1>0$, such that
\begin{align}
\label{Expression: Assumption for mixed norms}
    \|\chi_{S_{n'}^{|\cdot|,j} \times S_{n''}^{|\cdot|,j}} \mathcal{B}^{\alpha}_{j,A_{1}, A_{1}}(f_{n_1', n_1''}^j,g_{n_2', n_2''}^j)\|_{L_{x''}^{2/3} L_{x'}^{1}} & \leq C 2^{-j \alpha} 2^{j \epsilon_1} 2^{j/2} 2^{j d_2/2} \|f_{n_1', n_1''}^j\|_{L^{1}} \|g_{n_2', n_2''}^j\|_{L^{2}} .
\end{align}
Assuming the claim for a moment, we proceed to complete  the estimate of $\mathcal{B}^{\alpha}_{j,A_{1}, A_{1}}$. Disjointness of the sets $S_{n'}^{|\cdot|,j}$ and $S_{n''}^{|\cdot|,j}$ immediately gives
\begin{align*}
    & \|\mathcal{B}^{\alpha}_{j,A_{1}, A_{1}} (f,g)\|_{L_{x''}^{2/3} L_{x'}^{1}} = \Big\|\sum_{n',n''=0}^{\infty} \sum_{n_1':,n_1'':, n_2':,n_2'':} \chi_{S_{n'}^{|\cdot|,j} \times S_{n''}^{|\cdot|}} \mathcal{B}^{\alpha}_j (f_{n_1',n_1''}^j,g_{n_2',n_2''}^j) \Big\|_{L_{x''}^{2/3} L_{x'}^{1}} \\
    &= \sum_{n'=0}^{\infty} \Big\{\Big[ \int_{\mathbb{R}^{d_1}} \Big(\sum_{n''=0}^{\infty} \int_{\mathbb{R}^{d_2}} \Big| \sum_{n_1':,n_1'':, n_2':,n_2'':} \chi_{S_{n'}^{|\cdot|} \times S_{n''}^{|\cdot|}} \mathcal{B}^{\alpha}_{j,A_{1}, A_{1}} (f_{n_1',n_1''}^j,g_{n_2',n_2''}^j) \Big|^{2/3} \ dx'' \Big)^{3/2} \ dx' \Big]^{2/3} \Big\}^{3/2} .
\end{align*}
Applying \eqref{Triangle inequality for p less than 1 case} for the exponent $2/3$ followed by using triangle inequality for the exponent $3/2$, we can control the last quantity by
\begin{align*}
    & \sum_{n'=0}^{\infty} \Big\{\sum_{n''=0}^{\infty} \sum_{n_1':,n_1'':, n_2':,n_2'':} \Big[ \int_{\mathbb{R}^{d_1}} \Big( \int_{\mathbb{R}^{d_2}} \Big| \chi_{S_{n'}^{|\cdot|,j} \times S_{n''}^{|\cdot|}} \mathcal{B}^{\alpha}_{j,A_{1}, A_{1}} (f_{n_1',n_1''}^j,g_{n_2',n_2''}^j) \Big|^{2/3} \ dx'' \Big)^{3/2} \ dx' \Big]^{2/3} \Big\}^{3/2} .
\end{align*}
Consequently, using the claim \eqref{Expression: Assumption for mixed norms}, the above expression can be bounded by
\begin{align*}
    &  C 2^{-j \alpha} 2^{j \epsilon_1} 2^{j/2} 2^{j d_2/2} \sum_{n'=0}^{\infty} \Big\{\sum_{n''=0}^{\infty} \sum_{n_1':,n_1'':} \Big( \int_{\mathbb{R}^{d_1}} \int_{\mathbb{R}^{d_2}} | f_{n_1',n_1''}^j(y',y'')| \ dy''\ dy' \Big)^{2/3} \\
    & \hspace{7cm} \sum_{n_2':, n_2'':} \Big( \int_{\mathbb{R}^{d_1}} \int_{\mathbb{R}^{d_2}} | g_{n_2',n_2''}^j(z',z'')|^{2}  \ dz''\ dz' \Big)^{1/3} \Big\}^{3/2} .
\end{align*}

In addition, applying H\"older's inequality and bounded overlapping property \eqref{bounded overlapping for mixed norms}, the above expression can be further dominated by
\begin{align*}
    & C 2^{-j \alpha} 2^{j \epsilon_1} 2^{j/2} 2^{j d_2/2} \sum_{n'=0}^{\infty} \left\{\Big(\sum_{n''=0}^{\infty} \sum_{n_1':} \int_{\mathbb{R}^{d_1}} \sum_{n_1'':} \int_{\mathbb{R}^{d_2}} | f_{n_1',n_1''}^j(y',y'')| \ dy''\ dy' \Big)^{2/3} \right.  \\
    & \hspace{6cm} \left. \Big(\sum_{n''=0}^{\infty} \sum_{n_2':} \int_{\mathbb{R}^{d_1}} \sum_{n_2''} \int_{\mathbb{R}^{d_2}} | g_{n_2',n_2''}^j(z',z'')|^{2} \ dz'' \ dz' \Big)^{1/3} \right\}^{3/2} .
\end{align*}

Again, invoking bounded overlapping property \eqref{bounded overlapping for mixed norms}, we observe that the above quantity can be controlled by
\begin{align*}
    & C 2^{-j \alpha} 2^{j \epsilon_1} 2^{j/2} 2^{j d_2/2} 2^{j(1+\varepsilon) d_1/2} \Biggl\{\sum_{n'=0}^{\infty} \sum_{n_1':} \int_{S_{n_1'}^{|\cdot|,j}} \sum_{n''=0}^{\infty} \sum_{n_1'':} \int_{S_{n_1''}^{|\cdot|,j}} | f(y',y'')| \ dy''\ dy' \Biggr\}\\
    & \hspace{2cm} \sup_{n'} \Biggl\{\sum_{|a_{n'}'-a_{n_2'}'| \leq 2 \cdot 2^{j(1+\varepsilon)}} \sup_{z' \in B^{|\cdot|}(a_{n_2'}', 2^{j(1+\varepsilon)})} \Big( \sum_{n''=0}^{\infty} \sum_{n_2''} \int_{S_{n_2''}^{|\cdot|,j}} | g(z',z'')|^{2} \ dz'' \Big) \Biggr\}^{1/2} \\
    &\leq C 2^{-j \alpha} 2^{j \epsilon_1} 2^{j/2} 2^{j d_2/2} 2^{j(1+\varepsilon) d_1/2} \Big\{\int_{\mathbb{R}^{d_1}} \int_{\mathbb{R}^{d_2}} | f(y',y'')| \ dy''\ dy' \Big\} \sup_{z'} \Big\{ \int_{\mathbb{R}^{d_2}} | g(z',z'')|^{2} \ dz'' \Big\}^{1/2} \\
    &\leq C 2^{-j \delta} \|f\|_{L_{x''}^{1}L_{x'}^{1}} \|g\|_{L_{x''}^{2} L_{x'}^{\infty}} ,
\end{align*}
where as $\alpha>(d+1)/2$, we can choose $\varepsilon, \epsilon_1>0$ sufficiently small such that $\delta=\alpha-(d+1)/2+d_1 \varepsilon/2+\epsilon_1>0$. This completes the estimate of $\mathcal{B}^{\alpha}_{j,A_{1}, A_{1}}$, upon assuming the claim.

Therefore, it remains only to prove the claim \eqref{Expression: Assumption for mixed norms}. We divide the proof into two cases: $|a_{n'}'|>\frac{11}{5} 2^{j(1+\varepsilon)}$ and $|a_{n'}'|\leq \frac{11}{5} 2^{j(1+\varepsilon)}$. Let us start addressing the claim \eqref{Expression: Assumption for mixed norms} accordingly.

\subsection{Case-I : \texorpdfstring{$|a_{n'}'|>\frac{11}{5} 2^{j(1+\varepsilon)}$}{}}
Application of H\"older's inequality gives
\begin{align*}
    &\|\chi_{S_{n'}^{|\cdot|,j} \times S_{n''}^{|\cdot|,j}} \mathcal{B}^{\alpha}_{j,A_1, A_1}(f_{n_1', n_1''}^j, g_{n_2',n_2''}^j)\|_{L_{x''}^{2/3} L_{x'}^{1}} \lesssim 2^{j d_2(1+\varepsilon)/2} |a_{n'}'|^{d_2/2} \Big\|\sum_{l \in \mathbb{Z}} \phi_{j,l} ^{\alpha}(\mathcal{L}) f_{n_1', n_1''}^j \psi_{l}(\mathcal{L}) g_{n_2',n_2''}^j\Big\|_{L^1} \\
    &\lesssim \sum_{l \in \mathbb{Z}} 2^{j d_2(1+\varepsilon)/2} |a_{n'}'|^{d_2/2} \| \phi_{j,l}^{\alpha}(\mathcal{L})f_{n_1', n_1''}^j \|_{L^2} \| \psi_l(\mathcal{L})g_{n_2',n_2''}^j \|_{L^2} .
\end{align*}

We have $|a_{n'}' - a_{n_i'}'|\leq 2 \cdot 2^{j(1+\varepsilon)}$ for $i=1,2$. Then as in subsection \ref{subsection: case I of infinity infinity}, one can easily see that $|a_{n'}'| \sim |a_{n_i'}'|$. Thus, invoking Proposition \ref{Weighted restriction estimate}, we find that the above expression is dominated by
\begin{align*}
    & C \sum_{l \in \mathbb{Z}} 2^{j d_2(1+\varepsilon)/2} |a_{n'}'|^{d_2/2} |a_{n_1'}'|^{-d_2/2} \|\phi_{j,l}^{\alpha}\|_{L^{\infty}} \| f_{n_1', n_1''}^j \|_{L^1} \| g_{n_2',n_2''}^j \|_{L^2} \\
    &\leq C 2^{j\epsilon_1} 2^{-j \alpha} 2^{j d_2/2} \| f_{n_1', n_1''}^j \|_{L^1} \| g_{n_2',n_2''}^j \|_{L^2} ,
\end{align*}
for some $\epsilon_1>0$.

\subsection{Case-II : \texorpdfstring{$|a_{n'}'|\leq \frac{11}{5} 2^{j(1+\varepsilon)}$}{}}
Similar to (\ref{Decomposition of phi into M1 cutoff}), we first decompose
\begin{align*}
    & \chi_{S_{n'}^{|\cdot|,j} \times S_{n''}^{|\cdot|,j}}(x) \mathcal{B}^{\alpha}_{j,A_1, A_1}(f_{n_1', n_1''}^j, g_{n_2',n_2''}^j)(x) \\
    &= C \chi_{S_{n'}^{|\cdot|,j} \times S_{n''}^{|\cdot|,j}}(x) \Big(\sum_{M_1=0 }^{j} + \sum_{M_1=j+1 }^{\infty} \Big) \sum_{l \in \mathbb{Z}} \phi_{j,l,M_1}^{\alpha}(\mathcal{L}) f_{n_1', n_1''}^j(x) \psi_{l}(\mathcal{L}) g_{n_2',n_2''}^j(x) =: E_1 + E_2 .
\end{align*}

\subsubsection{\textbf{Estimate of} \texorpdfstring{$E_{1}$}{}{}}
As in \ref{subsection: Estimate of J1 case}, we decompose the set $S_{n_1''}^{|\cdot|,j}$ into disjoint subsets $S_{n_1'',m_1''}^{|\cdot|, M_1,j}$ such that $S_{n_1'',m_1''}^{|\cdot|, M_1,j} \subseteq B^{|\cdot|}(b_{n_1'', m_1''}^{M_1}, 2^{(j+M_1)(1+\varepsilon)}/5)$, and for any $m_1'' \neq \Tilde{m}_1''$, we have the $|b_{n_1'', m_1''}^{M_1}-b_{n_1'', \Tilde{m}_1''}^{M_1}|>2^{(j+M_1)(1+\varepsilon)}/10$. For $\gamma>0$, we define $\Tilde{S}_{n_1'',m_1''}^{|\cdot|, M_1,j} := B^{|\cdot|}(b_{n_1'', m_1''}^{M_1}, 2^{(j+M_1)(1+\varepsilon)} 2^{\gamma j+1}/5)$, and decompose the function $f_{n_1', n_1''}^j$ as follows $f_{n_1', n_1''}^j = \sum_{m_1''=1}^{N_{M_1}} f_{n_1', n_1'',m_1''}^{M_1,j}$, where $f_{n_1', n_1'',m_1''}^{M_1,j} = f_{n_1', n_1''}^j \chi_{S_{n_1'',m_1''}^{|\cdot|, M_1,j}}$. Accordingly, we decompose $E_1$ into two parts as follows.
\begin{align*}
    & E_{1} = \sum_{M_1=0 }^{j} \sum_{m_1''=1}^{N_{M_1}} \sum_{l\in \mathbb{Z}} \chi_{S_{n'}^{|\cdot|,j} \times S_{n''}^{|\cdot|,j}}(x) \chi_{S_{n'}^{|\cdot|,j} \times \widetilde{S}_{n_1'',m_1''}^{|\cdot|, M_1,j}}(x) \phi_{j,l,M_1}^{\alpha}(\mathcal{L}, T) f_{n_1', n_1'',m_1''}^{M_1,j}(x)\, \psi_{l}(\mathcal{L}) g_{n_2',n_2''}^j(x) \\
    &+ \sum_{M_1=0 }^{j}  \sum_{m_1''=1}^{N_{M_1}} \sum_{l\in \mathbb{Z}} \chi_{S_{n'}^{|\cdot|,j} \times S_{n''}^{|\cdot|,j}}(x) (1-\chi_{S_{n'}^{|\cdot|,j} \times \widetilde{S}_{n_1'',m_1''}^{|\cdot|, M_1,j}})(x) \phi_{j,l,M_1}^{\alpha}(\mathcal{L}, T) f_{n_1', n_1'',m_1''}^{M_1,j}(x)\, \psi_{l}(\mathcal{L}) g_{n_2',n_2''}^j(x) \\
    &=: E_{11} + E_{12} .
\end{align*}

\subsubsection{\textbf{Estimate of} \texorpdfstring{$E_{12}$}{}{}}
Note that $E_{12}$ can be estimated in a similar manner to $J_{12}$ (see subsection \ref{Estimate of J12 for 1,2 case}), with the appropriate modifications.

\subsubsection{\textbf{Estimate of} \texorpdfstring{$E_{11}$}{}{}}
In this case, we have
\begin{align*}
    E_{11} &= C\sum_{M_1=0 }^{j} \sum_{l\in \mathbb{Z}} \Big\{ \sum_{m_1''=1}^{N_{M_1}} \chi_{S_{n'}^{|\cdot|,j} \times \widetilde{S}_{n_1'',m_1''}^{|\cdot|, M_1,j}}(x) \phi_{j,l,M_1}^{\alpha}(\mathcal{L}, T) f_{n_1', n_1'',m_1''}^{M_1,j}(x) \Big\} \{ \psi_{l}(\mathcal{L}) g_{n_2',n_2''}^j(x) \} .
\end{align*}
Applying the fact \eqref{Triangle inequality for p less than 1 case} and triangle inequality, we can see
\begin{align*}
    \|E_{11}\|_{L_{x''}^{2/3} L_{x'}^1}^{2/3} &\leq C\sum_{M_1=0 }^{j} \sum_{l \in \mathbb{Z}} \Big[\int_{\mathbb{R}^{d_1}} \Big( \int_{\mathbb{R}^{d_2}} \Big| \sum_{m_1''=1}^{N_{M_1}} \chi_{S_{n'}^{|\cdot|,j} \times \widetilde{S}_{n_1'',m_1''}^{|\cdot|, M_1,j}}(x) \phi_{j,l,M_1}^{\alpha}(\mathcal{L}, T) f_{n_1', n_1'',m_1''}^{M_1,j}(x) \\
    &\hspace{6cm} \psi_{l}(\mathcal{L}) g_{n_2',n_2''}^j(x) \Big|^{2/3} dx'' \Big)^{3/2} dx' \Big]^{2/3} .
\end{align*}
Now, by initially applying H\"older's inequality with respect to $x''$-variable, and subsequently with respect to $x'$-variable, the right hand side of the above expression is dominated by
\begin{align*}
      & C\sum_{M_1=0 }^{j} \sum_{l \in \mathbb{Z}} \Big[\int_{\mathbb{R}^{d_1}} \Big(\int_{\mathbb{R}^{d_2}} \Big| \sum_{m_1''=1}^{N_{M_1}} \chi_{S_{n'}^{|\cdot|,j} \times \widetilde{S}_{n_1'',m_1''}^{|\cdot|, M_1,j}}(x) \phi_{j,l,M_1}^{\alpha}(\mathcal{L}, T) f_{n_1', n_1'',m_1''}^{M_1,j}(x) \Big| dx'' \Big)^{2} dx' \Big]^{\frac{1}{2} \cdot \frac{2}{3}} \\
      & \hspace{6cm} \Big[\int_{\mathbb{R}^{d_1}} \int_{\mathbb{R}^{d_2}} | \psi_{l}(\mathcal{L}) g_{n_2',n_2''}^j(x) |^2 dx'' dx' \Big]^{\frac{1}{2} \cdot \frac{2}{3}} 
\end{align*}
Note that $|\widetilde{S}_{n_1'',m_1''}^{|\cdot|, M_1,j}| \lesssim 2^{(j+M_1)(1+\varepsilon) d_2/2} 2^{C \gamma j}$. Application of triangle inequality along with H\"older's inequality, we see that the above quantity is further bounded by
\begin{align*}
    & C \sum_{M_1=0 }^{j} \sum_{l \in \mathbb{Z}} \Big[\sum_{m_1=1}^{N_{M_1}} 2^{(j+M_1)(1+\varepsilon) d_2/2} 2^{C \gamma j} \| \phi_{j,l,M_1}^{\alpha}(\mathcal{L}, T) f_{n_1', n_1'',m_1''}^{M_1,j} \|_{L^2} \Big]^{2/3} \| \psi_{l}(\mathcal{L}) g_{n_2',n_2''}^j \|_{L^2}^{2/3} .
\end{align*}
With the help of the above estimate, together with Proposition \ref{Lemma: Martini_Mullar_Weighted_Plancherel} and \eqref{sum of l for 1,2 case}, we can conclude that
\begin{align*}
     \|E_{11}\|_{L_{x''}^{2/3} L_{x'}^1}^{2/3} &\lesssim \sum_{M_1=0 }^{j} \sum_{l \in \mathbb{Z}} \Big[2^{(j+M_1) d_2/2} 2^{C \gamma j} 2^{-M_1 d_2/2} \|\phi_{j,l}^{\alpha}\|_{L^{\infty}} \sum_{m_1=1}^{N_{M_1}} \| f_{n_1', n_1'',m_1''}^{M_1,j} \|_{L^1} \Big]^{2/3} \| g_{n_2',n_2''}^j \|_{L^2}^{2/3} \\
     &\leq C 2^{j \epsilon_1} 2^{-j 2\alpha /3} 2^{j/3} 2^{j d_2/3} \| f_{n_1', n_1''}^{j} \|_{L^1}^{2/3} \| g_{n_2',n_2''}^j \|_{L^2}^{2/3} ,
\end{align*}
for some $\epsilon_1>0$.

Thus, we arrive at
\begin{align*}
    \|E_{11}\|_{L_{x''}^{2/3} L_{x'}^1} &\leq C 2^{j \epsilon_1} 2^{-j \alpha} 2^{j/2} 2^{j d_2/2} \| f_{n_1',n_1''}^j \|_{L^1} \| g_{n_2',n_2''}^j \|_{L^2} .
\end{align*}

\subsubsection{\textbf{Estimate of} \texorpdfstring{$E_{2}$}{}{}}
This estimate resembles the estimate for $J_2$ (see \ref{Estimate of J2 for 1,2 case}). Utilizing H\"older's inequality, Proposition \ref{Lemma: Martini_Mullar_Weighted_Plancherel}, and the fact from \eqref{Convergence of sum of l using phi} leads to
\begin{align*}
    \|E_2\|_{L_{x''}^{2/3} L_{x'}^{1}} & \leq C 2^{j 2d_2(1+\varepsilon)/2} \Big\|\sum_{l \in \mathbb{Z}} \sum_{M_1=j+ 1 }^{\infty} \phi_{j,l,M_1} ^{\alpha}(\mathcal{L}, T) f_{n_1', n_1''}^j \, \psi_{l}(\mathcal{L}) g_{n_2',n_2''}^j\Big\|_{L^1} \\
    &\leq C \sum_{l \in \mathbb{Z}} \sum_{M_1=j+ 1 }^{\infty} 2^{j d_2(1+\varepsilon)} \| \phi_{j,l, M_1}^{\alpha}(\mathcal{L}, T)f_{n_1', n_1''}^j \|_{L^2} \| \psi_l(\mathcal{L})g_{n_2',n_2''}^j \|_{L^2} \\
    &\leq C \sum_{l \in \mathbb{Z}} \sum_{M_1=j+ 1 }^{\infty} 2^{j d_2(1+\varepsilon)} 2^{-M_1 d_2/2} \|\phi_{j,l}^{\alpha}\|_{L^{\infty}} \| f_{n_1', n_1''}^j \|_{L^1} \| g_{n_2',n_2''}^j \|_{L^2} \\
    &\leq C 2^{j\epsilon_1} 2^{-j \alpha} 2^{j d_2/2} \| f_{n_1', n_1''}^j \|_{L^1} \| g_{n_2',n_2''}^j \|_{L^2} ,
\end{align*}
for some $\epsilon_1>0$.

This completes the proof the claim \eqref{Expression: Assumption for mixed norms}.

\section*{Acknowledgments}
The second author gratefully acknowledges the financial support provided by the NBHM Post-doctoral fellowship, DAE, Government of India. The third author would like to acknowledge the support of the Prime Minister's Research Fellows (PMRF) supported by Ministry of Education, Government of India.


\providecommand{\bysame}{\leavevmode\hbox to3em{\hrulefill}\thinspace}
\providecommand{\MR}{\relax\ifhmode\unskip\space\fi MR }
\providecommand{\MRhref}[2]{%
  \href{http://www.ams.org/mathscinet-getitem?mr=#1}{#2}
}
\providecommand{\href}[2]{#2}

\end{document}